 \definecolor{skyblue}{rgb}{0.85,0.85,1}
\newtheorem{lemma}{Lemma}
\newtheorem{prop}{Proposition}
\newtheorem{theorem}{Theorem}
\newtheorem{cor}{Corollary}
\newtheorem{rem}{Remark}
\DeclareMathOperator{\tr}{Tr}
\DeclareMathOperator{\dist}{dist}
\DeclareMathOperator{\Mor}{Mor}
\DeclareMathOperator{\Mas}{Mas}
\newcommand{\bbR}{\mathbb{R}}
\newcommand{\bbS}{\mathbb{S}}
\newcommand{\bbT}{\mathbb{T}}
\newcommand{\cJ}{\mathcal{J}}
\newcommand{\cH}{\mathcal{H}}
\newcommand{\cD}{\mathcal{D}}
\newcommand{\cF}{\mathcal{F}}
\newcommand{\Hh}{H^{1/2}(\Sigma) \oplus H^{-1/2}(\Sigma)}
\newcommand{\Hp}{H^{1/2}(\Sigma)}
\newcommand{\Hm}{H^{-1/2}(\Sigma)}
\newcommand{\pO}{\partial \Omega}
\newcommand{\pM}{\partial M}
\newcommand{\ra}{\rightarrow}
\newcommand{\lra}{\longrightarrow}
\newcommand{\p}{\partial}
\newcommand{\tL}{\widetilde{\Lambda}}
\newcommand*{\longhookrightarrow}{\ensuremath{\lhook\joinrel\relbar\joinrel\rightarrow}}
\begin{document}

\title{Manifold decompositions and indices of Schr\"{o}dinger operators}

\author{Graham Cox}\email{ghc5046@psu.edu}\address{Penn State University Mathematics Dept., University Park, State College, PA 16802}
\author{Christoper K.R.T. Jones}\email{ckrtj@email.unc.edu}
\author{Jeremy L. Marzuola}\email{marzuola@email.unc.edu}\address{Department of Mathematics, UNC Chapel Hill, Phillips Hall CB \#3250, Chapel Hill, NC 27599}

\keywords{Schr\"odinger operator, manifold decomposition, Morse index, Maslov index, Dirichlet-to-Neumann map, nodal domain}
\subjclass{Primary: 35J10, 35P15, 58J50, 35J25;  Secondary: 53D12, 35B05, 35B35}

\begin{abstract}
The Maslov index is used to compute the spectra of different boundary value problems for Schr\"{o}dinger operators on compact manifolds. The main result is a spectral decomposition formula for a manifold $M$ divided into components $\Omega_1$ and $\Omega_2$ by a separating hypersurface $\Sigma$. A homotopy argument relates the spectrum of a second-order elliptic operator on $M$ to its Dirichlet and Neumann spectra on $\Omega_1$ and $\Omega_2$, with the difference given by the Maslov index of a path of Lagrangian subspaces. This Maslov index can be expressed in terms of the Morse indices of the Dirichlet-to-Neumann maps on $\Sigma$. Applications are given to doubling constructions, periodic boundary conditions and the counting of nodal domains. In particular, a new proof of Courant's nodal domain theorem is given, with an explicit formula for the nodal deficiency.
\end{abstract}

\maketitle

\section{Introduction}
Suppose $M$ is a compact, orientable manifold, and $L$ a selfadjoint, elliptic operator on $M$. It is of great interest to compute the spectrum of $L$, given boundary conditions on $\p M$, and relate it to the underlying geometry of $M$ and $L$. Of particular importance for many applications is the \emph{Morse index}, or number of negative eigenvalues. One approach to computing the Morse index is to deform $M$ through a one-parameter family of domains $\{\Omega_t\}$ and keep track of eigenvalues passing through $0$ as $t$ varies. For instance, if $f\colon M \to [0,1]$ is a Morse function on $M$ with $f^{-1}(1) = \p M$, one can consider the sublevel sets $\Omega_t = f^{-1}[0,t)$ for $t \in (0,1]$. Since
\[
	\lim_{t\ra0} \operatorname{Vol}(\Omega_t) = 0,
\]
it is easy to compute the Morse index of $L$ on $\Omega_t$ once $t$ is sufficiently small. It thus remains to describe the ``spectral flow" of the boundary value problems as $t$ varies.

This was done by Smale in \cite{S65} for the Dirichlet problem, assuming the $\Omega_t$ remain diffeomorphic for all $t$, which is the case when $f$ has no critical values in $(0,1]$. An application of this result to the study of minimal surfaces was given by Simons in \cite{S68}, and a generalization was given by Uhlenbeck in \cite{U73}, allowing the topology of $\Omega_t$ to change but still assuming Dirichlet boundary conditions. In \cite{DJ11} Deng and Jones used the Maslov index, a symplectic invariant, to generalize Smale's result to more general boundary conditions, but with the additional requirement that the domain be star-shaped. The star-shaped restriction was subsequently removed in \cite{CJM14}, where the Maslov index was used to compute the spectral flow for any smooth one-parameter family of domains, with quite general boundary conditions.

To complete the picture, we must describe what happens when $t$ passes through a critical value of $f$ and the topology of $\Omega_t$ changes. More generally, we consider a decomposition of $M$ into disjoint components along a separating hypersurface $\Sigma$, as in Figure \ref{decomposition}, and ask how the spectrum on $M$ relates to the spectrum on each component. This question is answered in Theorem \ref{MorMas}, which says the Morse index of $L$ on $M$ equals the sum of the Morse indices on each component (with appropriate boundary conditions on $\Sigma$) plus a ``topological contribution," which is given by the Maslov index of a path of Lagrangian subspaces in the symplectic Hilbert space $\Hp \oplus \Hm \oplus \Hp \oplus \Hm$.

By considering the limit in which either $\Omega_1$ or $\Omega_2$ is small in some appropriate sense, this can be related to classic results on eigenvalues of the Laplacian with respect to singular perturbations of the underlying spatial domain. In \cite{CF78} Chavel and Feldman studied the effect of removing a tubular neighborhood of a closed submanifold $N \subset M$, replacing $M$ by the domain
\[
	M_\epsilon = \{x \in M : \dist(x,N) > \epsilon\}.
\]
Assuming the codimension of $N$ is at least 2, they proved convergence of the Dirichlet spectrum on $M_\epsilon$ to the spectrum on $M$ as $\epsilon \ra 0$. A similar analysis was carried out in \cite{CF81} for manifolds to which a small handle has been attached, with a sufficient condition given, in terms of an isoperimetric constant, for convergence of the spectrum as the size of the handle decreases to zero. In \cite{RT75} Rauch and Taylor considered a rather weak notion of convergence for Euclidean domains and described the behavior of the Laplacian when one removes a small neighborhood of a polar set. (The definition of a polar set can be found in \cite{RT75}; note in particular that a submanifold of codimension at least 2 is a polar set, whereas a hypersurface is not.) In \cite{J89} Jimbo considered the case of two disjoint bounded domains connected by a small tube, and gave asymptotic formulas for the eigenvalues and eigenfunctions in the singular limit as the tube shrinks to a line.

Several other authors have considered the reduction of spectral flows (and other analytic invariants) through similar manifold decompositions \cite{CLM96,N95,Y91}. These results are all for first-order, Dirac-type elliptic operators which have a particular form in a collar neighborhood of the separating hypersurface $\Sigma$.

Our symplectic approach to this problem has many applications, which we explore in the last section of the paper. The first is a new proof of Courant's nodal domain theorem, with an explicit formula for the nodal deficiency. Then we compute the Morse indices of operators on ``almost-doubled" manifolds, which consist of two identical (or almost identical) components glued together along a common boundary. We also use the Maslov index to give a new proof of a well-known theorem relating the Dirichlet and Neumann counting functions to the spectrum of the Dirichlet-to-Neumann map. Finally, we relate the spectra of Schr\"odinger operators on the torus---viewed as a cube with opposing faces identified---to the spectra on the cube with Dirichlet boundary conditions, and find that the periodic and Dirichlet Morse indices are related by a kind of symmetrized Dirichlet-to-Neumann map.

\subsection*{Structure of the paper}
In Section \ref{secResults} we define the relevant operators and domains, and state the main results of the paper. The fundamental relation between Morse and Maslov indices is proved in Section \ref{secProof}. In Section \ref{secCrossing} we study the Maslov index in more detail, and relate it to the Dirichlet-to-Neumann maps of the manifold decomposition. Finally, in Section \ref{secApp} these results are applied to a variety of geometric scenarios.

\section*{Acknowledgments} The authors wish to thank Chris Judge, Rafe Mazzeo, Michael Taylor and Gunther Uhlmann for very helpful conversations during the preparation of this manuscript.
JLM was supported in part by U.S. NSF DMS-1312874 and NSF CAREER Grant DMS-1352353.  GC and CKRTJ were supported by U.S. NSF Grant DMS-1312906.


\section{Definitions and results}\label{secResults}
Throughout we assume that $M$ is a compact, orientable manifold with Lipschitz boundary $\pM$, and $\Sigma \subset M$ is an embedded Lipschitz hypersurface that separates $M$ into two disjoint (but not necessarily connected) components: $M \setminus \Sigma = \Omega_1 \cup \Omega_2$. We further assume that $\Sigma \cap \pM = \varnothing$. A typical situation is shown in Figure \ref{decomposition}.

\begin{center}
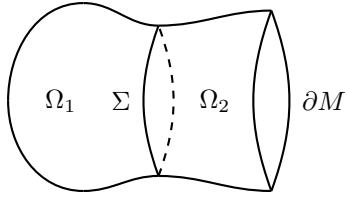
\begin{figure}
\begin{tikzpicture}
	\draw[thick] (0,1) to [out=270, in=180] (1,-0.2); 
	\draw[thick] (1,-0.2) to [out=0, in=180] (2,0); 
	\draw[thick] (0,1) to [out=90, in=180] (1,2.3); 
	\draw[thick] (1,2.3) to [out=0, in=180] (2,2); 
	\draw[thick] (2,2) to [out=0, in=180] (3.5,2.2); 
	\draw[thick] (2,0) to [out=0, in=180] (3.5,-0.2); 
	\draw[thick] (2,0) to [out=110, in=250] (2,2); 
	\draw[thick, dashed] (2,0) to [out=70, in=290] (2,2); 
	\draw[thick] (3.5,-0.2) to [out=110, in=250] (3.5,2.2); 
	\draw[thick] (3.5,-0.2) to [out=70, in=290] (3.5,2.2); 
	\node at (0.7,1) {$\Omega_1$}; 
	\node at (1.5,1) {$\Sigma$};
	\node at (2.75,1) {$\Omega_2$};
	\node at (4.2,1) {$\p M$};
\end{tikzpicture}
\caption{A manifold $M$ with nontrivial boundary $\pM$, separated into components $\Omega_1$ and $\Omega_2$ by an orientable hypersurface $\Sigma$.}
\label{decomposition}
\end{figure}
\end{center}

Let $g$ be a Riemannian metric on $M$ and $V$ a real-valued function, both of class $L^\infty$. We define the formal differential operator
\begin{align}\label{Ldef}
	L = -\Delta_g + V,
\end{align}
where $\Delta_g$ is the Laplace--Beltrami operator of $g$. (This is a formal operator in the sense that its domain has not been specified; we will allow $L$ to act on functions on $\Omega_1$, $\Omega_2$ and $M$.)  

Fix $i \in\{1,2\}$ and suppose $u \in H^1(\Omega_i)$ and $\Delta_g u \in L^2(\Omega_i)$. It follows from Theorem 3.37 and Lemma 4.3 in \cite{M00} that 
\begin{align}\label{trace}
	\left.u\right|_{\pO_i} \in H^{1/2}(\pO_i), \quad \left.\frac{\p u}{\p \nu_i}\right|_{\pO_i} \in H^{-1/2}(\pO_i)
\end{align}
and so the following weak version of Green's first identity
\begin{align}\label{Green}
	\int_{\Omega_i}  \left<\nabla u, \nabla v\right> = -\int_{\Omega_i} (\Delta_g u)v + \int_{\pO_i} v \frac{\p u}{\p \nu_i}
\end{align}
holds for any $v \in H^1(\Omega_i)$, where $\nu_i$ denotes the outward unit normal to $\Omega_i$. Note that $\pO_i$ is the disjoint union $\Sigma \cup (\pO_i \cap \pM)$ and $\nu_1 = -\nu_2$ on $\Sigma$.

We suppose that either Dirichlet or Neumann boundary conditions are prescribed on each connected component of $\p M$, and correspondingly write $\p M = \Sigma_D \cup \Sigma_N$. (Thus $\Sigma_D$ and $\Sigma_N$ are closed, disjoint Lipschitz hypersurfaces which need not be connected.) 

For $i \in \{1,2\}$ we let $L^D_i$ and $L^N_i$ denote the Dirichlet and Neumann realizations of $L$ on $\Omega_i$, respectively. These are unbounded, selfadjoint operators on $L^2(\Omega_i)$, with domains
\begin{align}
	\cD(L^D_i) &= \left\{u \in H^1(\Omega_i) : \Delta_g u \in L^2(\Omega_i), \left. u \right|_{\Sigma \cup (\Sigma_D \cap \pO_i)} =0 \text{ and } \left.\frac{\p u}{\p \nu_i}\right|_{\Sigma_N \cap \pO_i} = 0 \right\} \label{LDdef} \\
	\cD(L^N_i) &= \left\{u \in H^1(\Omega_i) : \Delta_g u \in L^2(\Omega_i), \left. u \right|_{\Sigma_D \cap \pO_i} = 0 \text{ and } \left.\frac{\p u}{\p \nu_i}\right|_{\Sigma \cup (\Sigma_N \cap \pO_i)} = 0 \right\}.\label{LNdef} 
\end{align}
The operators $L^D_i$ and $L^N_i$ have the same boundary conditions on the ``outer boundary" $\p M \cap \pO_i$; the superscript refers only to the conditions imposed on the ``inner boundary" $\Sigma$. We let $L^G$ denote the ``global" realization of $L$ on $M$. This is an unbounded, selfadjoint operator on $L^2(M)$, with domain
\begin{align}
	\cD(L^G) = \left\{u \in H^1(M) : \Delta_g u \in L^2(M), \left.u\right|_{\Sigma_D \cap \p\Omega_i} = 0 
	\text{ and } \left.\frac{\p u}{\p \nu}\right|_{\Sigma_N \cap \p\Omega_i} = 0 \right\}.\label{LGdef} 
\end{align}

Each of the operators $L^G$, $L^D_i$ and $L^N_i$ is bounded below and selfadjoint with compact resolvent, and therefore has a well-defined \emph{Morse index} (number of negative eigenvalues, counting multiplicity), which we denote $\Mor(\cdot)$. We additionally let 
\begin{equation}
\label{Mor0}
\Mor_0 = \Mor + \dim\ker
\end{equation} 
denote the number of nonpositive eigenvalues.

Our main result relates the Morse index of $L^G$ to the Morse indices of $L^D_i$ and $L^N_i$. We compare these quantities by encoding the boundary conditions on $\Sigma$ in a Lagrangian subspace, which is then rotated between global boundary conditions, corresponding to $L^G$, and decoupled boundary conditions, corresponding to $L^N_1$ and $L^D_2$; see \eqref{betadef} below. The difference in Morse indices is equated to a symplectic winding number---the Maslov index---for the rotating path of boundary conditions. The relevant technical properties of the Maslov index are summarized in Appendix B of \cite{CJM14}; a more complete presentation can be found in \cite{BF98} or \cite{F04}. Some applications of the Maslov index to boundary value problems for PDE can be found in \cite{CJM14,CJLS14,DJ11,LSS14,PWindex,S78II}.

Consider the Hilbert space $\cH = \Hh$, with the symplectic form $\omega$ induced by the bilinear pairing of $H^{1/2}(\Sigma)$ with $(H^{1/2}(\Sigma))^* = H^{-1/2}(\Sigma)$, that is
\[
	\omega((x,\phi),(y,\psi)) = \psi(x) - \phi(y)
\]
for $x,y \in \Hp$ and $\phi,\psi \in \Hm$. To study the decomposition of $M$ by $\Sigma$ we use the doubled space 
\begin{equation}
\label{Hbpdef}
\cH_\boxplus := \cH \oplus \cH,
\end{equation} 
with the symplectic form $\omega_\boxplus := \omega \oplus(-\omega)$. The negative sign on the second component is chosen so the diagonal subspace $\{(x,\phi,x,\phi) : x \in \Hp, \phi \in \Hm\}$ is Lagrangian.

For each $i \in \{1,2\}$ and $\lambda \in \bbR$ we define
\begin{align}\label{Kdef}
	K_i^\lambda = \left\{ u \in H^1(\Omega_i) : Lu = \lambda u, \left.u\right|_{\Sigma_D \cap \p \Omega_i} = 0 \text{ and } \left.\frac{\p u}{\p \nu_i} \right|_{\Sigma_N \cap \Omega_i} = 0 \right\},
\end{align}
with the equality $Lu = \lambda u$ meant in the distributional sense. Thus $K_i^\lambda$ is the space of weak $H^1(\Omega_i)$ solutions to $Lu = \lambda u$ that satisfy the given boundary conditions on $\p M \cap \p \Omega_i$ but have no conditions imposed on $\Sigma$. We then define the space of two-sided Cauchy data on the separating hypersurface $\Sigma$ by
\begin{align}\label{mudef}
	\mu(\lambda) = \left\{ \left.\left(u_1,\frac{\p u_1}{\p \nu_1},u_2,-\frac{\p u_2}{\p \nu_2} \right)\right|_\Sigma : u_i \in K_i^\lambda \right\}.
\end{align}

We also consider the one-parameter family of boundary conditions on $\Sigma$, given by
\begin{align}\label{betadef}
	\beta(t) = \{(x,t\phi,tx,\phi) : x \in \Hp, \phi \in \Hm \}
\end{align}
for $t\in[0,1]$. It will be shown that $\mu(\lambda)$ and $\beta(t)$ comprise smooth families of Lagrangian subspaces in $\cH_\boxplus$, and form a Fredholm pair for every $(\lambda,t) \in \bbR \times [0,1]$, so one can define the \textit{Maslov index} of $\beta(t)$ with respect to $\mu(\lambda_0)$ for any fixed $\lambda_0$. This is a homotopy invariant quantity that counts the intersections of the subspaces $\beta(t)$ and $\mu(\lambda_0)$, with sign and multiplicity, as $t$ increases from 0 to 1.

We are now ready to state the main result of the paper.

\begin{theorem}\label{MorMas}
Let $(M,g)$ be a Riemannian manifold with Lipschitz boundary $\p M$ and a Lipschitz separating hypersurface $\Sigma$. The operators $L^D_i$, $L^N_i$ and $L^G$ defined in \eqref{LDdef}, \eqref{LNdef} and \eqref{LGdef} satisfy
\begin{align}\label{GDNequality}
	\Mor(L^G) = \Mor(L^N_1) + \Mor(L^D_2) + \Mas(\beta(t); \mu(0)).
\end{align}
\end{theorem}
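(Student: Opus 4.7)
The strategy is a two-parameter homotopy in the rectangle $R = [0,1]_t \times [-\Lambda, 0]_\lambda$ for $\Lambda > 0$ sufficiently large, using homotopy invariance and additivity of the Maslov index. The starting observation is that $\beta(0) \cap \mu(\lambda) \neq \{0\}$ iff $\lambda \in \sigma(L^N_1 \oplus L^D_2)$ --- since $\beta(0)$ enforces $\p u_1/\p\nu_1=0$ and $u_2=0$ on $\Sigma$ --- while $\beta(1) \cap \mu(\lambda) \neq \{0\}$ iff $\lambda \in \sigma(L^G)$, because $\beta(1)$ is the diagonal and thus enforces continuity and derivative matching across $\Sigma$. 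Hence the Maslov indices along the two vertical sides of $R$ count precisely the Morse indices appearing in \eqref{GDNequality}.

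Before running the homotopy I would verify the symplectic structure. The isotropy of $\mu(\lambda)$ follows from summing \eqref{Green} over $\Omega_1$ and $\Omega_2$: for $u_i, v_i \in K_i^\lambda$ the boundary integrals on $\pM$ vanish due to the conditions in \eqref{Kdef}, leaving only contributions from $\Sigma$ which pair up via $\omega_\boxplus$ to give zero (the sign $-\p u_2/\p\nu_2$ in \eqref{mudef} is precisely what accommodates the $-\omega$ on the second factor of $\cH_\boxplus$). Maximality follows from the solvability of the Dirichlet problem for $L-\lambda$ on each $\Omega_i$, which gives enough elements of $K_i^\lambda$ to match any prescribed trace. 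The Lagrangian property of $\beta(t)$ is algebraic. The Fredholm pair property for every $(t,\lambda) \in [0,1] \times \bbR$ reduces to Fredholmness of the associated mixed boundary value problems; this is the most delicate technical ingredient.

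The homotopy argument then proceeds in a standard way. Traversing $\p R$ counterclockwise yields, by homotopy invariance,
\[
	M_{\mathrm{bot}} + M_{\mathrm{right}} + M_{\mathrm{top}} + M_{\mathrm{left}} = 0,
\]
where each $M_\ast$ is the Maslov index along one side. On the bottom ($\lambda = -\Lambda$), if $(u_1, u_2)$ has Cauchy data in $\mu(-\Lambda) \cap \beta(t)$ then Green's identity summed over $\Omega_i$, combined with the algebraic form of $\beta(t)$, gives $\sum_i \int_{\Omega_i} (|\nabla u_i|^2 + (V+\Lambda)|u_i|^2) = 0$; this forces $u_i \equiv 0$ once $\Lambda > -\inf V$, so $M_{\mathrm{bot}} = 0$. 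On the right side ($t=1$, $\lambda$ increasing), crossings correspond to eigenvalues of $L^G$; differentiating $Lu_\lambda = \lambda u_\lambda$ in $\lambda$ produces a crossing form which is monotone (positive definite on the crossing subspace), so $M_{\mathrm{right}} = \Mor(L^G)$. The left side ($t=0$, $\lambda$ decreasing) gives $M_{\mathrm{left}} = -\Mor(L^N_1) - \Mor(L^D_2)$ by the same monotonicity argument, and the top ($t$ from $1$ down to $0$) gives $M_{\mathrm{top}} = -\Mas(\beta(t); \mu(0))$. Solving for the remaining term yields \eqref{GDNequality}.

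The main obstacles are: (i) establishing the Fredholm pair property uniformly in $(t, \lambda)$, which requires careful control of the trace maps $u_i \mapsto (u_i, \p u_i/\p\nu_i)|_\Sigma$ and of how $K_i^\lambda$ depends on $\lambda$; (ii) handling possibly degenerate crossings at the corners $(0, 0)$ and $(1, 0)$ of $R$, where $0$ may simultaneously be an eigenvalue of $L^G$ and of $L^N_1 \oplus L^D_2$, typically dealt with by a small $\lambda$-perturbation or by careful use of the $\Mor$ versus $\Mor_0$ convention from \eqref{Mor0}; and (iii) verifying monotonicity of the crossing form on the vertical sides so that signed crossings equal the algebraic multiplicities. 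Item (i) is the true technical core; items (ii) and (iii) follow from self-adjointness together with a direct computation of the crossing form.
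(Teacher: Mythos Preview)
Your proposal is correct and follows essentially the same route as the paper: a homotopy on the rectangle $[\lambda_\infty,0]\times[0,1]$, with the bottom side trivial by the energy identity (the paper's Lemma~\ref{uniformbound}), the vertical sides computed via monotonicity of the $\lambda$-crossing form (Proposition~\ref{lambdaMaslov}), and the top side contributing the Maslov correction term. The only cosmetic difference is that the paper pulls $\beta(t)$ back to $\beta(0)$ by a continuous unitary family $U(t)$ so as to run the homotopy inside a single Fredholm--Lagrangian Grassmannian $\cF\Lambda_{\beta(0)}(\cH_\boxplus)$; note also that in the paper's convention the $\lambda$-crossing form on $\mu(\lambda)$ is \emph{negative} definite (giving $\Mas(\mu(\lambda);\beta(1))=-\Mor(L^G)$), so you should double-check your claimed positivity against whichever sign convention you fix for $\omega_\boxplus$ and the crossing form.
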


In practice we can view this as a tool for computing the spectrum of $L^G$ by decomposing $M$ into two simpler pieces, $\Omega_1$ and $\Omega_2$. The relation between these spectral problems is given by the index $\Mas(\beta(t); \mu(0))$, so it remains to understand this term. We take several approaches to this. The first is to use spectral properties of the Dirichlet-to-Neumann maps for $\Omega_1$ and $\Omega_2$, denoted $\Lambda_1$ and $\Lambda_2$, respectively, to compute the number of positive and negative intersections the path $\beta(t)$ has with the fixed subspace $\mu(0)$.

\begin{theorem}\label{DNcrossing}
If the hypotheses of Theorem \ref{MorMas} are satisfied and $0 \notin \sigma(L^D_1) \cup \sigma(L^D_2)$, then
\[
	\Mas(\beta(t); \mu(0)) = \Mor_0(\Lambda_1 + \Lambda_2) - \Mor_0(\Lambda_1).
\]
\end{theorem}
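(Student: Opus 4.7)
The plan is to parametrize $\mu(0)$ via the Dirichlet-to-Neumann maps (using the hypothesis $0 \notin \sigma(L_1^D) \cup \sigma(L_2^D)$) and to identify $\Mas(\beta(t); \mu(0))$ with the spectral flow of the selfadjoint family $A(s) = \Lambda_1 + s\Lambda_2$ on $\Sigma$. Since $L_i^D$ is invertible at $\lambda = 0$, each $u_i \in K_i^0$ is uniquely determined by its trace $x_i = u_i|_\Sigma$, and $\Lambda_i x_i = \p_{\nu_i} u_i|_\Sigma$ gives
\[
	\mu(0) = \bigl\{(x_1, \Lambda_1 x_1, x_2, -\Lambda_2 x_2) : x_1, x_2 \in \Hp\bigr\}.
\]
A vector $(x, t\phi, tx, \phi) \in \beta(t)$ lies in $\mu(0)$ if and only if $x_1 = x$, $x_2 = tx$, $t\phi = \Lambda_1 x$, and $\phi = -t\Lambda_2 x$; eliminating $\phi$, the intersection equation reduces to $(\Lambda_1 + t^2 \Lambda_2)x = 0$ with $\phi = -t\Lambda_2 x$.

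Next I would evaluate the Maslov crossing form at $t_0 \in (0,1]$ using the natural curve $v(t) = (x, t\phi, tx, \phi)$ in $\beta(t)$, for which $\dot v(t_0) = (0, \phi, x, 0)$. A direct computation with $\omega_\boxplus = \omega \oplus (-\omega)$ yields $Q(v_0) = \omega_\boxplus(\dot v(t_0), v_0) = -2\phi(x)$, which on the intersection equals $2 t_0\, \langle \Lambda_2 x, x\rangle_{\Hm \times \Hp}$. Considering the selfadjoint family $A(s) = \Lambda_1 + s\Lambda_2$ on $L^2(\Sigma)$ under the reparametrization $s = t^2$, the Hellmann--Feynman formula yields $\dot\mu_k(s) = \langle \Lambda_2 e_k, e_k\rangle$ for any eigenvalue curve $\mu_k(s)$ passing through zero, which matches the Maslov crossing form up to the positive factor $2 t_0$. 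Thus interior Maslov crossings are in signed bijection with zero crossings of eigenvalues of $A(s)$, and $\Mas(\beta(t); \mu(0))$ equals (with the sign dictated by the Maslov convention) the spectral flow of $A(s)$ as $s$ runs from $0$ to $1$.

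To obtain the stated formula, I would handle the degenerate endpoint crossings by perturbation. At $t_0 = 0$ the crossing form vanishes identically (because of the factor $t_0$) yet $\ker \Lambda_1$ contributes to $\beta(0)\cap\mu(0)$, and similarly at $t_0 = 1$ for $\ker(\Lambda_1 + \Lambda_2)$. Replacing $L$ by $L - \epsilon$ for small $\epsilon > 0$ makes $\Lambda_1^\epsilon = \Lambda_1 - \epsilon$-like perturbations invertible and pushes the endpoint kernels into the negative spectrum, so for the perturbed problem the endpoints are nondegenerate and
\[
	\Mor(\Lambda_1^\epsilon + \Lambda_2^\epsilon) - \Mor(\Lambda_1^\epsilon) = \Mor_0(\Lambda_1+\Lambda_2) - \Mor_0(\Lambda_1)
\]
for sufficiently small $\epsilon$. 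Homotopy invariance of the Maslov index as $\epsilon \to 0^+$ (together with the interior crossing-form analysis above) yields the result.

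The main obstacle is the endpoint bookkeeping. One must verify that the perturbation $L \rightsquigarrow L - \epsilon$ preserves the Fredholm/Lagrangian structure needed for the Maslov index to be well defined along the homotopy, and that for small $\epsilon$ no interior crossings are created or destroyed by the perturbation (so that the Maslov index is genuinely constant in $\epsilon$). Once these are in place, the appearance of $\Mor_0$ rather than $\Mor$ is forced precisely by the fact that the shift $-\epsilon$ converts $\dim\ker$ eigenvalues at each endpoint into genuine negative eigenvalues.
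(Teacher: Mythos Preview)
Your parametrization of $\mu(0)$ and the computation $\beta(t)\cap\mu(0) \cong \ker(\Lambda_1 + t^2\Lambda_2)$ are correct, as is the crossing form $\pm 2t_0\langle\Lambda_2 x, x\rangle$. But the approach has two genuine gaps. First, the crossing form in $t$ need not be nondegenerate in the interior: $\Lambda_2$ is not sign-definite on $\ker(\Lambda_1 + t_0^2\Lambda_2)$ in general, so you cannot read the Maslov index off as a sum of crossing signatures without further input (either an abstract Maslov-index-equals-spectral-flow theorem in this infinite-dimensional setting, or a separate genericity argument). Second, and more seriously, the endpoint perturbation $L \rightsquigarrow L-\epsilon$ changes the \emph{reference} subspace $\mu(0)$ to $\mu^\epsilon(0)$, and $\Mas(\beta(t);\mu^\epsilon(0))$ is not locally constant in $\epsilon$ at $\epsilon=0$ when $\ker\Lambda_1$ or $\ker(\Lambda_1+\Lambda_2)$ is nontrivial: homotopy invariance of the Maslov index requires the endpoint intersections $\beta(0)\cap\mu^\epsilon(0)$ and $\beta(1)\cap\mu^\epsilon(0)$ to have constant dimension along the homotopy, and that fails precisely here. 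So the limit $\epsilon\to 0^+$ cannot simply be invoked; whether the index picks up $\dim\ker\Lambda_1$ at $t=0$ (and with which sign) depends on an endpoint convention for $\Mas$ that you never specify, and sorting this out is the actual content of the theorem.

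The paper sidesteps both problems by deforming $\beta$ rather than $\mu$. It introduces a Robin-type parameter $s\le 0$ via $\beta(s,t)=\{(x,\,t\phi+s\cJ x,\,tx,\,\phi)\}$, so that $\beta(0,t)=\beta(t)$ and $\beta(s,t)\cap\mu(0)\cong\ker(\Lambda_1+t^2\Lambda_2-s)$. The crossing form in the $s$-direction is $\|x\|_{L^2(\Sigma)}^2$, which is always strictly positive; hence every crossing in $s$ is regular and $\Mas(\beta(s,t_0);\mu(0))=\Mor_0(\Lambda_1+t_0^2\Lambda_2)$ for each fixed $t_0$, with $\Mor_0$ (rather than $\Mor$) arising cleanly from the endpoint convention for a positive-definite path. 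A uniform lower bound $s_\infty$ on $\sigma(\Lambda_1+t^2\Lambda_2)$ over $t\in[0,1]$ makes the side $s=s_\infty$ crossing-free, and additivity of the Maslov index around the boundary of $[s_\infty,0]\times[0,1]$ then gives the result immediately, with no degenerate crossings or endpoint limits to resolve.
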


This is particularly useful when combined with the following well-known result on the Dirichlet-to-Neumann map.

\begin{theorem}[Friedlander \cite{F91}]\label{thmFriedlander}
Fix $i \in \{1,2\}$. If $0 \notin \sigma(L^D_i)$, then
\[
	\Mor(L^N_i) - \Mor(L^D_i) = \Mor_0(\Lambda_i).
\]
\end{theorem}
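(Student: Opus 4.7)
I would mirror the Maslov-index homotopy argument from Theorem~\ref{MorMas} on the single domain $\Omega_i$, deforming a path of Lagrangian subspaces in $\cH=\Hp\oplus\Hm$ from the Dirichlet to the Neumann boundary condition on $\Sigma$ while holding the boundary conditions on $\p M \cap \p\Omega_i$ fixed.

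Concretely, fix a continuous symmetric injection $J:\Hp\to\Hm$ (for instance, the one induced by the $L^2(\Sigma)$ inner product) and set
\[
\beta_i(\theta)=\bigl\{(x,\phi)\in\Hp\oplus\Hm \,:\, \cos\theta\cdot Jx + \sin\theta\cdot\phi = 0\bigr\},\qquad \theta\in[0,\pi/2].
\]
For $\theta\in(0,\pi/2)$ this is the graph of the symmetric operator $-\cot\theta\cdot J$, hence Lagrangian; the endpoints $\beta_i(0)=\{0\}\oplus\Hm$ and $\beta_i(\pi/2)=\Hp\oplus\{0\}$ are the Dirichlet and Neumann subspaces. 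Let $\mu_i(\lambda)=\{(u|_\Sigma,\tfrac{\p u}{\p\nu_i}|_\Sigma):u\in K_i^\lambda\}$ denote the one-sided Cauchy-data subspace. Running the homotopy argument of Section~\ref{secProof} on the rectangle $(\theta,\lambda)\in[0,\pi/2]\times[-R,0]$, with $R$ chosen so no eigenvalue of the family $L_i^{(\theta)}$ lies below $-R$, I expect to obtain the single-domain identity
\[
\Mor(L^N_i)-\Mor(L^D_i)=\Mas(\beta_i;\mu_i(0)).
\]
The hypothesis $0\notin\sigma(L^D_i)$ guarantees the left edge $\theta=0$ contributes no crossings.

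It remains to identify this Maslov index with $\Mor_0(\Lambda_i)$. A crossing at $\theta_0\in(0,\pi/2]$ corresponds to some $u\in K_i^0$ with $\cos\theta_0\cdot Ju|_\Sigma + \sin\theta_0\cdot\tfrac{\p u}{\p\nu_i}|_\Sigma=0$; since $0\notin\sigma(L^D_i)$, the trace $u|_\Sigma$ is nonzero and the equation rearranges to $\Lambda_i(u|_\Sigma)=-\cot(\theta_0)\cdot J(u|_\Sigma)$. That is, $u|_\Sigma$ is a $(-\cot\theta_0)$-eigenvector of $\Lambda_i$ with respect to the pairing induced by $J$. As $\theta_0$ sweeps $(0,\pi/2]$ the scalar $-\cot\theta_0$ sweeps $(-\infty,0]$ and hits each nonpositive eigenvalue of $\Lambda_i$ exactly once, with multiplicity. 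A crossing-form calculation parallel to the one to be performed for $\beta(t)$ in Section~\ref{secCrossing} should show every crossing is positive, giving $\Mas(\beta_i;\mu_i(0))=\Mor_0(\Lambda_i)$ and completing the proof.

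\textbf{Main obstacle.} The principal technical work is setting up the single-domain Morse--Maslov identity in the middle paragraph: the symplectic setting here is $\cH$ rather than $\cH_\boxplus$, so the Lagrangian and Fredholm properties of $\beta_i$ and $\mu_i$ must be reverified in this simpler space, and the ``no crossings on the bottom edge $\lambda=-R$'' step needs to be rerun. The sign calculation for the crossing form in the final step is expected to follow directly from the template already developed in Section~\ref{secCrossing}, once $\Lambda_i$ is identified as the natural graph operator encoding $\mu_i(0)$.
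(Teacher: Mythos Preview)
Your approach matches the paper's strategy almost exactly---same path $\beta_i(\theta)$, same Cauchy-data subspace $\mu_i(\lambda)$, same homotopy rectangle, same identification of crossings with nonpositive eigenvalues of $\Lambda_i$ via $-\cot\theta$. However, there is a genuine gap at the Dirichlet endpoint that you have not addressed and that the paper explicitly flags as the main difficulty.

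The path $\theta\mapsto\beta_i(\theta)$ is \emph{not continuous at $\theta=0$} in the topology of the Lagrangian Grassmannian (operator-norm convergence of orthogonal projections). For $\theta>0$ the subspace $\beta_i(\theta)$ is the graph of $-\cot\theta\cdot J$ over $\Hp$, and as $\theta\to 0$ this operator blows up; one can show (see the paper's Remark~\ref{cont}) that the upper-left block of the projection onto $\beta_i(\theta)$ satisfies $\|P_{11}(\theta)\|\geq 1$ for every $\theta\neq 0$, whereas $P_{11}(0)=0$. Consequently the Maslov index on the full rectangle $[0,\pi/2]\times[-R,0]$ is not defined, and the homotopy argument cannot be run as you describe. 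Your claim that ``the hypothesis $0\notin\sigma(L^D_i)$ guarantees the left edge $\theta=0$ contributes no crossings'' presupposes that the left edge is part of a continuous loop, which it is not.

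The paper's fix is to restrict to $\theta\in[\epsilon,\pi/2]$ for small $\epsilon>0$, so that the path stays continuous. The bottom edge then corresponds to the Robin realization $L^R_\epsilon$ with boundary condition $\tfrac{\p u}{\p\nu}=-(\cot\epsilon)u$, and one appeals to an external result (convergence of ordered Robin eigenvalues to Dirichlet eigenvalues as $\epsilon\to 0$, from \cite{arendt2012friedlander}) to conclude $\Mor(L^R_\epsilon)=\Mor(L^D_i)$ for $\epsilon$ small. Your proposal would be complete once you incorporate this cutoff-and-limit step; everything else you wrote is correct and parallels the paper.
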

This result was first proved by Friedlander in \cite{F91}, where it was used to establish an inequality between the Dirichlet and Neumann eigenvalues of a Euclidean domain. In \cite{M91} Mazzeo gave a more geometric proof and considered the possibility of generalizing Friedlander's approach to non-Euclidean geometries. In Section \ref{secDN} we discuss Mazzeo's proof in a symplectic framework.

From Theorems \ref{MorMas}, \ref{DNcrossing} and \ref{thmFriedlander} we obtain the following. 

\begin{cor}\label{DNbracket}
Assume the hypotheses of Theorem \ref{DNcrossing}. Then
\begin{align}
	\Mor(L^G) = \Mor(L^D_1) + \Mor(L^D_2) + \Mor_0(\Lambda_1 + \Lambda_2),
\end{align}
hence
\begin{align}\label{GDNinequality}
	\Mor(L^D_1) + \Mor(L^D_2) \leq \Mor(L^G) \leq \Mor(L^N_1) + \Mor(L^N_2).
\end{align}
\end{cor}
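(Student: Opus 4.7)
The approach is a direct bookkeeping exercise: all three ingredients are in hand, and the only non-formal point is a standard subadditivity fact for numbers of nonpositive eigenvalues.

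First, I would combine Theorem \ref{MorMas} with Theorem \ref{DNcrossing} to rewrite
\[
    \Mor(L^G) = \Mor(L^N_1) + \Mor(L^D_2) + \Mor_0(\Lambda_1 + \Lambda_2) - \Mor_0(\Lambda_1).
\]
Applying Theorem \ref{thmFriedlander} on $\Omega_1$ (which is permitted since $0 \notin \sigma(L^D_1)$) gives $\Mor(L^N_1) - \Mor_0(\Lambda_1) = \Mor(L^D_1)$. Substituting this yields the claimed identity
\[
    \Mor(L^G) = \Mor(L^D_1) + \Mor(L^D_2) + \Mor_0(\Lambda_1 + \Lambda_2).
\]

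The lower bound in \eqref{GDNinequality} is then immediate from $\Mor_0(\Lambda_1 + \Lambda_2) \geq 0$. For the upper bound, I would rewrite the right-hand side using Theorem \ref{thmFriedlander} on both $\Omega_1$ and $\Omega_2$:
\[
    \Mor(L^N_1) + \Mor(L^N_2) = \Mor(L^D_1) + \Mor(L^D_2) + \Mor_0(\Lambda_1) + \Mor_0(\Lambda_2),
\]
so that $\Mor(L^G) \leq \Mor(L^N_1) + \Mor(L^N_2)$ becomes the inequality
\[
    \Mor_0(\Lambda_1 + \Lambda_2) \leq \Mor_0(\Lambda_1) + \Mor_0(\Lambda_2).
\]

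The only substantive step is this subadditivity. Since each $\Lambda_i$ is selfadjoint with discrete spectrum and finitely many nonpositive eigenvalues, its strictly positive spectral subspace $V_i^+ \subset L^2(\Sigma)$ has codimension $\Mor_0(\Lambda_i)$, and $\langle \Lambda_i u, u\rangle > 0$ for every nonzero $u \in V_i^+$. On the intersection $V_1^+ \cap V_2^+$, whose codimension is at most $\Mor_0(\Lambda_1) + \Mor_0(\Lambda_2)$, the sum $\Lambda_1 + \Lambda_2$ is strictly positive. By the min-max characterization applied to $\Lambda_1 + \Lambda_2$, this forces $\Mor_0(\Lambda_1 + \Lambda_2)$ to be bounded by the codimension of $V_1^+ \cap V_2^+$, giving the desired inequality. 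This is the one step where I expect to need to be slightly careful about the infinite-dimensional functional analytic setting, but the discreteness of the spectra of $\Lambda_1$ and $\Lambda_2$ makes the min-max argument go through verbatim.
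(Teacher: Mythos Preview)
Your proposal is correct and matches the paper's approach essentially line for line: the paper also derives the identity by combining Theorems \ref{MorMas}, \ref{DNcrossing} and \ref{thmFriedlander}, and then obtains the upper bound from the subadditivity $\Mor_0(\Lambda_1 + \Lambda_2) \leq \Mor_0(\Lambda_1) + \Mor_0(\Lambda_2)$, which it proves separately by the same min-max argument (phrased dually via the nonpositive subspaces $E_i^- \oplus E_i^0$ rather than the positive subspaces $V_i^+$).
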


It is well known that \eqref{GDNinequality} can be derived by a min-max argument---see Proposition XIII.15.4 of \cite{RS78}, where it is observed that the operator $L$ increases when one either adds a hypersurface with Dirichlet boundary conditions or removes a hypersurface with Neumann boundary conditions. Theorem \ref{MorMas} is a quantitative improvement of this result, since it provides the additional information that the inequality
\[
	\Mor(L^G) \geq \Mor(L^D_1) + \Mor(L^D_2)
\]
is strict when $\Mor(\Lambda_1 + \Lambda_2) > 0$, and the inequality
\[
	\Mor(L^G) \leq \Mor(L^N_1) + \Mor(L^N_2)
\]
is strict when $\Mor_0(\Lambda_1 + \Lambda_2) < \Mor_0(\Lambda_1) + \Mor_0(\Lambda_1)$.

The $n$-sphere yields a simple example in which both parts of \eqref{GDNinequality} are strict. Define $L = -\Delta_g - c$  for $c \in \bbR$, where $\Delta_g$ is the Laplace--Beltrami operator on $\bbS^n$, and let $L^G$ denote the global realization of $L$, and $L^D$, $L^N$ the Dirichlet and Neumann realizations of $L$ on the upper hemisphere $\bbS^n_+$. It follows from a reflection argument (cf. Theorem \ref{perturb}) that $\Mor(L^G) = \Mor(L^D) + \Mor(L^N)$, whereas \eqref{GDNinequality} yields $2\Mor(L^D) \leq \Mor(L^G) \leq 2 \Mor(L^N)$. Therefore both inequalities are strict when $\Mor(L^D) < \Mor(L^N)$. This can be achieved by choosing $c > 0$ smaller than the first Dirichlet eigenvalue of $-\Delta_g$ on $\bbS^n_+$.

In general the question of when $\Mor(L^D) < \Mor(L^N)$ is quite subtle, and is intimately related to inequalities between the Dirichlet and Neumann eigenvalues---see \cite{F91,M91} and references therein.

An immediate application of Corollary \ref{DNbracket} is to the study of nodal domains. Suppose $\phi_k$ is the $k$th eigenfunction of $L$, with eigenvalue $\lambda_k$. The \textit{nodal domains} of $\phi_k$ are the connected components of the set $\{\phi_k \neq 0\}$. We denote the total number of nodal domains by $n(\phi_k)$, and define the \textit{nodal deficiency}
\begin{align}
	\delta(\phi_k) = k - n(\phi_k).
\end{align}

\begin{cor}\label{nodalDef}
Suppose $\lambda_k$ is a simple eigenvalue of $L$ and $0$ is a regular value of $\phi_k$. Define $L(\epsilon) = L - (\lambda_k + \epsilon)$ and let $\Lambda_\pm(\epsilon)$ denote the corresponding Dirichlet-to-Neumann maps on $\Omega_\pm = \{\pm\phi_k > 0\}$. If $\epsilon > 0$ is sufficiently small, then
\[
	\delta(\phi_k) = \Mor\left( \Lambda_+(\epsilon) + \Lambda_-(\epsilon) \right).
\]
\end{cor}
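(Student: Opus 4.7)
The plan is to apply Corollary~\ref{DNbracket} to the shifted operator $L(\epsilon) = L - (\lambda_k + \epsilon)$, taking the nodal set $\Sigma = \{\phi_k = 0\}$ as the separating hypersurface. Since $0$ is a regular value of $\phi_k$, $\Sigma$ is a smooth Lipschitz hypersurface and $M \setminus \Sigma$ has finitely many connected components, so the nodal sets $\Omega_+$ and $\Omega_-$ play the roles of $\Omega_1$ and $\Omega_2$ in Theorem~\ref{MorMas}.

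First I would verify that $\Mor(L(\epsilon)^G) = k$. Since $\lambda_k$ is simple, for any $\epsilon \in (0, \lambda_{k+1} - \lambda_k)$ the negative spectrum of $L(\epsilon)^G$ is exactly $\{\lambda_j - (\lambda_k + \epsilon) : j = 1, \dots, k\}$. Next I would show $\Mor(L(\epsilon)^D_+) + \Mor(L(\epsilon)^D_-) = n(\phi_k)$. On each connected nodal domain $\Omega$ the restriction $\phi_k|_\Omega$ has definite sign and vanishes on $\Sigma \cap \p\Omega$, so it is a principal Dirichlet eigenfunction of $L$ on $\Omega$ with eigenvalue $\lambda_k$; hence $\lambda_k$ is the first Dirichlet eigenvalue of $L$ on $\Omega$. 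For $\epsilon$ smaller than $\lambda_2^D(\Omega) - \lambda_k$ over the finitely many nodal domains, the Dirichlet realization of $L(\epsilon)$ on each $\Omega$ has exactly one negative eigenvalue, namely $-\epsilon$; summing gives the claim and simultaneously ensures $0 \notin \sigma(L(\epsilon)^D_\pm)$, so Corollary~\ref{DNbracket} is applicable.

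The main subtle step, which I expect to be the principal obstacle, is showing that $\ker\bigl(\Lambda_+(\epsilon) + \Lambda_-(\epsilon)\bigr) = \{0\}$, so that $\Mor_0$ may be replaced by $\Mor$ in Corollary~\ref{DNbracket}. Given $f \in \Hp$ in this kernel, let $u_\pm \in H^1(\Omega_\pm)$ be the unique weak solution of $L(\epsilon) u_\pm = 0$ with $u_\pm|_\Sigma = f$ and the prescribed conditions on $\p M \cap \p\Omega_\pm$; this is well-posed because $0 \notin \sigma(L(\epsilon)^D_\pm)$. Gluing yields a function $u \in H^1(M)$, and the relation $\Lambda_+(\epsilon) f + \Lambda_-(\epsilon) f = 0$ encodes the matching of conormal derivatives across $\Sigma$, so via~\eqref{Green} the glued function $u$ is a global weak eigenfunction of $L$ with eigenvalue $\lambda_k + \epsilon$. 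Since $\epsilon < \lambda_{k+1} - \lambda_k$ keeps $\lambda_k + \epsilon$ out of $\sigma(L^G)$, we conclude $u \equiv 0$ and hence $f = 0$.

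Assembling these three ingredients with Corollary~\ref{DNbracket} applied to $L(\epsilon)$ yields
\[
	k = n(\phi_k) + \Mor\bigl(\Lambda_+(\epsilon) + \Lambda_-(\epsilon)\bigr),
\]
which rearranges to the desired identity $\delta(\phi_k) = \Mor\bigl(\Lambda_+(\epsilon) + \Lambda_-(\epsilon)\bigr)$. The only delicate point is the simultaneous choice of $\epsilon$: the constraints (avoiding $\sigma(L(\epsilon)^D_\pm)$, maintaining the count on each nodal domain, and keeping $\lambda_k + \epsilon$ out of $\sigma(L^G)$) collectively rule out only a discrete set of $\epsilon$, so any sufficiently small positive $\epsilon$ outside this set works.
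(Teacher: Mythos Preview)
Your proof is correct and follows essentially the same route as the paper: apply Corollary~\ref{DNbracket} to $L(\epsilon)$ with $\Sigma = \phi_k^{-1}(0)$, compute $\Mor(L^G(\epsilon)) = k$ and $\Mor(L^D_\pm(\epsilon)) = n_\pm(\phi_k)$, and subtract. Your explicit verification that $\ker(\Lambda_+(\epsilon) + \Lambda_-(\epsilon)) = \{0\}$ (needed to pass from $\Mor_0$ to $\Mor$) is a step the paper leaves implicit, merely noting that $L^G(\epsilon)$ is invertible; your gluing argument makes this passage transparent, and in fact your ``discrete set'' caveat is unnecessary since all the constraints on $\epsilon$ are simply upper bounds.
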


Since the right-hand side is nonnegative, this implies $n(\phi_k) \leq k$, which is Courant's nodal domain theorem \cite{CH53}. In \cite{BKS12} Berkolaiko, Kuchment and Smilinsky gave a different formula for the nodal deficiency as the Morse index of a certain energy functional defined on the space of equipartitions of $M$.

In some cases we can compute $\Mas(\beta(t); \mu(0))$ by finding conditions that ensure the index vanishes. The easiest case is when the Dirichlet-to-Neumann maps for $\Omega_1$ and $\Omega_2$ coincide. In this situation Theorem \ref{DNcrossing} yields
\[
	\Mas(\beta(t); \mu(0)) =  \Mor_0(\Lambda_1 + \Lambda_2) - \Mor_0(\Lambda_1) = 0
\]
because $\Lambda_1 + \Lambda_2 = 2\Lambda_1$ has the same Morse index as $\Lambda_1$.

The problem of determining when $\Lambda_1 = \Lambda_2$ is in general quite difficult, even for $L = -\Delta_g$. If $\tau\colon M \to M$ is an isometry with $\tau(\Sigma_D) = \Sigma_D$, $\tau(\Sigma_N) = \tau(\Sigma_N)$ and $\left.\tau\right|_\Sigma = \operatorname{id}$, then $\Lambda_1 = \Lambda_2$. In the real analytic case, $\Lambda_1 = \Lambda_2$ implies $\Omega_1$ and $\Omega_2$ are isometric \cite{LTU03}, but the smooth case is still unresolved.

However, we are able to prove that $\beta(t) \cap \mu(0) = \{0\}$ for all $t$, hence $\Mas(\beta(t); \mu(0)) = 0$, provided the Dirichlet-to-Neumann maps are sufficiently close. To that end, it is convenient to view them as bounded operators
\[
	\tL_i \colon \Hp \lra \Hm,
\]
which are well defined if $0 \notin \sigma(L^D_i)$. If $0 \notin \sigma(L^N_i)$, by an abuse of notation we let
\[
	\tL_i^{-1} \colon \Hm \lra \Hp
\]
denote the corresponding Neumann-to-Dirichlet map. (If $0 \notin \sigma(L^D_i) \cup \sigma(L^N_i)$, the operators $\tL_i$ and $\tL_i^{-1}$ both exist and are mutually inverse.)

\begin{theorem}\label{perturb}
Assume $0 \notin \sigma(L^N_1) \cup \sigma(L^D_2)$. If there exists $c$ such that
\[
	\left\| \tL_1^{-1} \tL_2 - cI \right\|_{B(\Hp)} < 1+c,
\]
then $\Mas(\beta(t); \mu(0)) = 0$, hence
\[
	\Mor(L^G) = \Mor(L^N_1) + \Mor(L^D_2).
\]
\end{theorem}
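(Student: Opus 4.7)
The plan is to invoke Theorem \ref{MorMas} to reduce the claim to $\Mas(\beta(t);\mu(0)) = 0$, and then to establish this by showing the stronger statement $\beta(t) \cap \mu(0) = \{0\}$ for every $t \in [0,1]$. A Lagrangian path that meets the target trivially at every parameter contributes no crossings, so its Maslov index vanishes.

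The key step is translating the intersection condition into an operator equation on $\Hp$. An element of $\beta(t) \cap \mu(0)$ yields $u_i \in K_i^0$ and $(x,\phi) \in \Hp \oplus \Hm$ with $(u_1|_\Sigma, \p_{\nu_1}u_1|_\Sigma, u_2|_\Sigma, -\p_{\nu_2}u_2|_\Sigma) = (x, t\phi, tx, \phi)$. Since $0 \notin \sigma(L^D_2)$ the operator $\tL_2$ is well defined, and $0 \notin \sigma(L^N_1)$ makes $\tL_1$ invertible; substituting into the system collapses the four equations to $\tL_1 x + t^2 \tL_2 x = 0$, equivalently $(I + t^2 A)x = 0$ with $A := \tL_1^{-1}\tL_2$. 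Thus a nontrivial intersection at $t \in (0,1]$ would force $-t^{-2} \in (-\infty, -1]$ to be an eigenvalue of $A$. The endpoint $t = 0$ is handled directly: the intersection equations there force $u_1 \in \ker L^N_1$ and $u_2 \in \ker L^D_2$, both trivial by hypothesis.

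The perturbation hypothesis $\|A - cI\|_{B(\Hp)} < 1+c$ (which implicitly requires $c > -1$) then rules out any real eigenvalue $\lambda \leq -1$ of $A$: such a $\lambda$ would satisfy $|\lambda - c| = c - \lambda \geq c+1 = 1+c$, contradicting $\|A - cI\| \geq |\lambda - c|$. Hence no crossing occurs, $\Mas(\beta(t);\mu(0)) = 0$, and the conclusion follows from Theorem \ref{MorMas}.

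The main obstacle I anticipate is the algebraic reduction above, together with the associated sign bookkeeping: because $\nu_1 = -\nu_2$ on $\Sigma$, combined with the asymmetric minus sign in the definition of $\mu$, the obstructing eigenvalues of $A$ land in $(-\infty, -1]$ rather than $[1,\infty)$, and it is exactly this placement that the hypothesis is tailored to exclude. Once the reduction and signs are settled, the spectral estimate is a single line.
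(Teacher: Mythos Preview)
Your proposal is correct and follows essentially the same route as the paper: both reduce the absence of crossings to showing $I + t^2\tL_1^{-1}\tL_2$ has trivial kernel on $\Hp$ for each $t\in[0,1]$, and both dispatch this using the bound $\|\tL_1^{-1}\tL_2 - cI\| < 1+c$ (the paper via a Neumann-series factorization $I + t^2A = (1+ct^2)\bigl[I + \tfrac{t^2}{1+ct^2}(A-cI)\bigr]$, you via the equivalent eigenvalue estimate). One small caution: your intermediate equation $\tL_1 x + t^2\tL_2 x = 0$ invokes $\tL_1$, which is only defined when $0\notin\sigma(L_1^D)$---not part of the hypotheses---so you should derive $(I + t^2\tL_1^{-1}\tL_2)x = 0$ directly from the Neumann-to-Dirichlet map $\tL_1^{-1}$ (which \emph{is} available since $0\notin\sigma(L_1^N)$) rather than passing through $\tL_1$.
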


\begin{center}
\begin{figure}
\begin{tikzpicture}
	\draw[thick] (0,0) -- (4,0); 
	\draw[thick] (0,2) -- (4,2); 
	\draw[thick,dashed] (0,0) arc (-90:90:0.2 and 1); 
	\draw[thick] (0,0) arc (270:90:0.2 and 1); 
	\draw[thick,dashed] (2,0) arc (-90:90:0.2 and 1); 
	\draw[thick] (2,0) arc (270:90:0.2 and 1); 
	\draw[thick] (4,1) ellipse (0.2 and 1); 
	\node at (0.8,1) {$\Omega_1$}; 
	\node at (1.5,1) {$\Sigma$}; 
	\node at (3,1) {$\Omega_2$}; 
	\node at (-0.6,1) {$\p M$}; 
	\node at (4.6,1) {$\p M$}; 
\end{tikzpicture}
\caption{An example of the doubling construction in Section \ref{secDoubled}.}
\label{double}
\end{figure}
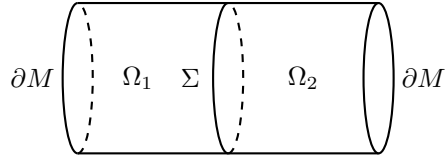
\end{center}

The corollary applies to the cylinder shown in Figure \ref{double} if one prescribes the same boundary conditions (either Dirichlet or Neumann) on both ends of the cylinder. With mixed boundary conditions it is possible that the Maslov index is nonzero, as is demonstrated by a simple example in Section \ref{secDoubled}.

Using similar methods, we can also describe the spectra of periodic eigenvalue problems and, more generally, problems in which the boundary is divided into two components, $\pM = \Gamma_1 \cup \Gamma_2$, which are identified by a map $\tau\colon \Gamma_1 \to \Gamma_2$ as shown in Figure \ref{periodic}. We let $L^D$ denote the Dirichlet realization of $L$, and $L^P$ the ``periodic" realization, with domain
\[
	\cD(L^P) = \left\{u \in H^1(M) : Lu \in L^2(M), \left.u\right|_{\Gamma_1} = \left.u\right|_{\Gamma_2} \circ \tau
	\text{ and } \left.\frac{\p u}{\p \nu}\right|_{\Gamma_1} = -\left.\frac{\p u}{\p \nu}\right|_{\Gamma_2} \circ \tau \right\}.
\]
To state the result we must also define the ``periodic Dirichlet-to-Neumann map" $\Lambda_\tau$, assuming $0 \notin \sigma(L^D)$. For a function $f$ on $\Gamma_1$ we let $u$ denote the unique solution to the boundary value problem
\[
	Lu = 0, \quad \left.u\right|_{\Gamma_1} = f, \quad \left.u\right|_{\Gamma_2} = f \circ \tau^{-1}.
\]
and define
\begin{align}
\label{Ltdef}
	\Lambda_\tau f = \left.\frac{\p u}{\p \nu}\right|_{\Gamma_1} + \left.\frac{\p u}{\p \nu} \right|_{\Gamma_2}\circ \tau.
\end{align}
It is shown in Section \ref{secPeriodic} that $\Lambda_\tau$ defines an unbounded, selfadjoint operator on $L^2(\Gamma_1)$, with domain
\[
	\cD(\Lambda_\tau) = \left\{ f \in L^2(\Gamma_1) : \left.\frac{\p u}{\p \nu}\right|_{\Gamma_1} + \left.\frac{\p u}{\p \nu} \right|_{\Gamma_2}\circ \tau \in L^2(\Gamma_1) \right\}.
\]
Moreover, it is bounded from below and has compact resolvent, and hence has a well-defined Morse index.

For the following theorem to hold, it is necessary that the boundary can be subdivided into pieces on which the map $\tau$ is Lipschitz. (In general $\tau$ will not be globally Lipschitz---for the cube with opposing faces identified it fails to be continuous at the corners.) We let $d\mu_1$ and $d\mu_2$ denote the induced area forms on $\Gamma_1$ and $\Gamma_2$, respectively.

\begin{center}
\begin{figure}
\begin{tikzpicture}
	\draw[thick] (0,0) -- (0,3); 
	\draw[thick] (0,0) -- (3,0); 
	\draw[very thick,dashed] (0,3) -- (3,3); 
	\draw[very thick, dashed] (3,0) -- (3,3); 
	\node at (1.5,-0.3) {$\Gamma_1$}; 
	\node at (1.5,3.3) {$\Gamma_2$}; 
	\node at (-0.3,1.5) {$\Gamma_1$}; 
	\node at (3.3,1.5) {$\Gamma_2$}; 
	\node at (1.5,1.5) {$M$}; 
\end{tikzpicture}
\caption{A manifold $M$ with boundary $\pM = \Gamma_1 \cup \Gamma_2$, as in the statement of Theorem \ref{thmPeriodic}.}
\label{periodic}
\end{figure}
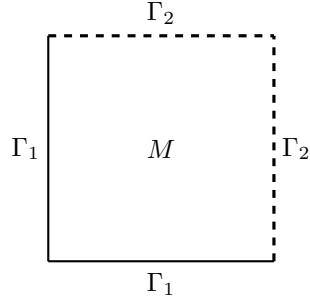
\end{center}

\begin{theorem}\label{thmPeriodic}
Suppose $\Gamma_1$ can be decomposed as $\overline{\Gamma_1^1 \cup \cdots \cup \Gamma_1^N}$, where each $\Gamma_1^i$ is an open subset of $\pM$ with Lipschitz boundary, and the restrictions $\left.\tau\right|_{\Gamma_1^i} \colon\Gamma_1^i \to \tau(\Gamma_1^i)$ are Lipschitz. If $0 \notin \sigma(L^D)$ and $\tau^* d\mu_2 = d\mu_1$, then
\[
	\Mor(L^P) = \Mor(L^D) + \Mor_0(\Lambda_\tau).
\]
\end{theorem}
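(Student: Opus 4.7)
The plan is to set up a Maslov-index framework on $\pM = \Gamma_1 \cup \Gamma_2$ and run the same rectangle-homotopy argument as in the proof of Theorem \ref{MorMas}. First I would take $\cH_i = H^{1/2}(\Gamma_i) \oplus H^{-1/2}(\Gamma_i)$ with its canonical form $\omega_i$, and set $\cH_\boxplus = \cH_1 \oplus \cH_2$ with form $\omega_\boxplus = \omega_1 + \omega_2$. The plus sign, in contrast to the $\omega \oplus (-\omega)$ convention of Theorem \ref{MorMas}, reflects that $\nu$ is outward on \emph{both} $\Gamma_i$: Green's identity applied to a single $u \in H^1(M)$ reads exactly as the vanishing of $\omega_1 + \omega_2$ on its trace quadruple. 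I would then define
\[
\mu(\lambda) = \left\{\left(u|_{\Gamma_1},\, \tfrac{\p u}{\p\nu}\big|_{\Gamma_1},\, u|_{\Gamma_2},\, \tfrac{\p u}{\p\nu}\big|_{\Gamma_2}\right) : u \in H^1(M),\, Lu = \lambda u\right\},
\]
a smooth family of Lagrangian subspaces in $\cH_\boxplus$, following the same reasoning used in Section \ref{secProof}.

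Next I would set up the two endpoint Lagrangians: the Dirichlet subspace $\ell_D = \{(0, \phi_1, 0, \phi_2)\}$ and the $\tau$-periodic subspace
\[
\ell_\tau = \left\{(x,\, \phi,\, x\circ\tau^{-1},\, -\phi\circ\tau^{-1}) : x \in H^{1/2}(\Gamma_1),\ \phi \in H^{-1/2}(\Gamma_1)\right\}.
\]
The piecewise Lipschitz decomposition of $\Gamma_1$ is precisely what is needed to make sense of $\tau^*$ on the relevant Sobolev spaces, and the area-preservation $\tau^* d\mu_2 = d\mu_1$ is the exact hypothesis that makes the duality pairing transform correctly under $\tau^{-1}$ so that $\ell_\tau$ is isotropic (hence Lagrangian by dimension count). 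By construction, intersections $\mu(\lambda) \cap \ell_D$ and $\mu(\lambda) \cap \ell_\tau$ correspond bijectively to $\lambda$-eigenspaces of $L^D$ and $L^P$, respectively.

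I would then fix a smooth Lagrangian path $\beta: [0,1] \to \Lambda(\cH_\boxplus)$ with $\beta(0) = \ell_D$ and $\beta(1) = \ell_\tau$, and apply homotopy invariance of the Maslov index to the rectangle $[0,1]_t \times [-\lambda_\infty, 0]_\lambda$, with $\lambda_\infty$ large enough that $\mu(\lambda) \cap \beta(t) = \{0\}$ on the top edge. The same crossing-form bookkeeping as in Section \ref{secProof} (using the hypothesis $0 \notin \sigma(L^D)$ to rule out a crossing at the bottom-left corner) then yields
\[
\Mor(L^P) - \Mor(L^D) = \Mas(\beta(t); \mu(0)).
\]

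The hard part is the final identification $\Mas(\beta(t); \mu(0)) = \Mor_0(\Lambda_\tau)$, directly analogous to Theorem \ref{DNcrossing}. The plan is to choose $\beta$ explicitly so that a crossing $u \in \mu(0) \cap \beta(t_0)$ (a function $u$ with $Lu = 0$ on $M$ and Cauchy data in $\beta(t_0)$) is equivalent to an eigenvalue equation $\Lambda_\tau(u|_{\Gamma_1}) = c(t_0)\, u|_{\Gamma_1}$ for a strictly monotone function $c: (0,1] \to (-\infty, 0]$, so that the signed crossing count tallies the nonpositive eigenvalues of $\Lambda_\tau$ with multiplicity. The technical obstacle here is that crossings involve Cauchy data on both $\Gamma_1$ and $\Gamma_2$, whereas $\Lambda_\tau$ lives on $L^2(\Gamma_1)$; the area-preservation hypothesis $\tau^* d\mu_2 = d\mu_1$, together with the selfadjoint realization of $\Lambda_\tau$ established in Section \ref{secPeriodic}, is precisely what allows one to fold the $\Gamma_2$-data back onto $\Gamma_1$ and express the crossing form as the quadratic form of a shifted $\Lambda_\tau$ on a single boundary component.
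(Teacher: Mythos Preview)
Your overall architecture (rectangle homotopy, monotonicity in $\lambda$, crossing-form identification) is the right one, and your symplectic setup on $\cH_1 \oplus \cH_2$ with form $\omega_1 + \omega_2$ is equivalent to the paper's $\cH_1 \oplus \cH_1$ with $\omega_1 \oplus (-\omega_1)$ after pulling the $\Gamma_2$-data back by $\tau$. The substantive difference is your choice of endpoints: you connect $\ell_D$ directly to $\ell_\tau$, whereas the paper interpolates from the \emph{mixed} subspace (Neumann on $\Gamma_1$, Dirichlet on $\Gamma_2$) to $\ell_\tau$ via the explicit path $\beta(t) = \{(x, t\phi, tx, \phi)\}$, obtaining first $\Mor(L^P) = \Mor(L^{DN}) + \Mas(\beta(t);\mu(0))$, then $\Mas(\beta(t);\mu(0)) = \Mor_0(\Lambda_\tau) - \Mor_0(\Lambda_1)$ via the $(s,t)$-homotopy, and finally $\Mor(L^{DN}) = \Mor(L^D) + \Mor_0(\Lambda_1)$ by the Friedlander argument.

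The gap in your direct route is in the last paragraph. To make crossings of $\beta(t)$ with $\mu(0)$ correspond to eigenvalues of $\Lambda_\tau$, you need $\beta(t)$ for $t>0$ to impose the periodic matching $x_2 = x_1 \circ \tau^{-1}$ together with a Robin-type condition $\Lambda_\tau x_1 = c(t) x_1$, while $\beta(0) = \ell_D$ imposes $x_1 = x_2 = 0$ with the normal derivatives unconstrained. Any such family is discontinuous at $t=0$ in the Lagrangian Grassmannian: this is exactly the phenomenon worked out in Remark~\ref{cont}, where the orthogonal projection onto the Robin subspace fails to converge to the Dirichlet projection. (It is also unclear that the uniform lower bound $\lambda_\infty$ survives along an arbitrary smooth path joining $\ell_D$ to $\ell_\tau$; the integration-by-parts trick in Lemma~\ref{uniformbound} depends on the specific algebraic form of $\beta(t)$.) The paper's detour through $L^{DN}$ is not cosmetic: it replaces the discontinuous Dirichlet endpoint by a mixed one for which $\beta(t)$ is manifestly smooth, and relegates the genuine Dirichlet limit to the separate Friedlander step, which is handled by the ``almost Dirichlet'' $\epsilon$-approximation of Section~\ref{secDN}. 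If you want to salvage the direct approach, you would need to insert the same $\epsilon$-truncation at the $\ell_D$ end and invoke the Robin-to-Dirichlet eigenvalue convergence; without it, the Maslov index along your path is not defined.
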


Thus the periodic problem on $M$ is related to the Dirichlet problem, and the difference in Morse indices is quantified by the periodic Dirichlet-to-Neumann map $\Lambda_\tau$. This is useful because separated boundary conditions are often easier to work with than periodic boundary conditions and more techniques are available for their study.


\section{Proof of the main theorem}\label{secProof}
In this section we prove Theorem \ref{MorMas}. We first describe how the subspaces $\mu(\lambda)$ and $\beta(t)$, defined in \eqref{mudef} and \eqref{betadef}, contain spectral data for the operators $L^G$, $L^D_i$ and $L^N_i$. Next we prove that $\mu(\lambda)$ and $\beta(t)$ are smooth families of Lagrangian subspaces in $\cH_\boxplus$ (as defined in \eqref{Hbpdef}) and comprise a Fredholm pair, so their Maslov index is well defined. Finally, we use the homotopy invariance of the Maslov index to prove the theorem.

Recall that $\mu(\lambda)$ encodes the boundary data of weak solutions to the equation $Lu = \lambda u$, with no boundary conditions imposed on $\Sigma$, whereas $\beta(t)$ defines a one-parameter family of boundary conditions that does not depend on $L$. The significance of the endpoints $t=0,1$ is the following.

\begin{lemma}\label{intersection}
 Let $\lambda \in \bbR$. Then
 \begin{align*}
 	\dim \left[\mu(\lambda) \cap \beta(0)\right] &= \dim \ker (L^N_1 - \lambda) + \dim \ker (L^D_2 - \lambda) \\
	\dim \left[\mu(\lambda) \cap \beta(1)\right] &= \dim \ker (L^G - \lambda).
 \end{align*}
\end{lemma}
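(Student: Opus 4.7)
The plan is, for each endpoint $t \in \{0,1\}$, to exhibit an explicit linear isomorphism between the claimed kernel space and the intersection $\mu(\lambda) \cap \beta(t)$, and then to read off the dimensions. In both cases the map will simply be the Cauchy-data map on $\Sigma$; surjectivity will reduce to reading off the conditions defining $\beta(t)$, while injectivity will rest on weak unique continuation for $L - \lambda$.

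For $t = 0$: since $\beta(0) = \{(x, 0, 0, \phi) : x \in \Hp,\ \phi \in \Hm\}$, any element of $\mu(\lambda) \cap \beta(0)$ is the Cauchy datum of a pair $u_i \in K_i^\lambda$ that additionally satisfies $\p u_1/\p\nu_1|_\Sigma = 0$ and $u_2|_\Sigma = 0$. Combined with the conditions on $\pO_i \cap \pM$ already built into $K_i^\lambda$, this is exactly the requirement that $u_1 \in \cD(L^N_1)$ and $u_2 \in \cD(L^D_2)$, hence $u_1 \in \ker(L^N_1 - \lambda)$ and $u_2 \in \ker(L^D_2 - \lambda)$. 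I would therefore show that the restriction map
\[
\Phi_0 \colon \ker(L^N_1 - \lambda) \oplus \ker(L^D_2 - \lambda) \lra \mu(\lambda) \cap \beta(0), \quad (u_1, u_2) \longmapsto \left(u_1|_\Sigma,\, 0,\, 0,\, -\tfrac{\p u_2}{\p \nu_2}\Big|_\Sigma\right),
\]
is a bijection. Surjectivity is immediate from the previous sentence. For injectivity, vanishing image forces both traces of $u_1$ listed in \eqref{trace} to vanish on $\Sigma$, so extending $u_1$ by zero across $\Sigma$ yields an $H^1$ weak solution of $(L - \lambda)u = 0$ on $M$ (the Green-identity boundary terms on $\Sigma$ cancel precisely because both Cauchy data are zero); unique continuation then forces $u_1 \equiv 0$, and the argument for $u_2$ is symmetric.

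For $t = 1$: here $\beta(1) = \{(x, \phi, x, \phi)\}$ is the diagonal, so membership in $\mu(\lambda) \cap \beta(1)$ is equivalent to the matching conditions $u_1|_\Sigma = u_2|_\Sigma$ and $\p u_1/\p\nu_1 + \p u_2/\p\nu_2 = 0$ on $\Sigma$, where $u_i \in K_i^\lambda$. The map I would use is the restriction
\[
\Psi \colon \ker(L^G - \lambda) \lra \mu(\lambda) \cap \beta(1), \quad u \longmapsto \left(u|_\Sigma,\, \tfrac{\p u}{\p \nu_1}\big|_\Sigma,\, u|_\Sigma,\, -\tfrac{\p u}{\p \nu_2}\big|_\Sigma \right),
\]
which lands in the diagonal since $\nu_1 = -\nu_2$ along $\Sigma$. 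For surjectivity, glue $u_1,u_2$ into a single $u$ on $M$ by $u|_{\Omega_i} = u_i$: the first matching condition places $u$ in $H^1(M)$, and summing Green's identity \eqref{Green} on $\Omega_1$ and $\Omega_2$ against a test function $v$ satisfying the outer boundary conditions, the contributions on $\Sigma$ collapse to $\int_\Sigma v\,(\p u_1/\p\nu_1 + \p u_2/\p\nu_2)$, which vanishes by the second matching condition; hence $Lu = \lambda u$ weakly on all of $M$ and $u \in \cD(L^G)$. Injectivity reduces once again to unique continuation, applied separately on each $\Omega_i$ to a solution with vanishing Cauchy data on $\Sigma$.

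The main obstacle is the unique continuation step, which is needed for weak $H^1$ solutions of $(L - \lambda)u = 0$ under the low-regularity assumptions (Lipschitz metric, bounded potential, Lipschitz $\Sigma$) adopted in the paper. Since the simultaneous vanishing of the Dirichlet trace and conormal derivative permits extension by zero across $\Sigma$, the problem can be reduced to interior weak unique continuation on $M$, which I would invoke as a standard result for second-order elliptic operators in divergence form.
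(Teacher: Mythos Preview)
Your proposal is correct and follows essentially the same route as the paper: identify $\beta(0)$ and $\beta(1)$ explicitly, read off the resulting boundary conditions on $\Sigma$ to match the domains of $L^N_1,L^D_2$ and $L^G$, and invoke unique continuation (the paper cites Proposition~2.5 of \cite{BR12}, equivalently Proposition~2.2 of \cite{CJM14}) for the bijectivity of the Cauchy-data map. Your write-up is in fact more explicit than the paper's---you spell out the gluing via Green's identity for $t=1$ and the extension-by-zero reduction of the unique continuation step---but the underlying argument is the same; note only that the paper assumes the metric is merely $L^\infty$, not Lipschitz, so the cited unique continuation result is the appropriate black box rather than a generic divergence-form statement.
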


The proof relies on a version of the unique continuation principle (Proposition 2.5 of \cite{BR12}) which says that there is a one-to-one correspondence between weak solutions in $K^\lambda_i$ and their Cauchy data in $\Hp \oplus \Hm$; cf. Proposition 2.2 in \cite{CJM14}.

\begin{proof} 
For the first claim observe that $\beta(0) = \{(x,0,0,\phi) : x \in \Hp, \phi \in \Hm \}$, and so $\mu(\lambda) \cap \beta(0)$ is nontrivial when there exist functions $u_i \in K_i^\lambda$, not both zero, such that
\[
	\left. \frac{\p u_1}{\p \nu_1} \right|_{\Sigma} = 0, \quad \left. u_2 \right|_{\Sigma} = 0.
\]
Therefore $u_1 \in \cD(L^N_1)$ and $u_2 \in \cD(L^D_2)$, with $(L^N_1 - \lambda) u_1 = 0$ and $(L^D_2 - \lambda)u_2 = 0$. Since (at least) one of $u_1$ and $u_2$ is nonzero, we conclude that $\lambda \in \sigma(L^N_1) \cup \sigma(L^D_2)$. On the other hand, if $\lambda \in \sigma(L^N_1)$, the corresponding eigenfunction satisfies $u_1\in K_1^\lambda$, hence
\[
	\left( \left.u_1\right|_\Sigma,0,0, 0 \right) \in \mu(\lambda) \cap \beta(0),
\]
and similarly when $\lambda \in \sigma(L^D_2)$. This completes the proof of the first equality.

For the second equality we observe that $\beta(1) = \{(x,\phi,x,\phi) : x \in \Hp, \phi \in \Hm \}$, and so $\mu(\lambda) \cap \beta(1)$ is nontrivial if and only if there exist functions $u_i \in K_i^\lambda$ such that
\[
	\left. u_1 \right|_{\Sigma} = \left. u_2 \right|_{\Sigma}, \quad \left. \frac{\p u_1}{\p \nu_1} \right|_{\Sigma} = -\left. \frac{\p u_2}{\p \nu_2} \right|_{\Sigma}.
\]
But this is true precisely when there exists a weak solution $u \in H^1(M)$ to $Lu = \lambda u$ (with $\left.u\right|_{\Omega_i} = u_i$ for $i \in \{1,2\}$) such that
\[
	\left.u\right|_{\Sigma_D} = 0,\quad \left.\frac{\p u}{\p \nu}\right|_{\Sigma_N} = 0,
\]
which is equivalent to $\lambda \in \sigma(L^G)$.
%
%
\end{proof}

We next show that the set of $\lambda$ for which $\mu(\lambda)$ and  $\beta(t)$ intersect nontrivially is bounded below uniformly in $t$.

\begin{lemma}\label{uniformbound}
There exists $\lambda_\infty < 0$ such that $\mu(\lambda) \cap \beta(t) = \{0\}$ for all $\lambda \leq \lambda_\infty$ and $t \in [0,1]$.
\end{lemma}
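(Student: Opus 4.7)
The plan is to test the weak equation $Lu_i = \lambda u_i$ against $u_i$ itself on each component $\Omega_i$ and exploit the specific algebraic form of $\beta(t)$ to produce cancellation of the boundary terms on $\Sigma$. This will collapse the two identities into a single global energy equality on $M$ from which $u_i \equiv 0$ falls out once $\lambda$ is negative enough that $V - \lambda$ is bounded below by a positive constant.

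Given $\xi = (x, t\phi, tx, \phi) \in \mu(\lambda) \cap \beta(t)$, pick witnesses $u_i \in K_i^\lambda$ realizing these Cauchy data on $\Sigma$. Note $\Delta_g u_i = (V-\lambda)u_i \in L^2(\Omega_i)$ since $V \in L^\infty$, so the weak Green's identity \eqref{Green} applies. Taking $v = u_i$ and substituting for $\Delta_g u_i$ gives
\[
\int_{\Omega_i} \bigl(|\nabla u_i|^2 + (V-\lambda)|u_i|^2\bigr) \;=\; \int_{\partial \Omega_i} u_i\, \frac{\partial u_i}{\partial \nu_i},
\]
where the right-hand side is interpreted via the $H^{1/2}$-$H^{-1/2}$ duality pairing on $\partial\Omega_i$. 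Because $\Sigma \cap \partial M = \varnothing$, the two pieces $\Sigma$ and $\partial M \cap \partial\Omega_i$ are disjoint closed subsets of $\partial \Omega_i$, so the pairing splits cleanly as a sum of the two contributions; the boundary conditions in \eqref{Kdef} then annihilate the $\partial M$ piece.

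The remaining $\Sigma$-pairings, evaluated using \eqref{betadef}, are $(t\phi)(x) = t\,\phi(x)$ for $i = 1$ and $(-\phi)(tx) = -t\,\phi(x)$ for $i = 2$. Summing the two identities kills these, and writing $u$ for the piecewise function equal to $u_i$ on $\Omega_i$ (which lies in $L^2(M)$, though not necessarily in $H^1(M)$) produces
\[
\int_{\Omega_1\cup\Omega_2} \bigl(|\nabla u|^2 + (V-\lambda)|u|^2\bigr) \;=\; 0.
\]
Choose $\lambda_\infty := -\|V\|_{L^\infty(M)} - 1$, which is negative. For every $\lambda \leq \lambda_\infty$ we have $V - \lambda \geq 1$ almost everywhere, so both terms of the integrand are pointwise nonnegative, forcing $u \equiv 0$, hence $x = 0$ and $\phi = 0$, so $\xi = 0$.

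I expect the only nontrivial step to be recognizing the cancellation on $\Sigma$: it is the concrete incarnation of the self-orthogonality $\omega_\boxplus(\xi,\xi) = 0$, which is automatic for any $\xi$ but is decisive here because the two $\Sigma$-pairings are precisely the two summands of $\omega_\boxplus(\xi,\xi)$ with opposite signs. Once this is noticed, the rest is a routine energy estimate using only that $V$ is bounded and $\lambda$ sufficiently negative.
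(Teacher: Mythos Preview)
Your proof is correct and follows essentially the same route as the paper's: test $Lu_i = \lambda u_i$ against $u_i$ on each $\Omega_i$, use the $\beta(t)$ structure to see that the $\Sigma$-boundary terms cancel, and conclude from the resulting global energy identity that $u_1 = u_2 = 0$ once $\lambda < \inf V$. The only cosmetic differences are that the paper writes the cancellation as $u_1\,\partial u_1/\partial\nu_1 = -\,u_2\,\partial u_2/\partial\nu_2$ rather than in terms of $(x,\phi)$, and takes any $\lambda_\infty < \inf V$ rather than your explicit $-\|V\|_{L^\infty}-1$.
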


\begin{proof} Suppose $\mu(\lambda) \cap \beta(t) \neq \{0\}$. By definition, there exist functions $u_i \in K_i^\lambda$ such that
\[
	\left.u_2\right|_\Sigma = t \left.u_1\right|_\Sigma, \quad \left. \frac{\p u_1}{\p \nu_1}\right|_\Sigma = -t \left.\frac{\p u_2}{\p \nu_2}\right|_\Sigma,
\]
hence
\[
	\left. u_1 \frac{\p u_1}{\p \nu_1}\right|_\Sigma = - \left. u_2 \frac{\p u_2}{\p \nu_2}\right|_\Sigma.
\]
Integrating by parts, we obtain
\begin{align*}
	\int_{\Omega_1} \left[|\nabla u_1|^2 + (V - \lambda)u_1^2\right]
	& = \int_\Sigma u_1 \frac{\p u_1}{\p \nu_1} \\
	&= -\int_\Sigma u_2 \frac{\p u_2}{\p \nu_2} 
	= -\int_{\Omega_2} \left[ |\nabla u_2|^2 + (V - \lambda)u_2^2 \right]
\end{align*}
which implies
\[
	\lambda \left( \int_{\Omega_1} u_1^2 +  \int_{\Omega_2} u_2^2 \right) \geq \int_{\Omega_1} V u_1^2 + \int_{\Omega_2}V u_2^2
\]
and so it suffices to choose
\[
	\lambda_\infty < \inf_{x\in M} V(x).
\]
\end{proof}

To define the Maslov index of $\mu$ with respect to $\beta$ (and vice versa), we need to prove that $\mu(\lambda)$ and $\beta(t)$ are continuous curves in the Lagrangian Grassmannian of $\cH_\boxplus$ and comprise a Fredholm pair for each $\lambda$ and $t$. We recall that a curve $\gamma\colon I \to \Lambda(\cH_\boxplus)$ is said to be $C^k$ if the corresponding curve of orthogonal projections, $t \mapsto P_{\gamma(t)}$, is contained in $C^k\left(I, B(\cH_\boxplus)\right)$.

\begin{lemma} For $(\lambda,t) \in \bbR \times [0,1]$ the subspaces $\mu(\lambda)$ and $\beta(t)$ are Lagrangian, and the curves $\lambda \mapsto \mu(\lambda)$ and $t \mapsto \beta(t)$ are smooth.
\end{lemma}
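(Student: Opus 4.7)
\medskip

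My plan is to treat the $\beta$ and $\mu$ families separately, since the former is algebraically explicit while the latter requires PDE input.

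For $\beta(t)$, I would observe that the map $\Phi_t\colon \Hp\oplus\Hm \to \cH_\boxplus$ defined by $\Phi_t(x,\phi)=(x,t\phi,tx,\phi)$ is a bounded injection with $\beta(t)=\operatorname{ran}\Phi_t$, and is polynomial (in fact linear) in $t$. To verify the Lagrangian property, the isotropy condition follows from a direct one-line calculation: pairing two elements of $\beta(t)$ against $\omega_\boxplus=\omega\oplus(-\omega)$ gives $t\phi_2(x_1)-t\phi_1(x_2)-t\phi_2(x_1)+t\phi_1(x_2)=0$. For maximality, if $(y_1,\psi_1,y_2,\psi_2)$ is $\omega_\boxplus$-orthogonal to every element of $\beta(t)$, testing against vectors with $\phi=0$ forces $\psi_1=t\psi_2$ and testing against $x=0$ forces $y_2=ty_1$, exhibiting the element as a member of $\beta(t)$. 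For smoothness in the Lagrangian Grassmannian I would use the formula $P_{\beta(t)}=\Phi_t(\Phi_t^*\Phi_t)^{-1}\Phi_t^*$; since $\Phi_t^*\Phi_t$ is a polynomial in $t$ and remains uniformly invertible on the compact interval $[0,1]$ (its smallest singular value is bounded below because $\Phi_t$ is injective with controlled norm), the projection depends smoothly on $t$.

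For $\mu(\lambda)$, the natural decomposition is $\mu(\lambda)=\mu_1(\lambda)\oplus\widetilde\mu_2(\lambda)$, where $\mu_i(\lambda)\subset\cH$ is the Cauchy data space of $K_i^\lambda$ (using the outward normal $\nu_i$) and $\widetilde\mu_2$ uses $-\p_{\nu_2}$ in the second slot. A direct sum of Lagrangians is Lagrangian in $\cH\oplus\cH$ equipped with $\omega\oplus(-\omega)$, so it suffices to show each $\mu_i(\lambda)$ is Lagrangian in $\cH$. Isotropy follows from Green's identity \eqref{Green}: for $u_i,v_i\in K_i^\lambda$ the interior terms cancel because both solve $Lu=\lambda u$, and the boundary terms on $\pM\cap\pO_i$ vanish because either the trace or the conormal derivative of each factor is zero there, leaving $\int_\Sigma (u_i\,\p_{\nu_i}v_i - v_i\,\p_{\nu_i}u_i)=0$, which is precisely $\omega$-isotropy of the Cauchy data. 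Maximality is the content of Proposition 2.2 in \cite{CJM14} (an application of the unique continuation principle of \cite{BR12}): any Cauchy data $\omega$-orthogonal to all of $\mu_i(\lambda)$ must arise from an element of $K_i^\lambda$.

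Smoothness of $\lambda\mapsto\mu(\lambda)$ is the main obstacle, since the subspace is defined implicitly via the solution operator for $(L-\lambda)u=0$. My approach is to parameterize $\mu_i(\lambda)$ as a graph over a fixed transverse Lagrangian in $\cH$ and show that the graphing operator depends smoothly on $\lambda$. Concretely, pick $\lambda_0\notin\sigma(L_i^D)\cup\sigma(L_i^N)$ (which exists since the spectra are discrete) and use the Dirichlet-to-Neumann map $\widetilde\Lambda_i(\lambda_0)$ to give a reference splitting $\cH=\Hp\oplus\Hm$ transverse to $\mu_i(\lambda_0)$. For $\lambda$ near any fixed value, the solution operator $(x,\phi)\mapsto u$ from mixed Cauchy data on $\Sigma$ to weak solutions in $K_i^\lambda$ depends analytically on $\lambda$ by the analytic Fredholm theorem applied to $(L-\lambda)$ with the auxiliary boundary conditions on $\pM$; this yields analytic (hence $C^\infty$) dependence of the trace map and therefore of the orthogonal projection $P_{\mu_i(\lambda)}$. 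The same argument applies to $\widetilde\mu_2$, and smoothness of the direct sum projection $P_{\mu(\lambda)}=P_{\mu_1(\lambda)}\oplus P_{\widetilde\mu_2(\lambda)}$ follows. The real technical burden here is justifying the analytic dependence through points where the Dirichlet or Neumann problem on $\Omega_i$ is not invertible; this is handled by the standard trick of regularizing $(L-\lambda)$ by a compact perturbation and using the analytic Fredholm theorem, since the trace map itself remains well-defined via \eqref{trace} even at spectral values of the auxiliary operators.
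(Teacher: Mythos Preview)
Your approach is essentially the same as the paper's: decompose $\mu(\lambda)=\mu_1(\lambda)\oplus\mu_2(\lambda)$ and invoke \cite{CJM14} for the Lagrangian and smoothness properties of each factor, then handle $\beta(t)$ by direct inspection. The paper's proof simply cites Proposition~3.5 of \cite{CJM14} for everything about $\mu_i(\lambda)$, whereas you unpack the isotropy via Green's identity and sketch the smoothness via analytic Fredholm theory; this is a reasonable expansion of what that proposition contains.

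One point to correct: you attribute maximality of $\mu_i(\lambda)$ to Proposition~2.2 of \cite{CJM14} and to unique continuation. That proposition (as described in the present paper) gives injectivity of the trace map $K_i^\lambda\to\cH$, which is not the same as maximal isotropy. Showing that every $(f,g)$ in the $\omega$-annihilator of $\mu_i(\lambda)$ actually arises as Cauchy data of some $u\in K_i^\lambda$ is a surjectivity/duality statement, typically proved by solving an auxiliary boundary value problem (or by a Fredholm dimension argument once one exhibits a transverse Lagrangian), and this is part of Proposition~3.5 rather than~2.2. Your overall strategy is sound, but the citation and the underlying mechanism for maximality should be adjusted.
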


\begin{proof}
The space $\mu(\lambda)$ of two-sided Cauchy data can be decomposed as $\mu(\lambda) = \mu_1(\lambda) \oplus \mu_2(\lambda)$, where $\mu_1$ and $\mu_2$ are the spaces of Cauchy data for weak solutions to $Lu = \lambda u$ on $\Omega_1$ and $\Omega_2$, respectively. It was shown in Proposition 3.5 of \cite{CJM14} that $\lambda \mapsto \mu_1(\lambda)$ and $\lambda \mapsto \mu_2(\lambda)$ are smooth curves in $\Lambda(\cH)$, hence their sum $\mu(\lambda)$ is a smooth curve in $\Lambda(\cH_\boxplus)$.

The fact that $\beta(t)$ is Lagrangian follows from a direct computation, and the regularity of $t \mapsto \beta(t)$ is immediate from the definition.
\end{proof}

\begin{lemma}\label{propFredholm}
For $(\lambda,t) \in \bbR \times [0,1]$, $\mu(\lambda)$ and $\beta(t)$ comprise a Fredholm pair.
\end{lemma}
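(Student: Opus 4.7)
My plan is to reduce the Fredholm question for the Lagrangian pair $(\mu(\lambda),\beta(t))$ to standard properties of a self-adjoint elliptic operator $L^t$ obtained by imposing $t$-dependent transmission conditions on $\Sigma$.

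For each $t\in[0,1]$ I would define $L^t$ on $L^2(M)=L^2(\Omega_1)\oplus L^2(\Omega_2)$ via the closed, bounded-below, symmetric sesquilinear form
\[
q^t(u,v)=\int_{\Omega_1}\langle\nabla u_1,\nabla v_1\rangle+\int_{\Omega_2}\langle\nabla u_2,\nabla v_2\rangle+\int_M V uv,
\]
whose form domain is the closed subspace of $H^1(\Omega_1)\oplus H^1(\Omega_2)$ cut out by the outer boundary conditions on $\partial M$ together with the single essential trace constraint $u_2|_\Sigma=t\,u_1|_\Sigma$. By Kato's representation theorem $L^t$ is self-adjoint, and by Rellich compactness it has compact resolvent; a brief integration by parts identifies the remaining boundary condition produced variationally as $\partial_{\nu_1}u_1|_\Sigma+t\,\partial_{\nu_2}u_2|_\Sigma=0$. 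The endpoints recover $L^0=L^N_1\oplus L^D_2$ and $L^1=L^G$.

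Next, exactly as in the proof of Lemma~\ref{intersection}, the unique continuation principle furnishes a linear bijection $\ker(L^t-\lambda)\to\mu(\lambda)\cap\beta(t)$, so the intersection is finite-dimensional. For the sum, a direct linear computation shows that $(f_1,g_1,f_2,g_2)\in\mu(\lambda)+\beta(t)$ if and only if the transmission data $(f_2-tf_1,\,g_1-tg_2)\in\cH$ lies in the image of the bounded map $\Theta_t\colon K_1^\lambda\oplus K_2^\lambda\to\cH$ sending $(u_1,u_2)$ to $(u_2|_\Sigma-tu_1|_\Sigma,\,\partial_{\nu_1}u_1|_\Sigma+t\partial_{\nu_2}u_2|_\Sigma)$. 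The closed-range property of the self-adjoint operator $L^t-\lambda$, together with standard elliptic regularity of weak solutions, implies that $\Theta_t$ has closed range of finite codimension (equal to $\dim\ker(L^t-\lambda)$), and pulling back through the bounded surjection $(f_1,g_1,f_2,g_2)\mapsto(f_2-tf_1,g_1-tg_2)$ then shows $\mu(\lambda)+\beta(t)$ is closed of finite codimension in $\cH_\boxplus$.

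The main obstacle is verifying self-adjointness (as opposed to mere symmetry) of the unconventional operator $L^t$ for intermediate $t\in(0,1)$: the trace and flux matching conditions scale by different powers of $t$, so direct verification on the operator domain is awkward. The cleanest remedy is to work exclusively with the quadratic form $q^t$ on its $t$-dependent closed form domain, at which point Kato's representation theorem supplies self-adjointness automatically and the flux condition $\partial_{\nu_1}u_1+t\,\partial_{\nu_2}u_2=0$ emerges as the natural boundary condition associated to the form.
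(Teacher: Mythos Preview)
Your approach is correct and genuinely different from the paper's. The paper proceeds analytically: it invokes the equivalence (Furutani, Prop.~2.27) between the Fredholm pair condition and Fredholmness of $P_\beta^\perp|_\mu$, then applies Peetre's lemma to reduce everything to an energy estimate
\[
\|(u_1,u_2)\|_{H^1}\le C\big(\|P_\beta^\perp\tr(u_1,u_2)\|_{\cH_\boxplus}+\|(u_1,u_2)\|_{L^2}\big),
\]
which is verified by an explicit computation of the boundary integral $\int_\Sigma u_1\partial_{\nu_1}u_1+u_2\partial_{\nu_2}u_2$ in terms of the decomposition $\tr(u_1,u_2)=P_\beta\tr+P_\beta^\perp\tr$. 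Your route is operator-theoretic: you build a one-parameter family $L^t$ of self-adjoint transmission problems via Kato's form theorem, identify $\mu(\lambda)\cap\beta(t)\cong\ker(L^t-\lambda)$, and then read off closedness and finite codimension of $\mu(\lambda)+\beta(t)$ from the Fredholm alternative for $L^t-\lambda$. Each viewpoint has its payoff: the paper's estimate is reusable (essentially the same computation reappears in Section~\ref{secCrossing} for the deformed family $\beta(s,t)$), while your construction of $L^t$ is of independent interest, since it realizes the whole homotopy $t\mapsto\beta(t)$ as an honest family of self-adjoint operators interpolating between $L^N_1\oplus L^D_2$ and $L^G$. One caution: the sentence ``closed-range of $L^t-\lambda$ together with elliptic regularity implies $\Theta_t$ has closed range of finite codimension'' is the only real gap in your write-up. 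It is true, but it requires lifting $h$ to kill the essential trace jump, recasting the remaining problem variationally on the form domain, and then checking that the solvability obstruction is given by finitely many continuous linear functionals $(h,k)\mapsto\int_\Sigma\psi_1 k-\int_\Sigma h\,\partial_{\nu_2}\psi_2$ indexed by $\psi\in\ker(L^t-\lambda)$; you should spell this out.
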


\begin{proof}
For convenience we abbreviate $\mu = \mu(\lambda)$ and $\beta = \beta(t)$. Let $P_\beta\colon \cH_\boxplus \to \cH_\boxplus$ denote the orthogonal projection onto $\beta$, and $P_\beta^\bot = I - P_\beta$ the complementary projection. By Proposition 2.27 of \cite{F04}, $\mu$ and $\beta$ comprise a Fredholm pair if and only if the restriction $P_\beta^\bot \big|_\mu \colon \mu \to \cH_\boxplus$ is a Fredholm operator.

By Peetre's lemma (Lemma 3 of \cite{P61}), it suffices to find a compact embedding $\iota \colon \mu \to Y$ and a positive constant $C$ such that
\[
	\|z\|_{\cH_\boxplus} \leq C \left( \|P_\beta^\bot z\|_{\cH_\boxplus} + \|\iota z\|_Y \right)
\]
for all $z \in \mu$. Since the operator $\tr\colon K_1^\lambda \oplus K_2^\lambda \to \mu$ defined by
\[
	\tr (u_1,u_2) = \left.\left(u_1,\frac{\p u_1}{\p \nu_1},u_2,-\frac{\p u_2}{\p \nu_2} \right)\right|_\Sigma
\]
is boundedly invertible by Lemmas 3.2 and 3.3 of \cite{CJM14}, it suffices to prove
\[
	\|(u_1,u_2)\|_{H^1} \leq C \left( \|P_\beta^\bot \tr(u_1,u_2) \|_{\cH_\boxplus} + \|\iota \tr(u_1,u_2)\|_Y \right)
\]
for $u_i \in K_i^\lambda$, where we have defined $\|(u_1,u_2)\|_{H^1}^2 = \|u_1\|_{H^1(\Omega_1)}^2 + \|u_2\|_{H^1(\Omega_2)}^2$.
Defining $Y = L^2(\Omega_1) \oplus L^2(\Omega_2)$ and letting $\iota\colon \mu \to Y$ denote the composition
\[
	\mu \xrightarrow{\tr^{-1}} H^1(\Omega_1) \oplus H^1(\Omega_2) \longhookrightarrow L^2(\Omega_1) \oplus L^2(\Omega_2),
\]
we need to show that
\begin{align}\label{energy}
	\|(u_1,u_2)\|_{H^1} \leq C \left( \|P_\beta^\bot \tr(u_1,u_2) \|_{\cH_\boxplus} + \|(u_1,u_2)\|_{L^2} \right)
\end{align}
for $u_i \in K_i^\lambda$.

To prove \eqref{energy}, we first observe that there is a constant $C > 0$ such that
\begin{align}\label{energy1}
	\|u_i\|_{H^1(\Omega_i)}^2 \leq C \|u_i\|_{L^2(\Omega_i)}^2 + \int_\Sigma u_i \frac{\p u_i}{\p \nu_i}
\end{align}
for $i \in \{1,2\}$ and $u_i \in K^\lambda_i$. Using the definition of $\beta = \beta(t)$ we can write
\begin{align*}
	P_\beta \tr (u_1,u_2) = (x, t\phi, tx, \phi), \quad  P_\beta^\bot \tr (u_1,u_2) = (-ty, \psi, y, -t\psi)
\end{align*}
for some $x,y \in \Hp$ and $\phi, \psi \in \Hm$. 
Therefore
\begin{align*}
	\int_\Sigma u_1 \frac{\p u_1}{\p \nu_1} + u_2 \frac{\p u_2}{\p \nu_2} &= (t\phi + \psi)(x-ty) - (\phi - t\psi)(tx+y) \\
	&= (1+t^2) \left[ \psi(x) - \phi(y)\right] \\
	& \leq \frac{1+t^2}{2} \left( \epsilon \|x\|_{\Hp} + \epsilon^{-1} \|\psi\|_{\Hm} + \epsilon^{-1} \|y\|_{\Hp} + \epsilon \|\phi\|_{\Hm} \right) \\
	&= \frac{\epsilon}{2} \| P_\beta \tr (u_1,u_2) \|_{\cH_\boxplus}^2 + \frac{1}{2\epsilon} \| P_\beta^\bot \tr (u_1,u_2) \|_{\cH_\boxplus}^2
\end{align*}
for any $\epsilon > 0$, hence \eqref{energy1} implies
\[
	\|(u_1,u_2)\|_{H^1}^2 \leq C \|(u_1,u_2)\|_{L^2}^2 + \frac{\epsilon}{2} \| P_\beta \tr (u_1,u_2) \|_{\cH_\boxplus}^2 + \frac{1}{2\epsilon} \| P_\beta^\bot \tr (u_1,u_2) \|_{\cH_\boxplus}^2.
\]
Choosing $\epsilon$ small enough that $\epsilon \| P_\beta \tr (u_1,u_2) \|_{\cH_\boxplus}^2 \leq \|(u_1,u_2)\|_{H^1}$, which is possible by Lemma 3.2 of \cite{CJM14}, the desired estimate \eqref{energy} follows.
\end{proof}

\begin{rem}
The proof of Lemma 3.8 in \cite{CJM14}, in which certain pairs of Lagrangian subspaces are shown to be Fredholm, can be greatly simplified by an application Peetre's lemma as above.
\end{rem}

%
%

The Maslov index counts signed intersections of Lagrangian subspaces and so, in light of Lemma \ref{intersection}, it is not surprising that the Maslov indices of $\mu(\lambda)$ with respect to $\beta(0)$ and $\beta(1)$ are related to the Morse indices of the corresponding boundary value problems. This is a consequence of the fact that $\mu(\lambda)$ always passes through $\beta(t_0)$ in the same direction. This is proved by computing the crossing form---a symmetric bilinear form associated to a nontrivial intersection---and showing that it is sign definite (c.f. the proof of Lemma 4.2 in \cite{CJM14}). The necessary properties of crossing forms can be found in Appendix B of \cite{CJM14}; see also \cite{F04} for a more thorough treatment.

\begin{prop} \label{lambdaMaslov}
For $|\lambda_\infty|$ sufficiently large we have
\[
	\Mas(\mu(\lambda); \beta(0)) = -\Mor(L^N_1) - \Mor(L^D_2)
\]
and
\[
	\Mas(\mu(\lambda); \beta(1)) = -\Mor(L^G),
\]
where the Maslov index is computed over the interval $[\lambda_\infty,0]$.
\end{prop}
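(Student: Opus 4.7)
The strategy is to combine Lemma \ref{intersection} with a crossing-form computation: each intersection of the path $\mu(\lambda)$ with the fixed reference $\beta(t_0)$ (for $t_0 \in \{0,1\}$) corresponds to an eigenvalue of the relevant operator, and the crossing form at each such intersection is negative definite, so each crossing contributes $-1$ (times multiplicity) to the Maslov index.

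First I would fix $\lambda_\infty$ as in Lemma \ref{uniformbound}, so $\mu(\lambda_\infty) \cap \beta(t) = \{0\}$ and there is no left-endpoint contribution. For $\lambda \in (\lambda_\infty, 0)$, Lemma \ref{intersection} identifies the crossings: $\mu(\lambda) \cap \beta(0) \neq \{0\}$ exactly at negative eigenvalues of $L^N_1$ and $L^D_2$, while $\mu(\lambda) \cap \beta(1) \neq \{0\}$ exactly at negative eigenvalues of $L^G$, and in both cases the intersection dimension equals the total multiplicity of the corresponding eigenspace(s). By unique continuation (the basis of $\mu(\lambda) = \mu_1(\lambda) \oplus \mu_2(\lambda)$), we may parametrize intersection vectors by the eigenfunctions themselves.

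Next I would compute the crossing form $Q_{\lambda_0}$. Given $z_0 \in \mu(\lambda_0) \cap \beta(t_0)$ coming from $u_i \in K_i^{\lambda_0}$ which solve $Lu_i = \lambda_0 u_i$ with the appropriate conditions on $\Sigma$, extend to $z(\lambda) \in \mu(\lambda)$ by choosing a smooth family $u_i(\lambda) \in K_i^\lambda$ through $u_i$, and compute $Q_{\lambda_0}(z_0) = \omega_\boxplus(z_0, \dot z(\lambda_0))$. Differentiating the eigenvalue equation gives $\Delta_g \dot u_i = -u_i + (V-\lambda_0)\dot u_i$ at $\lambda = \lambda_0$. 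Applying Green's identity \eqref{Green} on each $\Omega_i$ and using that the outer boundary conditions defining $K_i^\lambda$ are $\lambda$-independent (so the $\partial M \cap \partial\Omega_i$ contributions cancel), the $\Sigma$-boundary pairings collapse to a volume integral and one obtains
\[
Q_{\lambda_0}(z_0) = -\int_{\Omega_1} u_1^2 \,-\, \int_{\Omega_2} u_2^2,
\]
where the sign $\omega_\boxplus = \omega \oplus(-\omega)$ conspires with $\nu_1 = -\nu_2$ on $\Sigma$ to make both terms add rather than cancel. This is negative definite on the intersection, mirroring Lemma 4.2 of \cite{CJM14}.

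Since every crossing is therefore a regular crossing of constant (negative) sign, each contributes exactly $-\dim\bigl[\mu(\lambda_0) \cap \beta(t_0)\bigr]$ to the Maslov index. Summing over $\lambda_0 \in (\lambda_\infty, 0)$ and invoking Lemma \ref{intersection} yields the two claimed identities. The hard part will be twofold: (i) justifying the Green's-identity manipulation on the Lipschitz hypersurface $\Sigma$ using only the distributional traces from \eqref{trace} (and, consequently, interpreting $\dot z(\lambda_0)$ as a genuine element of $\cH_\boxplus$, which requires smoothness of the trace map $\lambda \mapsto z(\lambda)$); and (ii) handling the right endpoint $\lambda = 0$, where one must invoke the Maslov-index convention (e.g.\ that of Appendix B of \cite{CJM14}) so that a possible crossing at $\lambda = 0$ is not counted, producing $\Mor$ rather than $\Mor_0$ on the right-hand side.
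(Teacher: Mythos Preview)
Your proposal is correct and follows essentially the same approach as the paper: compute the crossing form via Green's identity to show it equals $-\|u_1\|_{L^2(\Omega_1)}^2 - \|u_2\|_{L^2(\Omega_2)}^2$, hence is negative definite, then invoke Lemma~\ref{intersection} and Lemma~\ref{uniformbound} to convert the signed count of crossings into the Morse indices. The paper organizes the Green's-identity step through the bilinear form $\Phi(u,v) = \int_{\Omega_i}[g(\nabla u,\nabla v) + Vuv]$ rather than differentiating the PDE directly, but this is cosmetic; your concerns (i) and (ii) are handled in the paper by deferring to \cite{CJM14}.
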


\begin{proof} We use a crossing form computation to show that, for fixed $t_0 \in [0,1]$, every intersection of $\mu(\lambda)$ with $\beta(t_0)$ is negative definite. This implies
\begin{align*}
	\Mas(\mu(\lambda); \beta(t_0)) &= - \sum_{\lambda_\infty \leq \lambda < 0} \dim \left( \mu(\lambda) \cap \beta(t_0)\right) \\
	&=  -\sum_{\lambda < 0} \dim \left( \mu(\lambda) \cap \beta(t_0)\right),
\end{align*}
where in the second equality we have used Lemma \ref{uniformbound}, and the result then follows from Lemma \ref{intersection}.

To prove monotonicity, we assume there is a crossing at some $\lambda_* \in \bbR$. Then there exist differentiable paths of functions $\lambda \mapsto u_i(\lambda) \in H^1(\Omega_i)$ such that $u_i(\lambda) \in K_i^\lambda$ for $|\lambda - \lambda_*| \ll 1$, hence
\[
	z(\lambda) = \left.\left(u_1(\lambda), \frac{\p u_1}{\p \nu_1}(\lambda), u_2(\lambda), -\frac{\p u_2}{\p \nu_2}(\lambda) \right)\right|_\Sigma
\]
defines a differentiable curve in $\cH_\boxplus$, with $z(\lambda) \in \mu(\lambda)$ and $z(\lambda_*) \in \mu(\lambda_*) \cap \beta(t_0)$. Since $\omega_\boxplus = \omega \oplus (-\omega)$, the crossing form is given by
\begin{align}\label{crossing12}
	\omega_\boxplus \left(z, \frac{dz}{d\lambda}\right) &= \omega \left(z_1, \frac{dz_1}{d\lambda}\right) - \omega \left(z_2, \frac{dz_2}{d\lambda}\right).
\end{align}

To compute the first term on the right-hand side of \eqref{crossing12}, we define the quadratic form
\[
	\Phi(u,v) = \int_{\Omega_1} \left[ g(\nabla u, \nabla v) + Vuv \right]
\]
for $u,v \in H^1(\Omega_1)$. Since $u_1(\lambda) \in K_1^\lambda$, Green's first identity \eqref{Green} implies
\begin{align}\label{PhiGreen}
	\Phi(u_1(\lambda),v) = \lambda \left< u_1(\lambda),v\right>_{L^2(\Omega_1)} + \int_{\Sigma} v \frac{\p u_1}{\p \nu_1}
\end{align}
for any $v \in H^1(\Omega_1)$ with $\left.v\right|_{\Sigma_D \cap \p \Omega_1} = 0$. Choosing $v = du_1/d\lambda$, we obtain
\[
	\Phi\left(u_1,\frac{du_1}{d\lambda} \right) = \lambda \left< u_1,\frac{du_1}{d\lambda} \right>_{L^2(\Omega_1)} + \int_{\Sigma} \frac{du_1}{d\lambda}  \frac{\p u_1}{\p \nu_1}.
\]
On the other hand, differentiating \eqref{PhiGreen} with respect to $\lambda$ and then choosing $v = u_1$ yields
\[
	\Phi\left(\frac{du_1}{d\lambda}, u_1 \right) = \| u_1\|_{L^2(\Omega_1)}^2  + \lambda \left< \frac{du_1}{d\lambda}, u_1 \right>_{L^2(\Omega_1)} + \int_{\Sigma} u_1 \frac{d}{d\lambda} \frac{\p u_1}{\p \nu_1}.
\]
Using the symmetry of $\Phi$ and recalling the definition of $\omega$, it follows that
\[
	\left.\omega \left(z_1, \frac{dz_1}{d\lambda}\right)\right|_{\lambda = \lambda_*} = \int_{\Sigma} \left[ u_1 \frac{d}{d\lambda} \frac{\p u_1}{\p \nu_1} - \frac{du}{d\lambda_1} \frac{\p u_1}{\p \nu_1} \right] 
	= - \| u_1(\lambda_*)\|_{L^2(\Omega_1)}^2.
\]

The second term on the right-hand side of \eqref{crossing12} is computed similarly, and we obtain for the crossing form
\[
	\left.\omega_\boxplus \left(z, \frac{dz}{d\lambda}\right)\right|_{\lambda = \lambda_*} = 
	- \| u_1(\lambda_*) \|_{L^2(\Omega_1)}^2 - \| u_2(\lambda_*) \|_{L^2(\Omega_2)}^2,
\]
which is strictly negative.
\end{proof}

We are now ready to prove Theorem \ref{MorMas}.

\begin{proof}[Proof of Theorem \ref{MorMas}]

The unitary group acts transitively on $\Lambda(\cH_\boxplus)$ and, by Theorem 2.14 of \cite{F04}, gives it the structure of a principal fiber bundle, so there exists a continuous family of unitary operators $U(t)\colon \cH_\boxplus \to \cH_\boxplus$ such that $\beta(t)= U(t) \beta(0)$. 
 We define a homotopy $[\lambda_\infty,0] \times [0,1] \to \mathcal{F} \Lambda_{\beta(0)} (\cH_\boxplus)$ by $(\lambda,t) \mapsto U(t)^{-1}\mu(\lambda)$. The invariance of the Maslov index under unitary transformations implies
\[
	\Mas \left(U(t_0)^{-1}\mu(\lambda); \beta(0) \right) = \Mas(\mu(\lambda); \beta(t_0))
\]
and
\[
	\Mas \left(U(t)^{-1}\mu(\lambda_0); \beta(0) \right) = -\Mas(\beta(t); \mu(\lambda_0))
\]
for any fixed $t_0$ and $\lambda_0$. The image of the boundary of $[\lambda_\infty,0] \times [0,1]$ is null homotopic in $\mathcal{F} \Lambda_{\beta(0)} (\cH_\boxplus)$ and hence has zero Maslov index. This implies
\[
	\Mas(\mu(\lambda); \beta(1)) = \Mas(\mu(\lambda);\beta(0)) + \Mas(\beta(t);\mu(\lambda_\infty)) - \Mas(\beta(t);\mu(0)).
\]
The proof follows immediately from the above formula, Lemma \ref{uniformbound} (which implies $\Mas(\beta(t);\mu(\lambda_\infty)) = 0$) and Proposition \ref{lambdaMaslov}.

\end{proof}


\section{The Maslov index of $\beta(t)$}\label{secCrossing}
In this section we prove Theorem \ref{DNcrossing}: if $0 \notin \sigma(L^D_1) \cup \sigma(L^D_2)$, then
\[
	\Mas(\beta(t); \mu(0)) = \Mor_0(\Lambda_1 + \Lambda_2) - \Mor_0(\Lambda_1).
\]
Instead of directly analyzing the crossings of $\beta(t)$ with $\mu(0)$, which may be degenerate, we deform $\beta(t)$ to a nondegenerate path for which the Maslov index can be easily computed, and then appeal to the homotopy invariance of the index.

We first define the (unbounded) Dirichlet-to-Neumann map $\Lambda_i$ for $i \in \{1,2\}$. This is an unbounded operator on $L^2(\Sigma)$ with domain
\[
	\cD(\Lambda_i) = \left\{f \in L^2(\Sigma) : \exists \, u \in K^0_i \text{ such that } \left. u\right|_\Sigma = f \text{ and } \left. \frac{\p u}{\p \nu_i}\right|_\Sigma \in L^2(\Sigma) \right\},
\]
defined by $\Lambda_i f = \left. \frac{\p u}{\p \nu_i}\right|_\Sigma$. Recall from \eqref{Kdef} that $u \in K^0_i$ means that
\[
	\left. u\right|_{\Sigma_D \cap \pO_i} = 0, \quad  \left. \frac{\p u}{\p \nu}\right|_{\Sigma_N \cap \pO_i} = 0
\]
and $u$ is a weak solution to the equation $Lu = 0$ in $\Omega_i$.

We can also view the Dirichlet-to-Neumann maps as \emph{bounded} operators
\[
	\tL_1, \tL_2 \colon \Hp \longrightarrow \Hm.
\]
We relate the spectrum of the unbounded operator $\Lambda_i$ to its bounded counterpart $\tL_i$. Let $\cJ \colon\Hp\hookrightarrow\Hm$ denote the compact inclusion. 

%

\begin{lemma}\label{DNdomains}
Let $i \in \{1,2\}$ and suppose $0 \notin \sigma(L^D_i)$. Then $s \in \sigma(\Lambda_i)$ if and only if there exists $f \in \Hp$ such that $(\tL_i -s \cJ) f = 0$.
\end{lemma}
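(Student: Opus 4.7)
The plan is to translate both sides of the stated equivalence into a common language by unpacking the definitions of the unbounded operator $\Lambda_i$ on $L^2(\Sigma)$ and the bounded operator $\tL_i\colon \Hp \to \Hm$. The hypothesis $0 \notin \sigma(L^D_i)$ enters as uniqueness of the Dirichlet solution in $K_i^0$ for any prescribed boundary trace in $\Hp$, and the whole argument amounts to matching this lift against the $L^2$-regularity requirement built into $\cD(\Lambda_i)$.

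For the forward direction I would first invoke the standard fact that $\Lambda_i$ is a self-adjoint, bounded-below operator with compact resolvent, so $\sigma(\Lambda_i)$ consists of isolated eigenvalues of finite multiplicity. (This discreteness is implicit already in the statement of Theorem~\ref{thmFriedlander}.) Thus if $s\in\sigma(\Lambda_i)$, one gets a nonzero $f\in\cD(\Lambda_i)$ and a corresponding $u\in K_i^0$ with $u|_\Sigma=f$ and $\partial_{\nu_i}u|_\Sigma=sf$ as an equality in $L^2(\Sigma)$. Since $u\in H^1(\Omega_i)$, the trace $f$ automatically lies in $\Hp$, and by the very definition of $\tL_i$ one has $\tL_i f=\partial_{\nu_i}u|_\Sigma=sf=s\cJ f$ in $\Hm$.

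For the converse, assume $f\in\Hp$ is nonzero with $(\tL_i-s\cJ)f=0$. Because $0\notin\sigma(L^D_i)$, there is a unique $u\in K_i^0$ satisfying $u|_\Sigma=f$, and $\tL_i f=\partial_{\nu_i}u|_\Sigma$ in $\Hm$. The assumed identity then reads $\partial_{\nu_i}u|_\Sigma=sf$ in $\Hm$. The key observation is that on the compact hypersurface $\Sigma$ the Sobolev embedding $\Hp\hookrightarrow L^2(\Sigma)$ gives $sf\in L^2(\Sigma)$, so the equality actually holds in $L^2(\Sigma)$. Hence $f\in\cD(\Lambda_i)$ with $\Lambda_i f=sf$, and $s\in\sigma(\Lambda_i)$.

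The main obstacle, to the extent there is one, is purely one of bookkeeping: keeping track of which equality lives in which space ($\Hm$, $L^2(\Sigma)$, or otherwise). Once $\cJ$ is read as the compact inclusion $\Hp\hookrightarrow\Hm$ and one remembers that $\Hp\subset L^2(\Sigma)$, the two characterizations of the eigenvalue condition collapse onto each other and nothing beyond the above is required.
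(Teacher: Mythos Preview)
Your proof is correct and follows essentially the same approach as the paper: both directions are handled by unpacking the definitions of $\Lambda_i$ and $\tL_i$, with the key point in the reverse direction being that $\cJ f$ (equivalently $sf$) lies not merely in $\Hm$ but in $\Hp\subset L^2(\Sigma)$, so the domain condition for $\Lambda_i$ is met. Your explicit remark that $\sigma(\Lambda_i)$ consists of eigenvalues is a helpful clarification the paper leaves implicit.
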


\begin{proof}
First suppose that $s \in \sigma(\Lambda_i)$, so there exists $u_i \in K_i^0$ with
\[
	\left. \frac{\p u_i}{\p \nu_i}\right|_\Sigma = s \left. u_i \right|_\Sigma \in L^2(\Sigma).
\]
Then $f := \left. u_i \right|_\Sigma$ is contained in $\Hp$ and satisfies $\tL_i f = s \mathcal{J} f$, as required.

On the other hand, suppose $u_i \in K_i^0$ satisfies
\[
	\left. \frac{\p u_i}{\p \nu_i}\right|_\Sigma = s \mathcal{J} \left(\left. u_i \right|_\Sigma\right) \in \Hm.
\]
This implies $\left. \frac{\p u_i}{\p \nu_i}\right|_\Sigma \in \Hp$, hence $f := \left. u_i \right|_\Sigma$ is contained in $\cD(\Lambda_i)$ and satisfies $\Lambda_i f = sf$.
\end{proof}

%

For $s \leq 0$ we consider the two-parameter family of boundary data
\begin{align}\label{betast}
	\beta(s,t) = \left\{ (x, t\phi + s \mathcal{J} x, tx, \phi) : x \in \Hp, \phi \in \Hm \right\}.	
\end{align}
When $s=0$ this is just $\beta(t)$. It is easy to see that
\[
	\beta(s,t)^\bot = \left\{ (-ty - s \mathcal{J}^*\psi, \psi, y, -t\psi) : y \in \Hp, \psi \in \Hm \right\}.	
\]
This modification of $\beta$ yields a homotopy that relates the Maslov index of $\beta(t)$ to the Morse indices of the Dirchlet-to-Neumann maps.  This relies crucially on a certain monotonicity with respect to $s$, which is shown in Proposition \ref{betamorse}.

\begin{lemma}\label{betaLag}
If $(s,t) \in (-\infty,0] \times [0,1]$, then $\beta(s,t)$ is a Lagrangian subspace of $\cH_\boxplus$, and the map $(s,t) \mapsto \beta(s,t)$ is smooth.
\end{lemma}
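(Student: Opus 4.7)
The plan is to verify the two Lagrangian conditions—isotropy and maximality—by a direct computation exploiting the fact that the pairing $\mathcal{J}x(y) = \langle x,y \rangle_{L^2(\Sigma)}$ is symmetric, and then to deduce smoothness from the fact that $\beta(s,t)$ is the range of a jointly smooth injective bounded operator with closed range.

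For isotropy, I would take two arbitrary elements $(x,t\phi+s\mathcal{J}x,tx,\phi)$ and $(x',t\phi'+s\mathcal{J}x',tx',\phi')$ in $\beta(s,t)$ and expand $\omega_\boxplus = \omega \oplus (-\omega)$. The purely $t$-dependent contributions $t\phi'(x) - t\phi(x') - \phi'(tx) + \phi(tx')$ cancel identically, leaving the residue $s\bigl(\mathcal{J}x'(x) - \mathcal{J}x(x')\bigr)$. Because the $H^{1/2}$--$H^{-1/2}$ duality extends the $L^2(\Sigma)$ inner product, $\mathcal{J}x'(x) = \langle x',x\rangle_{L^2} = \langle x,x'\rangle_{L^2} = \mathcal{J}x(x')$, so the residue vanishes and $\beta(s,t)$ is isotropic.

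For maximality I would compute $\beta(s,t)^{\omega_\boxplus}$ by hand: a general $(a,\alpha,b,\gamma) \in \cH_\boxplus$ lies in the complement iff
\[
	\phi(ta-b) + (t\gamma - \alpha + s\mathcal{J}a)(x) = 0
\]
for every $(x,\phi) \in \Hp \oplus \Hm$. Letting $\phi$ and $x$ vary independently forces $b = ta$ and $\alpha = t\gamma + s\mathcal{J}a$, which says exactly that $(a,\alpha,b,\gamma) \in \beta(s,t)$. Hence $\beta(s,t) = \beta(s,t)^{\omega_\boxplus}$ and the space is Lagrangian; note this argument did not use the sign of $s$, so the Lagrangian property in fact holds for all $s \in \bbR$.

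For smoothness, define $T(s,t) \colon \Hp \oplus \Hm \to \cH_\boxplus$ by $T(s,t)(x,\phi) = (x, t\phi+s\mathcal{J}x, tx, \phi)$. This map depends linearly (hence $C^\infty$) on $(s,t)$, and reading off the first and fourth components shows it is injective with a bounded left inverse, so its range $\beta(s,t)$ is closed. The standard formula for the orthogonal projection onto the range of an injective bounded operator with closed range gives $P_{\beta(s,t)} = T(s,t)\bigl(T(s,t)^*T(s,t)\bigr)^{-1}T(s,t)^*$; since inversion and composition are smooth operations on the invertible bounded operators, $(s,t) \mapsto P_{\beta(s,t)}$ is smooth in operator norm, which is the required notion of smoothness for a curve in $\Lambda(\cH_\boxplus)$.

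The main (and really only) subtlety is the first step: one must recognize that the chosen symplectic form $\omega$ is built from a duality that restricts to the symmetric $L^2(\Sigma)$ inner product on $\Hp$, since without this symmetry the $s$-term in the isotropy calculation would survive. Everything else is a mechanical unwinding of definitions.
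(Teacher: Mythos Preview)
Your proof is correct but takes a different route from the paper. The paper exploits the local chart structure of the Lagrangian Grassmannian: it writes $\beta(s,t)$ as the graph over the fixed Lagrangian $\beta(0,0)$ of the operator $A(s,t)(x,0,0,\psi) = -\bigl(R^{-1}(t\psi + s\mathcal{J}x),0,0,tRx\bigr)$, so that both the Lagrangian property and smoothness reduce to the single verification that $A(s,t)$ is a smooth family of \emph{selfadjoint} operators on $\beta(0,0)$; the selfadjointness in turn reduces to that of $R^{-1}\mathcal{J}$, which is proved from the identity $\langle R^{-1}\mathcal{J}f,g\rangle_{H^{1/2}} = \int_\Sigma fg$. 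You instead verify isotropy and coisotropy by hand and then establish smoothness separately via the projection formula $P = T(T^*T)^{-1}T^*$ for the parametrizing map $T(s,t)$. Both arguments hinge on exactly the same fact---the symmetry $\mathcal{J}x'(x) = \mathcal{J}x(x')$, equivalently the selfadjointness of $R^{-1}\mathcal{J}$---which you correctly single out as the only nontrivial point. Your approach is more self-contained (no appeal to the graph chart of $\Lambda(\cH_\boxplus)$), while the paper's is shorter once that chart is in hand and makes the connection to the standard smooth structure on the Grassmannian more transparent.
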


\begin{proof} Since $\beta(0,0)$ is Lagrangian, it suffices to find a smooth family of selfadjoint operators $A(s,t)\colon \beta(0,0) \to \beta(0,0)$ such that $\beta(s,t)$ is the graph of $A(s,t)$, i.e.
\[
	\beta(s,t) = \left\{ z +  J_\boxplus A(s,t)z : z \in \beta(0,0) \right\},
\]
where
\[
J_\boxplus (x,\phi,y,\psi) = (R^{-1} \phi, - Rx, -R^{-1} \psi, R y)
\]
and
\[
R \colon \Hp \to \Hm  \cong (\Hp)^*
\] 
is the Riesz duality isomorphism (cf. equation (17) in \cite{CJM14}).

It suffices to choose
\[
	A(s,t)(x,0,0,\psi) = - \left(R^{-1}(t\psi + s \mathcal{J} x), 0, 0, tRx \right),
\]
which is selfadjoint because the composition $R^{-1} \circ \mathcal{J} \colon\Hp\to\Hp$ is selfadjoint.

To prove the selfadjointness of $R^{-1} \circ \mathcal{J}$ we use the identity $R^{-1} = R^*$ to compute
\[
	\left<R^{-1}  \mathcal{J} f, g\right>_{\Hp} = \left< \mathcal{J} f, Rg\right>_{\Hm} = ( \mathcal{J} f)g = \int_\Sigma fg
\]
for any $f,g \in \Hp$, where $( \mathcal{J} f)g$ denotes the action of the functional $ \mathcal{J} f \in \Hm$ on $g \in \Hp$. Since the right-hand side of the above equality is symmetric in $f$ and $g$, we obtain $\left<R^{-1}  \mathcal{J} f, g\right>_{\Hp} = \left<R^{-1}  \mathcal{J} g, f\right>_{\Hp}$ as required.
\end{proof}

\begin{lemma}
If $(s,t) \in (-\infty,0] \times [0,1]$, then $\beta(s,t)$ and $\mu(0)$ are a Fredholm pair.
\end{lemma}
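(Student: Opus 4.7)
The plan is to mimic the proof of Lemma \ref{propFredholm}, adapting the Peetre's-lemma argument to the two-parameter family $\beta(s,t)$. As before, $\mu(0)$ and $\beta(s,t)$ form a Fredholm pair if and only if the restricted projection $P_\beta^\bot|_{\mu(0)}\colon \mu(0) \to \cH_\boxplus$ is Fredholm, and by Peetre's lemma it suffices to establish the energy estimate
\[
\|(u_1,u_2)\|_{H^1} \leq C\bigl( \|P_\beta^\bot \tr(u_1,u_2)\|_{\cH_\boxplus} + \|(u_1,u_2)\|_{L^2} \bigr)
\]
for $u_i \in K_i^0$. This reduction uses the trace isomorphism $\tr\colon K_1^0 \oplus K_2^0 \to \mu(0)$ of Lemmas 3.2--3.3 of \cite{CJM14} together with the compact Sobolev embedding $H^1(\Omega_1)\oplus H^1(\Omega_2) \hookrightarrow L^2(\Omega_1)\oplus L^2(\Omega_2)$, exactly as in Lemma \ref{propFredholm}.

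The next step is to expand the orthogonal decomposition $\tr(u_1,u_2) = P_\beta \tr(u_1,u_2) + P_\beta^\bot \tr(u_1,u_2)$ explicitly, writing $P_\beta \tr(u_1,u_2) = (x, t\phi + s\cJ x, tx, \phi)$ and $P_\beta^\bot \tr(u_1,u_2) = (-ty - s\cJ^*\psi, \psi, y, -t\psi)$ for some $x,y \in \Hp$ and $\phi,\psi \in \Hm$. Reading off the boundary values of $u_1, u_2$ from the first and third slots and the normal derivatives from the second and fourth, one would compute $\int_\Sigma \bigl[u_1 \frac{\p u_1}{\p \nu_1} + u_2 \frac{\p u_2}{\p \nu_2}\bigr]$ directly. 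The result reproduces the term $(1+t^2)[\psi(x) - \phi(y)]$ already handled in Lemma \ref{propFredholm}, plus additional terms linear and quadratic in $s$ of the form $s(\cJ x)(x)$, $-s\,\psi(\cJ^*\psi)$, $-ts[(\cJ x)(y) + \phi(\cJ^*\psi)]$, and $-s^2 (\cJ x)(\cJ^*\psi)$.

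The main technical step, and the only new ingredient beyond the $s=0$ case, is bounding these extra terms. Each involves a single application of $\cJ\colon \Hp \to \Hm$ or its Hilbert space adjoint $\cJ^*\colon \Hm \to \Hp$, both of which are bounded, so by Cauchy--Schwarz with a free parameter $\epsilon$ each term is controlled by $\epsilon(\|x\|_\Hp^2 + \|\phi\|_\Hm^2) + C_\epsilon(\|y\|_\Hp^2 + \|\psi\|_\Hm^2)$. Combining this with the standard a priori bound $\|u_i\|_{H^1}^2 \leq C\|u_i\|_{L^2}^2 + \int_\Sigma u_i \frac{\p u_i}{\p \nu_i}$ and the elementary inequalities $\|x\|_\Hp^2 + \|\phi\|_\Hm^2 \leq \|P_\beta \tr(u_1,u_2)\|_{\cH_\boxplus}^2$ and $\|y\|_\Hp^2 + \|\psi\|_\Hm^2 \leq \|P_\beta^\bot \tr(u_1,u_2)\|_{\cH_\boxplus}^2$, then choosing $\epsilon$ small enough to absorb the $P_\beta$ contribution into the $H^1$ norm (via boundedness of $\tr$), closes the estimate exactly as in Lemma \ref{propFredholm}. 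I expect the sign restriction $s \leq 0$ to play no role here---it will surely be invoked later for the monotonicity/crossing-form computation, since $s(\cJ x)(x) = s\|x\|_{L^2(\Sigma)}^2$ is nonpositive precisely when $s \leq 0$---so the Fredholm conclusion should in fact be valid for arbitrary $s \in \bbR$.
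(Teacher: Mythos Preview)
Your approach is exactly the paper's: Peetre's lemma plus the same orthogonal decomposition of $\tr(u_1,u_2)$ relative to $\beta(s,t)$, followed by expanding the boundary integral. The structure is fine and the mixed terms are handled correctly.

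There is, however, one term you have mishandled. You assert that each of the new $s$-terms ``is controlled by $\epsilon(\|x\|_{\Hp}^2 + \|\phi\|_{\Hm}^2) + C_\epsilon(\|y\|_{\Hp}^2 + \|\psi\|_{\Hm}^2)$'' via Cauchy--Schwarz with a free parameter. That is false for the diagonal term $s(\cJ x)(x) = s\|x\|_{L^2(\Sigma)}^2$: it contains no factor of $y$ or $\psi$ to pair against, so no $\epsilon$--$C_\epsilon$ splitting is available. All you get from boundedness of $\cJ$ is $s\|x\|_{L^2}^2 \leq s\|\cJ\|\,\|x\|_{\Hp}^2$, with a \emph{fixed} coefficient in front of $\|x\|_{\Hp}^2$ that cannot be made small. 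This is precisely where the paper invokes the hypothesis $s \leq 0$: then $s(\cJ x)(x) \leq 0$ and the term is simply dropped. (By contrast, the other diagonal term $-s\psi(\cJ^*\psi)$ involves only $\psi$, so it already sits on the harmless $P_\beta^\bot$ side and causes no trouble.)

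Consequently your closing remark---that the sign restriction plays no role and the Fredholm conclusion holds for all $s\in\bbR$ by this argument---is not supported. For the lemma as stated the proof goes through once you use $s\leq 0$ at that single step; but the claimed extension to $s>0$ would require a different argument (for instance, observing that $\beta(s,t)$ is a compact perturbation of $\beta(0,t)$ via the compact map $\cJ$, and appealing to stability of the Fredholm pair condition under such perturbations).
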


\begin{proof}
As in the proof of Lemma \ref{propFredholm}, it suffices to have an estimate of the form
\begin{align}\label{stenergy}
	\|(u_1,u_2)\|_{H^1} \leq C \left( \|P_\beta^\bot \tr(u_1,u_2) \|_{\cH_\boxplus} + \|(u_1,u_2)\|_{L^2} \right)
\end{align}
for all $u_i \in K_i^0$.

We first decompose an arbitrary element $\tr (u_1,u_2) \in \mu(0)$ into
\[
	P_\beta \tr(u_1,u_2) = (x, t\phi + s  \mathcal{J} x, tx, \phi), \quad P_\beta^\bot \tr(u_1, u_2) = (-ty - s \mathcal{J}^*\psi, \psi, y, -t\psi) 
\]
for some $x,y \in \Hp$ and $\phi, \psi \in \Hm$, and observe that
\begin{align*}
	\| P_\beta \tr(u_1,u_2) \|^2_{\cH_\boxplus} &\geq \|x\|_{\Hp}^2 + \|\phi\|_{\Hm}^2 \\
	\| P^\bot_\beta \tr(u_1,u_2) \|^2_{\cH_\boxplus} &\geq \|y\|_{\Hp}^2 + \|\psi\|_{\Hm}^2.
\end{align*}
We then compute
\begin{align*}
	\int_\Sigma u_1 \frac{\p u_1}{\p \nu_1} + u_2 \frac{\p u_2}{\p \nu_2} &= (t\phi + s \mathcal{J} x + \psi)(x-ty-s \mathcal{J}^*\psi) - (\phi - t\psi)(tx+y) \\
	&\leq (1+t^2) \left[ \psi(x) - \phi(y)\right] - s\left[  \mathcal{J} x(ty + s \mathcal{J}^*\psi) + t(\phi+\psi)( \mathcal{J}^*\psi)\right]
\end{align*}
using the fact that $s \mathcal{J} x(x) = s \|x\|_{L^2(\Sigma)}^2 \leq 0$.
Using the arithmetric--geometric mean inequality on all but the last term on the right-hand side, we obtain
\begin{align*}
	\int_\Sigma u_1 \frac{\p u_1}{\p \nu_1} + u_2 \frac{\p u_2}{\p \nu_2}
	& \leq C \left( \epsilon \| P_\beta \tr(u_1,u_2) \|^2_{\cH_\boxplus} + \epsilon^{-1} \| P^\bot_\beta \tr(u_1,u_2) \|^2_{\cH_\boxplus} \right) - st \psi( \mathcal{J}^*\psi)
\end{align*}
for some constant $C = C(s,t)$. Finally, we note that $\psi( \mathcal{J}^*\psi) \leq \| \mathcal{J}^*\| \|\psi\|_{\Hm}^2 \leq \| \mathcal{J}^*\| \| P^\bot_\beta \tr(u_1,u_2) \|^2_{\cH_\boxplus}$ and choose $\epsilon$ sufficiently small, and the estimate \eqref{stenergy} follows.
\end{proof}

We next observe that $\beta(s,t)$ and $\mu(0)$ are disjoint for sufficiently negative $s$; this is a consequence of a uniform lower bound on the Dirichlet-to-Neumann operators.

\begin{lemma}\label{slowerbound}
There exists $s_\infty < 0$ such that $\beta(s,t) \cap \mu(0) = \{0\}$ for any $s \leq s_\infty$ and $t \in [0,1]$.
\end{lemma}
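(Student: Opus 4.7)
The strategy is to reduce a nontrivial intersection $\beta(s,t) \cap \mu(0) \neq \{0\}$ to an equation involving the Dirichlet-to-Neumann maps, and then to exploit the fact that $\Lambda_1$ and $\Lambda_2$ are bounded below to rule out such an equation when $s$ is sufficiently negative.

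First I would unpack the definitions: an element of $\mu(0) \cap \beta(s,t)$ corresponds to functions $u_i \in K_i^0$ together with parameters $x \in \Hp$ and $\phi \in \Hm$ satisfying
\[
	u_1|_\Sigma = x, \quad \frac{\p u_1}{\p \nu_1}\bigg|_\Sigma = t\phi + s\mathcal{J}x, \quad u_2|_\Sigma = tx, \quad -\frac{\p u_2}{\p \nu_2}\bigg|_\Sigma = \phi.
\]
The hypothesis $0 \notin \sigma(L^D_1) \cup \sigma(L^D_2)$ (implicit in this section) guarantees that $u_1$ is determined uniquely by $x$ and $u_2$ by $tx$, with $\p u_1/\p \nu_1|_\Sigma = \tL_1 x$ and $\p u_2/\p \nu_2|_\Sigma = t \tL_2 x$. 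Eliminating $\phi$ between the four relations yields the single equation
\[
	(\tL_1 + t^2 \tL_2) x = s \mathcal{J} x \qquad \text{in } \Hm.
\]

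Next I would pair both sides with $x$ in the $\Hp$--$\Hm$ duality to obtain the scalar identity
\[
	\langle \tL_1 x, x \rangle + t^2 \langle \tL_2 x, x \rangle = s \|x\|_{L^2(\Sigma)}^2.
\]
Since $\Lambda_1$ and $\Lambda_2$ are selfadjoint on $L^2(\Sigma)$ with compact resolvent, their spectra accumulate only at $+\infty$, so each is bounded from below. Passing to the form domain (and invoking Lemma \ref{DNdomains} to identify form and spectrum for $\tL_i$) yields a constant $C > 0$ such that $\langle \tL_i x, x \rangle \geq -C \|x\|_{L^2(\Sigma)}^2$ for every $x \in \Hp$ and $i \in \{1,2\}$. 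For $t \in [0,1]$ this gives
\[
	s \|x\|_{L^2(\Sigma)}^2 \geq -(1 + t^2) C \|x\|_{L^2(\Sigma)}^2 \geq -2C \|x\|_{L^2(\Sigma)}^2.
\]
Setting $s_\infty = -2C - 1 < 0$, any $s \leq s_\infty$ forces $\|x\|_{L^2(\Sigma)} = 0$, hence $x = 0$; the fourth relation above then gives $\phi = -t \tL_2 \cdot 0 = 0$, so $\beta(s,t) \cap \mu(0) = \{0\}$.

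The main obstacle is justifying the uniform lower form bound $\langle \tL_i x, x \rangle \geq -C \|x\|_{L^2(\Sigma)}^2$ in the Lipschitz setting. One route is to cite the semi-boundedness of the unbounded operators $\Lambda_i$ directly; a more self-contained approach is to use Green's identity to write $\langle \tL_i x, x \rangle = \int_{\Omega_i}\bigl[\,|\nabla u|^2 + V u^2\,\bigr]$ with $u$ the unique element of $K_i^0$ having $u|_\Sigma = x$, and then combine $\int_{\Omega_i} V u^2 \geq -\|V\|_\infty \|u\|_{L^2(\Omega_i)}^2$ with a Poincar\'e--Friedrichs estimate of the form $\|u\|_{L^2(\Omega_i)}^2 \leq C\bigl(\|\nabla u\|_{L^2(\Omega_i)}^2 + \|x\|_{L^2(\Sigma)}^2\bigr)$ to close the inequality.
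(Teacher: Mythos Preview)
Your proposal is correct and begins exactly as the paper does: both reduce a nontrivial intersection to the equation $(\tL_1 + t^2 \tL_2)x = s\cJ x$, pair with $x$, and apply Green's identity to express $\langle \tL_i x, x\rangle$ as an energy integral over $\Omega_i$.

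The divergence comes after that. The paper does \emph{not} invoke semi-boundedness of $\Lambda_i$ or a Poincar\'e inequality. Instead it proves the inequality
\[
	\|(u_1,u_2)\|_{H^1}^2 \leq C\|(u_1,u_2)\|_{L^2}^2 + s\|f\|_{L^2(\Sigma)}^2
\]
and then runs a compactness/contradiction argument: normalize $\|(u_{1j},u_{2j})\|_{L^2} = 1$, pass to weak $H^1$ limits $\bar u_i$, observe $\bar u_i \in K_i^0$ with $\bar u_i|_\Sigma = 0$, and conclude $\bar u_i = 0$ from $0 \notin \sigma(L_i^D)$, contradicting the normalization. In effect the paper uses the PDE structure (specifically the invertibility of $L_i^D$) in place of the Poincar\'e-with-trace inequality you cite.

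Your route is more direct and arguably cleaner, since the inequality $\|u\|_{L^2(\Omega_i)}^2 \leq C\bigl(\|\nabla u\|_{L^2(\Omega_i)}^2 + \|u|_\Sigma\|_{L^2(\Sigma)}^2\bigr)$ is a standard Poincar\'e inequality on Lipschitz domains (valid here because every component of $\Omega_i$ meets $\Sigma$), and it yields the form lower bound immediately. The paper's route is more self-contained and highlights where the hypothesis $0 \notin \sigma(L_i^D)$ enters; it also remarks (Remark~\ref{addreg}) that under the stronger assumption of a Lipschitz metric one can bypass the compactness argument via a direct estimate from McLean---your Poincar\'e argument achieves a similar shortcut without that extra regularity.
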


\begin{proof}
From the definition of $\mu$ we have
\[
	\mu(0) = \left\{ (f_1, \tL_1 f_1, f_1, -\tL_2 f_2) : f_1,f_2 \in \Hp \right\},
\]
and so if $\beta(s,t) \cap \mu(0) \neq \varnothing$ there exists a function $f \in \Hp$ that satisfies $(\tL_1 + t^2 \tL_2) f = s \cJ f$. For $i \in \{1,2\}$ let $u_i \in K_i^0$ denote the unique solution to $Lu_i = 0$ with $\left. u_i \right|_\Sigma = f$. Integrating by parts, we have
\begin{align*}
	s \|f\|_{L^2(\Sigma)}^2 
	&= \int_\Sigma \left( u_1 \frac{\p u_1}{\p \nu_1} + t^2 u_2 \frac{\p u_2}{\p \nu_2}\right) \\
	&= \int_{\Omega_1} \left[ |\nabla u_1|^2 + V u_1^2\right] + t^2 \int_{\Omega_2} \left[ |\nabla u_2|^2 + V u_2^2\right], 
\end{align*}
so there exists a positive constant $C$, independent of $s$ and $t$, such that
\begin{align}\label{sjbound}
	\| (u_1, u_2)\|_{H^1}^2 \leq C \|(u_1,u_2)\|_{L^2}^2 + s\|f\|^2_{L^2(\Sigma)}. 
\end{align}

Now suppose the conclusion of the lemma is false, so there exist sequences of real numbers $s_j \in (-\infty,0]$ and $t_j \in [0,1]$, and functions $f_j \in \Hp$, such that $s_j \to -\infty$ and $(\Lambda_1 + t_j^2 \Lambda_2)f_j = s_j f_j$. Let $u_{1j}$ and $u_{2j}$ denote the unique functions in $K_1^0$ and $K_2^0$ that satisfy $\left. u_{1j} \right|_\Sigma = \left. u_{2j} \right|_\Sigma = f_j$. Without loss of generality we assume that
\[
	\|(u_{1j}, u_{2j})\|_{L^2}^2 = \|u_{1j}\|_{L^2(\Omega_1)}^2 + \|u_{2j}\|_{L^2(\Omega_2)}^2 = 1.
\]
Since $s_j \leq 0$, \eqref{sjbound} implies $\{u_{1j}\}$ and $\{u_{2j}\}$ are bounded in $H^1$, so there are subsequences with
\[
	(u_{1j},u_{2j}) \to (\bar{u}_1,\bar{u}_2) \text{ in } L^2, \quad
	(u_{1j},u_{2j}) \rightharpoonup (\bar{u}_1,\bar{u}_2) \text{ in } H^1.
\]
It follows that $\bar{u}_1 \in K_1^0$ and $\bar{u}_2 \in K_2^0$, with $\left. \bar{u}_1 \right|_\Sigma = \left. \bar{u}_2 \right|_\Sigma \in \Hp$. Moreover, the compactness of the embedding $\Hp \hookrightarrow L^2(\Sigma)$ implies $\left. u_{1j} \right|_\Sigma \to \left. \bar{u}_1 \right|_\Sigma$ in $L^2(\Sigma)$. However, since $s_j \leq 0$ and $s_j \to -\infty$, \eqref{sjbound} implies $\left. u_{1j} \right|_\Sigma \to 0$ in $L^2(\Sigma)$, hence $\left. \bar{u}_1 \right|_\Sigma = 0$. Since $0 \notin \sigma(L^D_1)$, this implies $\bar{u}_1 = 0$. We similarly find that $\bar{u}_2 = 0$, which contradicts the fact that
\[
	\|\bar{u}_1\|_{L^2(\Omega_1)}^2 + \|\bar{u}_2\|_{L^2(\Omega_2)}^2 = 1
\]
and thus completes the proof.
\end{proof}

\begin{rem}\label{addreg}
If we assume that the metric tensor $g$ is Lipschitz, instead of just $L^\infty$, the above compactness argument is not needed. With this additional regularity hypothesis, Theorem 4.25 of \cite{M00} gives
\[
	\|(u_1,u_2) \|_{L^2} \leq C \|f\|_{L^2(\Sigma)}
\]
which, together with \eqref{sjbound}, immediately establishes Lemma \ref{slowerbound}.
\end{rem}

We have thus shown that $\beta(s,t)$ is a smooth curve in the Fredholm--Lagrangian Grassmannian $\cF \Lambda_{\mu(0)}(\cH_\boxplus)$, so it has a well-defined Maslov index with respect to either $s$ or $t$.


\begin{prop}\label{betamorse}
If $t _0 \in [0,1]$ is fixed, then $\Mas(\beta(s,t_0); \mu(0)) = \Mor_0(\Lambda_1 + t_0^2 \Lambda_2)$.
\end{prop}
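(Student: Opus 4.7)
The plan is to reduce the Maslov index calculation to a spectral count for the selfadjoint operator $\Lambda_1+t_0^2\Lambda_2$ on $L^2(\Sigma)$, by identifying crossings of $s\mapsto \beta(s,t_0)$ with $\mu(0)$ as eigenvalues, computing the crossing form, and showing it is positive definite. I will work on the compact interval $[s_\infty,0]$ provided by Lemma \ref{slowerbound}, which captures every crossing.

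\emph{Identifying crossings.} A nonzero element of $\beta(s_*,t_0)\cap\mu(0)$ is some $(x,t_0\phi+s_*\mathcal{J}x,t_0 x,\phi)$ arising from functions $u_i\in K_i^0$ with
\[
u_1|_\Sigma=x,\qquad u_2|_\Sigma=t_0 x,\qquad \tfrac{\partial u_1}{\partial\nu_1}\big|_\Sigma=t_0\phi+s_*\mathcal{J}x,\qquad -\tfrac{\partial u_2}{\partial\nu_2}\big|_\Sigma=\phi.
\]
Eliminating $\phi$ yields $(\tL_1+t_0^2\tL_2)x=s_*\mathcal{J}x$. By a straightforward extension of Lemma \ref{DNdomains} to the sum of two Dirichlet-to-Neumann operators, this is equivalent to $s_*\in\sigma(\Lambda_1+t_0^2\Lambda_2)$ on $L^2(\Sigma)$, with intersection multiplicity equal to $\dim\ker(\Lambda_1+t_0^2\Lambda_2-s_*)$. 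The hypothesis $0\notin\sigma(L^D_1)\cup\sigma(L^D_2)$ is what makes this clean: if $x=0$ then $u_1|_\Sigma=u_2|_\Sigma=0$, forcing $u_1=u_2=0$ and hence $\phi=0$, so there is no spurious contribution independent of $x$.

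\emph{Crossing form.} For a crossing at $s_*$ represented by $(x,\phi)$, take the natural extension $z(s)=(x,t_0\phi+s\mathcal{J}x,t_0 x,\phi)\in\beta(s,t_0)$, giving $\dot z(s_*)=(0,\mathcal{J}x,0,0)$. A one-line computation with $\omega_\boxplus=\omega\oplus(-\omega)$ produces
\[
\omega_\boxplus\bigl(z(s_*),\dot z(s_*)\bigr)=(\mathcal{J}x)(x)=\|x\|_{L^2(\Sigma)}^{2}.
\]
Since $x\neq 0$ at any crossing, the crossing form is positive definite on $\beta(s_*,t_0)\cap\mu(0)$. Every crossing is therefore regular and contributes positively.

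\emph{Counting.} By Lemma \ref{slowerbound} the path has no crossing at or below $s_\infty$, so the Maslov index over $[s_\infty,0]$ sums the contributions of all eigenvalues of $\Lambda_1+t_0^2\Lambda_2$ in $(-\infty,0]$. With the signed count collapsing to an unsigned one, the interior crossings yield $\Mor(\Lambda_1+t_0^2\Lambda_2)$; accounting at the terminal endpoint $s=0$ (with positive crossing form, paralleling the convention already used in Proposition \ref{lambdaMaslov} with the opposite sign) contributes the remaining $\dim\ker(\Lambda_1+t_0^2\Lambda_2)$, and the total is exactly $\Mor_0(\Lambda_1+t_0^2\Lambda_2)$.

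The main obstacle is the bounded-to-unbounded passage in Step 1: one must check that a solution $x\in\Hp$ of the operator equation $(\tL_1+t_0^2\tL_2)x=s_*\mathcal{J}x$ in $\Hm$ automatically lies in $\mathcal{D}(\Lambda_1+t_0^2\Lambda_2)\subset L^2(\Sigma)$ with each $\partial_{\nu_i}u_i|_\Sigma\in L^2(\Sigma)$. This mirrors the proof of Lemma \ref{DNdomains} and relies on the fact that $\mathcal{J}x\in\Hm$ forces $\tL_1 x+t_0^2\tL_2 x$ to be the image of an $L^2$ function, which in turn improves the regularity of each normal derivative.
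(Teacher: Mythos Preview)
Your proof is correct and follows essentially the same route as the paper: identify crossings with solutions of $(\tL_1+t_0^2\tL_2)x=s_*\cJ x$, compute the crossing form $\omega_\boxplus(z,\dot z)=\|x\|_{L^2(\Sigma)}^2$, rule out $x=0$, and sum over $s\leq 0$ using Lemma~\ref{slowerbound}. Your treatment of the $x=0$ case via $0\notin\sigma(L^D_i)$ and your explicit endpoint accounting are slightly more detailed than the paper's, but the argument is the same; note that for the regularity step you flag at the end, one only needs the \emph{sum} $\tL_1 x+t_0^2\tL_2 x=s_*\cJ x$ to lie in $L^2(\Sigma)$, which is immediate since $\cJ x$ is just $x$ viewed in $\Hm$, so no separate improvement of each normal derivative is required.
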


\begin{proof}
As in the proof of Lemma \ref{slowerbound}, we have that $\beta(s,t) \cap \mu(0) \neq \varnothing$ if and only if there is a function $f \in \Hp$ that satisfies $(\tL_1 + t^2 \tL_2) f = s \cJ f$.  This implies $\dim[\beta(s,t) \cap \mu(0)] = \dim \ker (\tL_1 + t^2 \tL_2 - s\cJ) = \dim \ker (\Lambda_1 + t^2 \Lambda_2 - s)$, where the last equality is a consequence of Lemma \ref{DNdomains}.

We claim that the path $s \mapsto \beta(s,t_0)$ is positive definite. Assuming the claim, it follows from Lemma \ref{slowerbound} that
\[
	\Mas(\beta(s,t_0); \mu(0)) = \sum_{s \leq 0} \dim[\beta(s,t_0) \cap \mu(0)]
	= \Mor_0(\Lambda_1 + t_0^2 \Lambda_2).
\]

To prove the claim, suppose that $s_*$ is a crossing time, so there is a path $z(s) = (x, t_0 \phi + sx, t_0 x, \phi)$ in $\beta(s)$ with $z(s_*) \in \beta(s_*,t_0) \cap \mu(0)$. We compute
\begin{align*}
	\omega_\boxplus\left( z, \frac{dz}{ds} \right) &= \omega_\boxplus\left( (x, t_0\phi + sx, t_0x, \phi), (0, x, 0, 0) \right) \\
	&= \omega((x,t_0\phi),(0,x)) - \omega((t_0 x,\phi),(0,0)) \\
	&= \| x \|_{L^2(\Sigma)}^2,
\end{align*}
which is positive unless $x=0$. But if $x=0$, then $t_0\phi + s_* x = \tL_1x = 0$, which is not possible because $z(s_*) \neq 0$, so the claim is proved.
\end{proof}

\begin{proof}[Proof of Theorem \ref{DNcrossing}]
Since the boundary of $\beta\colon [s_\infty,0] \times [0,1] \to \cF_{\mu(0)} \Lambda(\cH_\boxplus)$ is null-homotopic, its Maslov index vanishes, hence
\[
	\Mas(\beta(s,0); \mu(0)) + \Mas(\beta(0,t); \mu(0)) = \Mas(\beta(s,1); \mu(0)).
\]
Since $\beta(0,t) = \beta(t)$, Proposition \ref{betamorse} implies
\[
	\Mas(\beta(t); \mu(0)) = \Mor_0(\Lambda_1 + \Lambda_2) - \Mor_0(\Lambda_1).
\]
and the proof is complete.
\end{proof}

%
%
%

%

\begin{lemma} Suppose $0 \notin \sigma(L^D_1) \cup \sigma(L^D_2)$, so $\Lambda_1$ and $\Lambda_2$ are well defined. Then
\[
	\Mor(\Lambda_1 + \Lambda_2) \leq \Mor(\Lambda_1) + \Mor(\Lambda_2).
\]
and
\[
	\Mor_0(\Lambda_1 + \Lambda_2) \leq \Mor_0(\Lambda_1) + \Mor_0(\Lambda_2).
\]
\end{lemma}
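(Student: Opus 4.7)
The plan is to use the variational (max--min) characterization of the Morse index for a selfadjoint operator that is bounded below and has compact resolvent: namely,
\[
\Mor(A) = \sup\bigl\{\dim V : V \subset \mathcal{D}(q_A), \ q_A(f,f) < 0 \ \text{for all } 0 \neq f \in V\bigr\},
\]
and similarly for $\Mor_0$ with $<$ replaced by $\leq$. Since $\Lambda_1$ and $\Lambda_2$ have the common form domain $\Hp$, their sum $\Lambda_1 + \Lambda_2$ is selfadjoint on $L^2(\Sigma)$ with the same form domain and compact resolvent, so the characterization applies to all three operators.

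For the first inequality, let $E_i^- \subset L^2(\Sigma)$ denote the spectral subspace of $\Lambda_i$ corresponding to $(-\infty,0)$, so $\dim E_i^- = \Mor(\Lambda_i)$, and $E_i^- \subset \Hp$ since it is spanned by finitely many eigenfunctions. Suppose for contradiction that there exists a subspace $V \subset \Hp$ with $\dim V > \Mor(\Lambda_1) + \Mor(\Lambda_2)$ on which $q_{\Lambda_1 + \Lambda_2}$ is negative definite. Since $\dim(E_1^- + E_2^-) \leq \Mor(\Lambda_1) + \Mor(\Lambda_2)$, a dimension count produces a nonzero $f \in V$ that is $L^2(\Sigma)$-orthogonal to $E_1^-$ and $E_2^-$. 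But then $f$ lies in the positive spectral subspace of each $\Lambda_i$, so $\langle \Lambda_i f, f\rangle \geq 0$, hence $\langle (\Lambda_1 + \Lambda_2) f, f\rangle \geq 0$, contradicting that the form is negative on $V \setminus \{0\}$.

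For the second inequality, the argument is identical except that $E_i^-$ is replaced by the spectral subspace of $\Lambda_i$ corresponding to $(-\infty,0]$, of dimension $\Mor_0(\Lambda_i)$. A nonzero $f$ lying in the intersection of the strictly positive spectral subspaces then satisfies $\langle \Lambda_i f, f\rangle > 0$, and the strict positivity of the sum contradicts $q_{\Lambda_1+\Lambda_2}|_V \leq 0$.

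The argument is essentially routine once the variational characterization is in hand; the only point that requires care is the identification of the form domain, so that the eigenfunctions of $\Lambda_i$ can legitimately be paired against elements of the test subspace $V$. Given the standing hypothesis $0 \notin \sigma(L_i^D)$, both $\Lambda_i$ are realized as selfadjoint operators on $L^2(\Sigma)$ with form domain $\Hp$, and the sum $\Lambda_1 + \Lambda_2$ inherits the same form domain, so this step is unobstructed.
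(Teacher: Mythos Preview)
Your argument is correct and is essentially the same as the paper's: both proofs take the finite-dimensional negative (resp.\ nonpositive) spectral subspaces $E_i^-$ of $\Lambda_i$, observe that $\Lambda_1+\Lambda_2$ is nonnegative (resp.\ strictly positive) on $(E_1^- + E_2^-)^\perp$, and then invoke the variational/minimax characterization to bound $\Mor(\Lambda_1+\Lambda_2)$ by $\dim(E_1^- + E_2^-) \leq \Mor(\Lambda_1)+\Mor(\Lambda_2)$. The only cosmetic difference is that the paper phrases this directly via the minimax formula for $\lambda_{r+1}$, whereas you phrase it by contradiction using a putative $(r+1)$-dimensional negative subspace; these are two sides of the same coin.
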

In fact the proof yields the stronger result that $\Mor(\Lambda_1 + \Lambda_2)$ is bounded above by the dimension of the sum of the negative subspaces for $\Lambda_1$ and $\Lambda_2$, which is  bounded above by the sum of the dimensions.

\begin{proof} Since $\Lambda_1$ is selfadjoint with compact resolvent, there is a spectral decomposition $L^2(\Sigma) = E_1^- \oplus E_1^0 \oplus E_1^+$, and similarly for $\Lambda_2$. Let $p = \dim(E_1^-)$ and $q = \dim(E_2^-)$. Then $V := E_1^- + E_2^-$ has dimension $r \leq p+q$, and $\Lambda_1 + \Lambda_2$ is nonnegative on $V^\bot$. Therefore
\[
	\sup_{\dim(U) = r} \inf \left\{ \frac{\left<(\Lambda_1 + \Lambda_2)f,f\right> }{\|f\|_{L^2(\Sigma)}}: f \in U^\bot  \right\} \geq
	\inf \left\{ \frac{\left<(\Lambda_1 + \Lambda_2)f,f\right> }{\|f\|_{L^2(\Sigma)}}: f \in V^\bot  \right\} \geq 0
\]
and the minimax principle implies $\lambda_{r+1}(\Lambda_1 + \Lambda_2) \geq 0$, so $\Mor(\Lambda_1 + \Lambda_2) \leq r$ and the proof is complete.

Replacing $E_i^-$ by $E_i^- \oplus E_i^0$ for $i\in \{1,2\}$ yields $\lambda_{r+1}(\Lambda_1 + \Lambda_2) > 0$, and the second inequality follows.
\end{proof}

%
%


\section{Applications}\label{secApp}
We now present several applications of the results proved above. The results in Sections \ref{secDN} and \ref{secPeriodic} are not immediate consequences of Theorems \ref{MorMas} and \ref{DNcrossing} but follow from similar constructions, so we only sketch the proofs.

\subsection{The nodal deficiency}
Our first application is Corollary \ref{nodalDef}, which gives an explicit formula for the nodal deficiency of an eigenfunction $\phi_k$ corresponding to the simple eigenvalue $\lambda_k$. 

\begin{proof}[Proof of Corollary \ref{nodalDef}]
Define
\[
	\Omega_+ = \{ \phi_k > 0\}, \quad \Omega_- = \{ \phi_k < 0\},
\]
and let $n_\pm(\phi_k)$ denote the number of connected components of $\Omega_\pm$, so the total number of nodal domains is $n(\phi_k) = n_+(\phi_k) + n_-(\phi_k)$.

For $\epsilon > 0$ define $L(\epsilon) = -\Delta - (\lambda_k + \epsilon)$. On each nodal domain the first Dirichlet eigenvalue of $L$ is $\lambda_k$, so for small enough $\epsilon>0$ the Dirichlet realizations $L^D_\pm(\epsilon)$ are invertible, with
\[
	\Mor(L^D_\pm(\epsilon)) = n_\pm(\phi_k).
\]
Moreover, the global realization $L^G(\epsilon)$ is invertible as long as $\lambda_k + \epsilon < \lambda_{k+1}$, and so
\[
	\Mor(L^G(\epsilon)) = k.
\]

Now let $\Lambda_\pm(\epsilon)$ denote the Dirichlet-to-Neumann maps for $L(\epsilon)$ on $\Omega_\pm$. It follows immediately from Corollary \ref{DNbracket} that the nodal deficiency $\delta(\phi_k) = k - n(\phi_k)$ is given by
\[
	\delta(\phi_k) = \Mor \left( \Lambda_+(\epsilon) + \Lambda_-(\epsilon) \right),
\]
and so the proof is complete.
\end{proof}

\subsection{Almost doubled manifolds}\label{secDoubled}
We now consider the setting of Theorem \ref{perturb}, in which
\begin{align}\label{DNbound}
	\left\| \tL_1^{-1} \tL_2 - c I \right\|_{B(\Hp)} < 1+c
\end{align}
for some constant $c$. 

\begin{proof}[Proof of Theorem \ref{perturb}]
As in the proof of Proposition \ref{betamorse}, we have that $t_*$ is a crossing time, i.e. $\beta(t_*) \cap \mu(0) \neq \{0\}$, if and only if $\ker(I + t_*^2 \tL_1^{-1} \tL_2)$ is nontrivial. We compute
\begin{align*}
	I + t^2 \tL_1^{-1} \tL_2
	&= I +ct^2I+  t^2 \tL_1^{-1} \tL_2 - ct^2 I \\
	&= (1 + c t^2) \left[I + \frac{t^2}{1+ct^2} \left(  \tL_1^{-1} \tL_2 - c I\right) \right],
\end{align*}
hence $I + t_*^2 \tL_1^{-1} \tL_2$ is invertible if
\[
	\frac{t_*^2}{1+c t_*^2} \left\| \tL_1^{-1} \tL_2 - c I \right\|_{\Hp} < 1
\]
Since the function $t \mapsto t^2/(1+ct^2)$ is increasing, it suffices to verify the above condition at $t=1$. This is just the inequality \eqref{DNbound}, so the result follows.
\end{proof}

A simple case is when $\tau\colon M \to M$ is an involution such that $\left.\tau\right|_\Sigma = \operatorname{id}$, $\tau(\Omega_1) = \Omega_2$, and $L(u\circ \tau) = (Lu) \circ \tau$ for all $u \in H^1(M)$. If $\p M$ is nonempty, it is necessary to assume that $\tau(\Sigma_D) = \Sigma_D$ and $\tau(\Sigma_N) = \Sigma_N$. For instance, if the cylinder shown in Figure \ref{double} is given by $[0,2\pi] \times \bbS^1$, with the involution $\tau(x,\theta) = (2\pi-x,\theta)$, we require either $\p M = \Sigma_D$ or $\p M = \Sigma_N$, so both $\{0\} \times \bbS^1$ and $\{2\pi \} \times \bbS^1$ have the same boundary conditions. It follows immediately that $\tL_1 = \tL_2$, so Theorem \ref{perturb} implies $\Mas(\beta(t); \mu(0)) =0$.

If the involution $\tau$ does not preserve the boundary conditions, it may not be the case that $\tL_1 = \tL_2$, and $\Mas(\beta(t); \mu(0))$ may be nonzero. This can be seen from an elementary computation for the operator $L = -(d/dx)^2-C$ on $[0,\ell]$, with $C$ a positive constant. We let $\Omega_1 = (0,\ell/2)$ and $\Omega_2 = (\ell/2,\ell)$, and compute $\Mas(\beta(t); \mu(0))$ assuming Dirichlet boundary conditions at $x=0$ and Neumann conditions at $x=\ell$.

%

A basis for the weak solution space of the equation $Lu=0$ is
\[
	\sin(\sqrt{C}x), \quad \cos(\sqrt{C}(x-\ell))
\]
and so the space of two-sided Cauchy data at $\ell/2$ (computed according to \eqref{mudef}) is
\[
	\mu(0) = \left\{ \left( a\sin(\sqrt{C}\ell/2), a\sqrt{C} \cos(\sqrt{C}\ell/2), b\cos(\sqrt{C}\ell/2), b\sqrt{C}\sin(\sqrt{C}\ell/2) \right) : a,b \in \bbR \right\}.
\]
A crossing occurs at time $t_*$ precisely when $a=b$ and $t_* = \cot(\sqrt{C}\ell/2)$, so there is at most one crossing time in $[0,1]$. In particular, there is a crossing if and only if
\begin{align}\label{Ccondition}
	\frac{\pi}{4} + n\pi \leq \frac{\sqrt{C}\ell}{2} \leq \frac{\pi}{2} + n\pi
\end{align}
for some integer $n\geq 0$. The crossing form is given by
\[
	2 a^2 \sqrt{C} \sin^2(\sqrt{C}\ell/2) > 0,
\]
and so the Maslov index is either 1 or 0, depending on whether or not \eqref{Ccondition} is satisfied.

\subsection{The Dirichlet-to-Neumann map}\label{secDN}
We next use the Maslov index to prove Theorem \ref{thmFriedlander}. A geometric proof of this result was given by Mazzeo in \cite{M91}; here we observe that Mazzeo's proof can be formulated in terms of the Maslov index.

The na\"ive idea is to define a one-parameter family of Lagrangian subspaces that moves between $\{0\} \oplus \Hm$ and $\Hp \oplus \{0\}$, which correspond to Dirichlet and Neumann boundary conditions, respectively. By a homotopy argument the Maslov index of this path equals the difference of the Dirichlet and Neumann Morse indices of $L$. On the other hand, a direct computation shows that the Maslov index equals the Morse index of the Dirichlet-to-Neumann map $\Lambda$. This depends on a monotonicity property for the eigenvalues of $\Lambda$, which has a natural interpretation via the Maslov index.

However, this approach suffers from the fact that the path of Lagrangian subspaces that interpolates between Dirichlet and Neumann boundary conditions
fails to be continuous at the Dirichlet endpoint; see Remark \ref{cont}. To overcome this obstacle, we interpolate between Neumann and ``almost Dirichlet" boundary conditions, then use asymptotic results for the Robin boundary value problem to relate the Dirichlet and almost Dirichlet spectra.

Since $i \in \{1,2\}$ is fixed, we restrict our attention to a single domain, which we call $\Omega$, with Lipschitz boundary $\Sigma$. We first define the subspace
\begin{align}\label{betaDN}
	\beta(t) = \{(x,\phi) \in \cH : (\cos t) \cJ x + (\sin t)\phi = 0\}
\end{align}
in $\cH = \Hh$, where $\cJ\colon \Hp \to \Hm$ denotes the compact inclusion. We let $K^\lambda \subset H^1(\Omega)$ denote the space of weak solutions to $Lu = \lambda u$ and define the space of Cauchy data
\[
	\mu(\lambda) = \left\{ \left.\left(u,\frac{\p u}{\p \nu} \right)\right|_\Sigma : u \in K^\lambda \right\}.
\]
Note that $\mu(0)$ is the graph of $\tL\colon\Hp\to\Hm$.

\begin{lemma} \label{betabound}
There exists $\lambda_\infty < 0$ such that $\beta(t) \cap \mu(\lambda) = \{0\}$ for every $t \in (0,\pi/2]$ and $\lambda \leq \lambda_\infty$.
\end{lemma}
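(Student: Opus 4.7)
The plan is a direct energy estimate modeled on the proof of Lemma \ref{uniformbound}, exploiting the fact that, for $t \in (0, \pi/2]$, the Lagrangian subspace $\beta(t)$ encodes a Robin-type boundary condition $\p u/\p \nu = -\cot(t)\, u$ on $\Sigma$ whose coefficient $\cot(t) \geq 0$ is nonnegative. This makes the boundary term in Green's identity nonpositive, which forces the solution to vanish once $\lambda$ is sufficiently negative.

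First I would unpack the intersection condition: a nonzero element of $\beta(t) \cap \mu(\lambda)$ yields a nonzero $u \in K^\lambda$ whose Cauchy data $x = \left.u\right|_\Sigma \in \Hp$ and $\phi = \left.\frac{\p u}{\p \nu}\right|_\Sigma \in \Hm$ satisfy $(\cos t)\cJ x + (\sin t) \phi = 0$ in $\Hm$. Pairing this identity against $x \in \Hp$, and using that $\cJ$ is the compact inclusion induced by the $L^2(\Sigma)$ pairing so that $(\cJ x)(x) = \|x\|_{L^2(\Sigma)}^2$, one gets
\[
\phi(x) = -\cot(t)\, \|x\|_{L^2(\Sigma)}^2 \leq 0,
\]
with the understanding that the right-hand side is simply $0$ at the Neumann endpoint $t = \pi/2$.

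Next I would apply the weak Green's identity \eqref{Green} to $u$ with test function $v = u$, and substitute $\Delta_g u = (V - \lambda)u$ to obtain
\[
\int_\Omega \left[ |\nabla u|^2 + (V - \lambda)\, u^2 \right] = \int_\Sigma u\, \frac{\p u}{\p \nu} = \phi(x) \leq 0.
\]
Choosing any $\lambda_\infty < \inf_{x \in \Omega} V(x)$ (finite since $V \in L^\infty$), the integrand on the left-hand side is pointwise nonnegative for every $\lambda \leq \lambda_\infty$, forcing $u \equiv 0$ and contradicting nontriviality.

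I do not anticipate any real obstacle here; the argument is essentially a two-line energy estimate, completely parallel to Lemma \ref{uniformbound}. The only mild subtlety is that $\phi$ lives in $\Hm$, so the boundary condition $(\cos t)\cJ x + (\sin t)\phi = 0$ must first be read as an identity in $\Hm$ and then dualized against $x \in \Hp$ to produce the scalar inequality on $\phi(x)$; once that is done the remaining steps are automatic. Note also that $t = 0$ is (correctly) excluded from the statement, since there $\beta(0)$ corresponds to Dirichlet data on $\Sigma$ and the sign argument above breaks down.
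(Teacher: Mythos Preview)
Your proposal is correct and is essentially identical to the paper's own proof: both extract the Robin condition $\p u/\p\nu = -(\cot t)u$ from the definition of $\beta(t)$, observe that $\cot t \geq 0$ makes the boundary term $\int_\Sigma u\,\p u/\p\nu$ nonpositive, and then apply Green's identity to force $\lambda \geq \inf V$. Your only (welcome) addition is the explicit care in reading the boundary relation as an identity in $\Hm$ before pairing with $x$.
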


\begin{proof} Suppose $\beta(t) \cap \mu(\lambda) \neq \{0\}$. By definition there exists a function $u \in H^1(\Omega)$ such that $Lu = \lambda u$ weakly and
\[
	(\cos t) \left.u\right|_\Sigma + (\sin t) \left.\frac{\p u}{\p\nu}\right|_\Sigma = 0,
\]
hence
\[
	\int_\Sigma u \frac{\p u}{\p \nu} = - (\cot t) \int_\Sigma u^2 \leq 0
\]
because $\cot t \geq 0$ for $t \in (0,\pi/2]$.
Green's formula implies
\[
	\lambda \int_\Omega u^2 = - \int_\Sigma u \frac{\p u}{\p \nu} + \int_\Omega \left[ |\nabla u|^2 + V(x) u^2 \right]  \geq \int_\Omega \left[ |\nabla u|^2 + V(x) u^2 \right]
\]
and the result follows with any $\lambda_\infty < \inf V(x)$.
\end{proof}

\begin{prop} There exists $\epsilon > 0$ such that the subspaces $\mu(\lambda)$ and $\beta(t)$ are smooth, Lagrangian, and form a Fredholm pair for $(\lambda,t) \in [\lambda_\infty,0] \times [\epsilon,\pi/2]$. Moreover,
\begin{align}
	\Mas(\mu(\lambda); \beta(\epsilon)) &= -\Mor(L^D) \label{betaDir} \\
	\Mas(\mu(\lambda); \beta(\pi/2)) &= -\Mor(L^N) \label{betaNeu} \\
	\Mas(\beta(t); \mu(\lambda_\infty)) &= 0 \label{betainf} \\
	\Mas(\beta(t); \mu(0)) &= \Mor_0(\Lambda). \label{betaLambda}
\end{align}
\end{prop}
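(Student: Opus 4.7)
The plan is to first verify the Lagrangian and Fredholm-pair structure, and then compute each of the four Maslov indices by a crossing form analysis; the only step that does not reduce immediately to the techniques of Sections \ref{secProof} and \ref{secCrossing} is the Robin-to-Dirichlet identification used for \eqref{betaDir}. For $t \in (0,\pi/2]$ one rewrites $\beta(t) = \{(x, -\cot(t)\,\cJ x) : x \in \Hp\}$, so that $\beta(t)$ is the graph of the bounded operator $-\cot(t)\,\cJ\colon \Hp \to \Hm$. The self-adjointness of $R^{-1}\cJ$ on $\Hp$, which was already established in Lemma \ref{betaLag} for the doubled setting, carries over verbatim and shows that $t \mapsto \beta(t)$ is a smooth curve of Lagrangian subspaces. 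The smoothness and Lagrangian nature of $\mu(\lambda)$ follow from Proposition 3.5 of \cite{CJM14}. The Fredholm-pair property is established by the Peetre-lemma strategy of Lemma \ref{propFredholm}: decomposing the trace $\tr(u)$ relative to $\beta(t)$ and using an arithmetic--geometric mean estimate on $\int_\Sigma u\,\partial u/\partial\nu$ to absorb that term into $\|P^\perp_{\beta(t)}\tr(u)\|_\cH$ gives the required energy bound.

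Next, \eqref{betainf} is immediate from Lemma \ref{betabound}. For \eqref{betaNeu} and \eqref{betaDir}, observe that $\mu(\lambda) \cap \beta(t_0) \neq \{0\}$ precisely when there exists $u \in K^\lambda$ satisfying $(\cos t_0)\,u + (\sin t_0)\,\partial u/\partial\nu = 0$ on $\Sigma$. At $t_0 = \pi/2$ this is the Neumann condition, and at $t_0 = \epsilon$ it is the Robin condition $\partial u/\partial\nu + \cot(\epsilon)\,u = 0$. The crossing form computation of Proposition \ref{lambdaMaslov} depends only on $u \in K^\lambda$ and Green's first identity, not on the particular boundary condition, and yields $-\|u(\lambda_*)\|_{L^2(\Omega)}^2 < 0$ at every crossing. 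Consequently $\Mas(\mu(\lambda);\beta(\pi/2)) = -\Mor(L^N)$ directly, while $\Mas(\mu(\lambda);\beta(\epsilon)) = -\Mor(L^R(\cot\epsilon))$, where $L^R(\sigma)$ denotes the Robin realization of $L$ with parameter $\sigma$. To upgrade the Robin index to the Dirichlet index, I would invoke the classical monotone convergence of Robin eigenvalues to Dirichlet eigenvalues as $\sigma \to +\infty$: since $0 \notin \sigma(L^D)$ by hypothesis, for all sufficiently small $\epsilon > 0$ one has $\Mor(L^R(\cot\epsilon)) = \Mor(L^D)$, with no Robin eigenvalue at $\lambda = 0$ or $\lambda = \lambda_\infty$, which eliminates any endpoint ambiguity and yields \eqref{betaDir}.

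For \eqref{betaLambda}, crossings of $\beta(t)$ with $\mu(0) = \operatorname{graph}(\tL)$ correspond exactly to those $t_*$ for which there is a nonzero $x \in \Hp$ satisfying $\tL x + \cot(t_*)\,\cJ x = 0$, equivalently $-\cot(t_*) \in \sigma(\Lambda)$ by Lemma \ref{DNdomains}. As $t$ sweeps $[\epsilon, \pi/2]$, the value $-\cot(t)$ sweeps $[-\cot(\epsilon),0]$, which captures every nonpositive eigenvalue of $\Lambda$ once $\epsilon$ is small enough (using that $\Lambda$ is bounded below with compact resolvent). Parametrizing along a fixed kernel vector by $z(t) = (x, -\cot(t)\,\cJ x)$, so that $z'(t_*) = (0, \csc^2(t_*)\,\cJ x)$, a direct evaluation yields $\omega(z(t_*), z'(t_*)) = \csc^2(t_*)\,\|x\|_{L^2(\Sigma)}^2 > 0$; every crossing is therefore positive definite and $\Mas(\beta(t); \mu(0)) = \Mor_0(\Lambda)$. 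The main obstacle in the overall argument is the Robin-to-Dirichlet convergence used to establish \eqref{betaDir} in the Lipschitz regularity setting of the paper, together with the need to choose a single $\epsilon > 0$ small enough simultaneously to avoid endpoint crossings at $\lambda = 0$, at $\lambda = \lambda_\infty$, and at $t = \epsilon$ across all four Maslov computations.
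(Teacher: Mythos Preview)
Your proposal is correct and follows essentially the same route as the paper: graph representation of $\beta(t)$ via the selfadjointness of $R^{-1}\cJ$, monotonicity of the $\lambda$-crossing form to get \eqref{betaNeu} and the Robin index, Robin-to-Dirichlet eigenvalue convergence for \eqref{betaDir} (the paper cites Proposition~3 of \cite{arendt2012friedlander} for this in the Lipschitz setting), Lemma~\ref{betabound} for \eqref{betainf}, and the same $\csc^2(t_*)\|x\|_{L^2(\Sigma)}^2$ crossing-form computation for \eqref{betaLambda}.

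The one genuine difference is the Fredholm-pair argument. You redo the Peetre-lemma estimate of Lemma~\ref{propFredholm} directly for each $t\in[\epsilon,\pi/2]$; this works, but the constants depend on $\cot\epsilon$ and the decomposition must be redone in the single-domain setting. The paper instead observes that the Neumann case $t=\pi/2$ is already known (Lemma~3.8 of \cite{CJM14}) and then writes $\beta(t) = A(t)\beta(\pi/2)$ with $A(t) = I - (\cot t)\cJ P$ a compact perturbation of the identity, so the Fredholm property transfers by Lemma~3.2 of \cite{CJLS14}. Their argument is shorter and avoids repeating the energy estimate; yours is more self-contained.
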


Referring to the square in Figure \ref{DNhomotopy}, the paths in \eqref{betaDir}--\eqref{betaLambda} correspond to the bottom, top, left and right sides, respectively.

\begin{center}
\begin{figure}
\begin{tikzpicture}
	\draw[thick,->] (-2,0) -- (4,0); 
	\node at (3.5,0.4) {$t=\epsilon$}; 
	\node at (4.5,0) {$\lambda$}; 
	\node at (3.7,3) {$t=\pi/2$}; 
	
	\draw[thick, ->] (3,-1) -- (3,4); 
	\node at (3,4.5) {$t$}; 
	\node at (0,-0.3) {$\lambda = \lambda_\infty$}; 
		
	\draw[very thick] (0,0.4) -- (0,3); 
	\draw[very thick] (0,0.4) -- (3,0.4); 
	\draw[very thick] (0,3) -- (3,3); 
	\draw[very thick] (3,0.4) -- (3,3); 
	
	\filldraw (1.5,0) circle (2pt); 
	\filldraw (2.5,0) circle (2pt); 
	\filldraw (0.5,3) circle (2pt); 
	\filldraw (1.5,3) circle (2pt); 
	\filldraw (2,3) circle (2pt); 
	\filldraw (3,1.5) circle (2pt); 
	
	\draw[thick] (0.5,3) to [out=270, in=90] (1.5,0); 
	\draw[thick] (1.5,3) to [out=270, in=90] (2.5,0); 
	\draw[thick] (2,3) to [out=270, in=150] (3,1.5); 
	
%
\end{tikzpicture}
\caption{An illustration of the homotopy in the proof of Theorem \ref{thmFriedlander}. A crossing at $t=0$ or $t=\pi/2$ occur when $\lambda$ is an eigenvalue for the Dirichlet or Neumann problem, respectively. A crossing at $\lambda=0$ occurs when $-\cot t$ is an eigenvalue of the Dirichlet-to-Neumann map. The assumption $0 \notin \sigma(L^D)$ precludes the existence of a crossing at the origin.}
\label{DNhomotopy}
\end{figure}
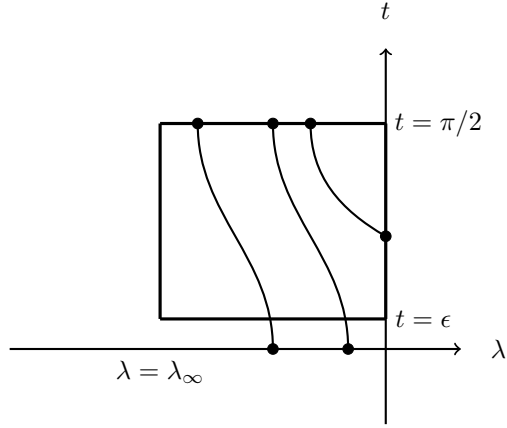
\end{center}

\begin{proof}
That $\mu(\lambda)$ is a smooth curve of Lagrangian subspaces was established in Proposition 3.5 of \cite{CJM14}.  For $t \neq 0$ we can express $\beta(t)$ as the graph of the family of selfadjoint operators $A(t) = (\cot t) R^{-1} \cJ$ on the Lagrangian subspace $\rho = \Hp$, where $R\colon \Hp\to\Hm$ is the Riesz duality operator. That is,
\[
	\beta(t) = G_\rho(A(t)) := \{x + JA(t)x : x \in \rho \},
\]
with $J(x,\phi) := (R^{-1}\phi, -Rx)$. It follows (cf. the proof of Lemma \ref{betaLag}) that $\beta(t)$ is a smooth family of Lagrangian subspaces for $t \neq 0$.

We next show that $\mu(\lambda)$ and $\beta(t)$ comprise a Fredholm pair, first observing that the result is already known for $t=\pi/2$ (the Neumann case) by Lemma 3.8 of \cite{CJM14}. Let $P$ denote the orthogonal projection onto $\beta(\pi/2) = \Hp$ and define $A(t) = I - (\cot t) \cJ P \colon \cH \to \cH$. Since $A(t) \beta(\pi/2) = \beta(t)$ and $A(t)$ is a compact perturbation of the identity, the result follows from Lemma 3.2 in \cite{CJLS14}.

The monotonicity of $\mu$ with respect to $\lambda$, as was shown in Lemma 4.2 of \cite{CJM14}, immediately yields \eqref{betaNeu} and the equality $\Mas(\mu(\lambda); \beta(\epsilon)) = -\Mor(L^R_\epsilon)$, where $L^R_\epsilon$ is the realization of $L$ with $\beta(\epsilon)$ boundary conditions,
\[
	\frac{\p u}{\p\nu} = -(\cot \epsilon) u.
\]
We now apply Proposition $3$ of \cite{arendt2012friedlander}, which gives that the ordered eigenvalues of $L^R_\epsilon$ converge to the ordered eigenvalues of $L^D$ as $\epsilon \to 0$. Since $0 \notin \sigma(L^D)$ this implies $\Mor(L^R_\epsilon) = \Mor(L^D)$ for sufficiently small $\epsilon$, and \eqref{betaDir} follows.

Equality \eqref{betainf} is an immediate consequence of Lemma \ref{betabound}, so only \eqref{betaLambda} remains.

Suppose $t_* \in [\epsilon, \pi/2]$ is a crossing time, so $\beta(t_*) \cap \mu(0) \neq \{0\}$. Then there exists a path $z(t) = (x(t), \phi(t))$ in $\cH$ such that $z(t) \in \beta(t)$ for $|t - t_*| \ll 1$ and $z(t_*) \in \mu(0)$. Since $t_* \neq 0$ we have $\phi(t) = -(\cot t) \cJ x(t)$. It follows that
\[
	\phi'(t) = (\csc^2 t)\cJ x(t) - (\cot t) \cJ x'(t)
\]
and so the crossing form
\[
	Q(z(t_*)) = \left. \omega(z,z')\right|_{t=t_*} = (\csc^2 t_*) \| x(t_*) \|_{L^2(\Sigma)}^2
\]
is strictly positive. Since $\beta(t) \cap \mu(0) \neq \{0\}$ if and only if $-(\cot t)$ is an eigenvalue of $\Lambda$, we conclude that
\[
	\Mas(\beta(t),\mu(0)) = \sum_{t \in (\epsilon,\pi/2]} \dim \left[ \beta(t) \cap \mu(0) \right] 
	= \# \sigma(\Lambda) \cap (-\cot \epsilon, 0] = \Mor_0(\Lambda)
\]
for $\epsilon > $ sufficiently small.
\end{proof}

Having established \eqref{betaDir}--\eqref{betaLambda}, Theorem \ref{thmFriedlander} follows from a homotopy argument as in the proof of Theorem \ref{MorMas}. We conclude the section by justifying the decision to restrict the path $\beta(t)$ to the interval $[\epsilon,\pi/2]$.

\begin{rem}\label{cont}
 The path $\beta(t)$ defined in \eqref{betaDN} is discontinuous at $t=0$. By definition this means the corresponding family of orthogonal projections $P(t)$ is discontinuous. It is easy to see that $S(t) (x,\phi) := x - (\cot t) \cJ x$ defines a projection of $\cH$ onto $\beta(t)$ for $t \neq 0$. Using Lemma 12.8 of \cite{BW93} we compute the orthogonal projection
\begin{align*}
	P(t) &= S S^* \left[SS^* + (I-S^*)(I-S)\right]^{-1} \\ &= 
	\left( \begin{matrix} I_{\Hp} & -(\cot t) \cJ^* \\ -(\cot t) \cJ & (\cot^2 t) \cJ \cJ^* \end{matrix} \right)
	\left( \begin{matrix} \left[I_{\Hp} + (\cot^2 t) \cJ^*\cJ \right]^{-1} & 0 \\ 0 & \left[I_{\Hm} + (\cot^2 t) \cJ\cJ^* \right]^{-1} \end{matrix} \right).
\end{align*}
Now consider the component in the upper left-hand corner, 
\[
	P_{11}(t) := \left[ I_{\Hp} + (\cot^2 t) \cJ^* \cJ \right]^{-1} \colon\Hp\lra\Hp
\]
for $t \neq 0$. Since $P_{11}(0) = 0$, it suffices to prove that $\|P_{11}(t)\| \nrightarrow 0$. The operator $\cJ^*\cJ \colon\Hp\to\Hp$ is compact and injective (because $\cJ$ is), and hence has a sequence of eigenvalues tending to zero. Thus there exists $\{f_n\}$ in $\Hp$ such that $\| \cJ^*\cJ f_n\|_{\Hp} \leq n^{-1} \| f_n\|_{\Hp}$. Letting $g_n = f_n + (\cot^2 t) \cJ^* \cJ f_n$, we have $P_{11}(t) g_n = f_n$ and $\|g_n\|_{\Hp} \leq \left[1 + (\cot^2 t)/n  \right] \|f_n\|_{\Hp}$, therefore
\[
	\| P_{11}(t) g_n \|_{\Hp} = \|f_n\|_{\Hp} \geq \frac{\| g_n \|_{\Hp}}{1 + (\cot^2 t) /n}.
\]
Letting $n \to \infty$ implies $\|P_{11}(t)\| \geq 1$ for each $t \neq 0$, hence $P_{11}(t) \nrightarrow 0$.
\end{rem}


\subsection{Periodic boundary conditions}\label{secPeriodic}

We finally treat the case of a single domain $M$, with boundary $\pM = \Gamma_1 \cup \Gamma_2$ glued to itself via a map $\tau\colon\Gamma_1 \to \Gamma_2$, as described in Theorem \ref{thmPeriodic} and shown in Figure \ref{periodic}. The motivating example is the $n$-torus $\bbT^n$, which can be viewed as a cube in $\bbR^n$ with opposite faces identified. We can similarly consider any compact, orientable surface of genus $g$, which is just a $2g$-gon with opposite faces identified.

%
%

We first establish some analytic properties of the periodic Dirichlet-to-Neumann map $\Lambda_\tau$ defined in \eqref{Ltdef}. Throughout this section we assume the hypotheses of Theorem \ref{thmPeriodic}.

\begin{prop}
The periodic Dirichlet-to-Neumann map $\Lambda_\tau$ on $L^2(\Gamma_1)$ is bounded below and selfadjoint with compact resolvent.
\end{prop}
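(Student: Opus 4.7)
The natural plan is to realize $\Lambda_\tau$ as the self-adjoint operator associated to an explicit quadratic form via the first representation theorem. Define
\[
	q(f,g) = \int_M \bigl(\langle \nabla u_f, \nabla u_g\rangle + V u_f u_g\bigr)\, dV_g
\]
on the form domain $D(q) = \{f \in L^2(\Gamma_1) : u_f \in H^1(M)\}$, where $u_f$ is the unique weak solution of $Lu=0$ with $u|_{\Gamma_1}=f$ and $u|_{\Gamma_2} = f\circ\tau^{-1}$. Existence and uniqueness of $u_f$ follow from $0 \notin \sigma(L^D)$: given a smooth extension $\tilde u$ of the boundary data (which exists in $H^1(M)$ because the piecewise Lipschitz structure of $\tau$ together with $\tau^* d\mu_2 = d\mu_1$ ensures the $H^{1/2}$ matching condition is compatible), set $u_f = \tilde u + v$ where $v \in H^1_0(M)$ is obtained by inverting $L^D$ on $-L\tilde u$.

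The core computation is Green's first identity applied on each Lipschitz piece $\Gamma_1^i$ and $\tau(\Gamma_1^i)$. Using $Lu_f=0$ and then pulling the integral over $\Gamma_2$ back to $\Gamma_1$ via $\tau^* d\mu_2 = d\mu_1$ gives
\[
	q(f,g) = \int_{\Gamma_1} g\left(\left.\tfrac{\p u_f}{\p\nu}\right|_{\Gamma_1} + \left.\tfrac{\p u_f}{\p\nu}\right|_{\Gamma_2}\!\!\circ\tau\right) d\mu_1 = \langle \Lambda_\tau f, g\rangle_{L^2(\Gamma_1)}
\]
whenever the right-hand side makes sense. Symmetry of the left-hand side in $f,g$ immediately gives symmetry of $\Lambda_\tau$ on $\cD(\Lambda_\tau)$, and identifies $\cD(\Lambda_\tau)$ as the natural operator domain associated to $q$.

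The lower bound and closedness follow by standard form-theory arguments once the estimate
\[
	q(f,f) \geq \|\nabla u_f\|_{L^2(M)}^2 - \|V\|_{L^\infty}\|u_f\|_{L^2(M)}^2
\]
is combined with a bound $\|u_f\|_{L^2(M)} \leq C\|f\|_{L^2(\Gamma_1)}$ for the Poisson-type solution operator (available in the Lipschitz setting via duality, using the results of \cite{M00} on the periodic Dirichlet problem with $0\notin\sigma(L^D)$). These estimates also show that $q(f,f)+ (C+1)\|f\|_{L^2(\Gamma_1)}^2$ is equivalent to $\|u_f\|_{H^1(M)}^2$, so $D(q)$ endowed with the form-graph norm is complete and embeds continuously into $H^{1/2}(\Gamma_1)$ through the trace of $u_f$.

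Compact resolvent then follows because the embedding $D(q) \hookrightarrow H^{1/2}(\Gamma_1) \hookrightarrow L^2(\Gamma_1)$ is compact: for any $\lambda$ below the form bound, the resolvent $(\Lambda_\tau - \lambda)^{-1}\colon L^2(\Gamma_1) \to D(q) \hookrightarrow L^2(\Gamma_1)$ factors through this compact inclusion. The main obstacle is the quantitative bound $\|u_f\|_{L^2(M)} \leq C\|f\|_{L^2(\Gamma_1)}$, which is non-trivial under the Lipschitz hypothesis on $\pM$ and $\tau$; handling the corners (where $\tau$ may fail to be continuous) requires decomposing $\Gamma_1$ as $\overline{\Gamma_1^1\cup\cdots\cup\Gamma_1^N}$ and assembling piecewise estimates, which is precisely why the statement of Theorem \ref{thmPeriodic} imposes that decomposition.
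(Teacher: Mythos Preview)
Your overall strategy---realize $\Lambda_\tau$ via the quadratic form $q(f,g) = \int_M(\langle\nabla u_f,\nabla u_g\rangle + V u_f u_g)$ and invoke the first representation theorem---is exactly the paper's approach, including the use of Green's identity together with $\tau^*d\mu_2 = d\mu_1$ to identify the associated operator with $\Lambda_\tau$. The difference lies only in how you obtain the form lower bound, and there your argument has a gap.

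You rely on an $L^2$-to-$L^2$ estimate for the Poisson operator, $\|u_f\|_{L^2(M)} \leq C\|f\|_{L^2(\Gamma_1)}$, and you correctly flag it as ``the main obstacle.'' Under the paper's standing hypotheses this bound is not available: the metric $g$ is only assumed $L^\infty$, and the relevant result in \cite{M00} (Theorem~4.25) requires $g$ to be Lipschitz---see Remark~\ref{addreg}, where exactly this distinction is made in a parallel situation. The paper sidesteps the issue by using instead the elementary interpolation inequality
\[
\|u\|_{L^2(M)}^2 \leq \epsilon\,\|u\|_{H^1(M)}^2 + c_3(\epsilon)\,\bigl\|\, u|_{\partial M}\bigr\|_{L^2}^2,
\]
valid for all $u \in H^1(M)$ by a standard compactness argument. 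Combined with the trace bound $\|f\|_{H^{1/2}(\Gamma_1)} \leq C\|u_f\|_{H^1(M)}$, this yields $q(f) \geq c_1'\|f\|_{H^{1/2}(\Gamma_1)} - c_2'\|f\|_{L^2(\Gamma_1)}$ directly, giving the lower bound, closedness on the form domain $H^{1/2}(\Gamma_1)$, and compact resolvent all at once, without any appeal to $L^2$ boundary-value theory. Replacing your Poisson estimate with this interpolation step closes the gap and brings your argument in line with the paper's.
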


\begin{proof}
Let $f \in H^{1/2}(\Gamma_1)$. By Theorems 3.23 and 3.40 of \cite{M00}, $\left.f\right|_{\Gamma_1^i} \in H^{1/2}_0(\Gamma^i_1)$ and $\left.f\right|_{\Gamma_1^i} \circ \tau^{-1} \in H^{1/2}_0(\Gamma^i_2)$ for each $i$. Therefore the function given by $f$ on $\Gamma_1$ and $f \circ \tau^{-1}$ on $\Gamma_2$ is contained in $H^{1/2}(\pM)$, so the boundary value problem
\[
	Lu = 0, \quad \left.u\right|_{\Gamma_1} = f, \quad \left.u\right|_{\Gamma_2} = f \circ \tau^{-1}
\]
has a unique solution $u \in H^1(M)$. Moreover, $\| u \|_{H^1(M)} \leq C \|f \|_{H^{1/2}(\pM)}$ for some $C>0$ that does not depend on $f$, and so
\begin{align}\label{Qdef}
	Q(f) = \int_M \left[ |\nabla u|^2 + Vu^2 \right]
\end{align}
defines a bounded quadratic form on $H^{1/2}(\pM)$. Since the trace map $H^1(M) \to H^{1/2}(\pM)$ is bounded and $V \in L^\infty(M)$, there are positive constants $c_1$ and $c_2$ such that
\[
	Q(f) \geq c_1 \|f\|_{H^{1/2}(\Gamma_1)} - c_2 \|u\|_{L^2(M)}.
\]
By a standard compactness argument, there exists for any $\epsilon > 0$ a constant $c_3 > 0$ such that
\[
	\|u\|_{L^2(M)}^2 \leq \epsilon \|u\|_{H^1(M)}^2 + c_3 \left\| \left.u\right|_{\pM} \right\|_{L^2(\Gamma_1)}^2
\]
for any $u \in H^1(M)$. It follows that
\[
	Q(f) \geq c_1' \|f\|_{H^{1/2}(\Gamma_1)} - c_2' \|f\|_{L^2(\Gamma_1)}
\]
for every $f \in H^{1/2}(\Gamma_1)$, so $Q$ is bounded over $H^{1/2}(\Gamma_1)$ and coercive over $L^2(\Gamma_1)$.

With $B$ denoting the bilinear form corresponding to $Q$, there exists a selfadjoint operator $T$, with domain $\cD(T) \subset H^{1/2}(\Gamma_1)$, such that $B(f,g) = \left<Tf,g\right>_{L^2(\Gamma_1)}$ for all $f \in \cD(T)$ and $g \in H^{1/2}(\Gamma_1)$. Integrating by parts and using the hypothesis $\tau^* d\mu_2 = d\mu_1$, we find that
\[
	B(f,g) = \int_{\Gamma_1} g \frac{\p u}{\p \nu} + \int_{\Gamma_2} (g \circ \tau^{-1}) \frac{\p u}{\p \nu} 
	=  \int_{\Gamma_1} g \left( \frac{\p u}{\p \nu} + \frac{\p u}{\p \nu} \circ \tau \right) 
\]
for any $g \in H^{1/2}(\Gamma_1)$, hence
\begin{align}\label{Ltdef1}
	T f = \left.\frac{\p u}{\p \nu}\right|_{\Gamma_1} + \left.\frac{\p u}{\p \nu} \right|_{\Gamma_2}\circ \tau.
\end{align}
Therefore $T = \Lambda_\tau$, as defined in \eqref{Ltdef}, and the proof is complete.
\end{proof}

We now sketch the proof of Theorem \ref{thmPeriodic}, which closely follows the proofs of Theorems \ref{MorMas}, \ref{DNcrossing} and \ref{thmFriedlander}.

\begin{proof}[Proof of Theorem \ref{thmPeriodic}]
We first define the symplectic Hilbert spaces $\cH_1 = H^{1/2}(\Gamma_1) \oplus H^{-1/2}(\Gamma_1)$, with the usual symplectic form $\omega_1$, then let $\cH_p = \cH_1 \oplus \cH_1$, with the form $\omega_p = \omega_1 \oplus (-\omega_1)$.

By $K^\lambda \subset H^1(M)$ we denote the space of weak solutions to $Lu = \lambda u$ (with no boundary conditions imposed). We define the space of Cauchy data
\[
	\mu(\lambda) = \left\{ \left( \left.\left(u,\frac{\p u}{\p \nu} \right) \right|_{\Gamma_1}, \left.\left(u,-\frac{\p u}{\p \nu} \right)\right|_{\Gamma_2} \circ \tau^{-1} \right) : u \in K^\lambda \right\}
\]
and the path of boundary conditions
\[
	\beta(t) = \{(x,t\phi,tx,\phi) : (x,\phi) \in \cH_1 \}
\]
as in \eqref{mudef} and \eqref{betadef}.

By a (now familiar) homotopy argument we find that
\begin{align}\label{MorP}
	\Mor(L^P) = \Mor(L^{DN}) + \Mas(\beta(t); \mu(0)),
\end{align}
where $L^{DN}$ denotes the ``mixed realization" of $L$ with Neumann conditions on $\Gamma_1$ and Dirichlet on $\Gamma_2$. Next, following the proof of Theorem \ref{DNcrossing}, we define the two-parameter family of Lagrangian subspaces
\[
	\beta(s,t) = \left\{ (x, t\phi + s \mathcal{J} x, tx, \phi) : (x, \phi) \in \cH_1 \right\}.	
\]
and consequently obtain
\begin{align}\label{MorPtau}
	\Mas(\beta(t); \mu(0)) = \Mor_0(\Lambda_\tau) - \Mor_0(\Lambda_1),
\end{align}
where $\Lambda_1$ is the ``partial Dirichlet-to-Neumann map" on $\Gamma_1$, obtain by mapping a function $f$ on $\Gamma_1$ to $\left.\frac{\p u}{\p \nu}\right|_{\Gamma_1}$, where $u$ uniquely solves the boundary value problem
\[
	Lu = 0, \quad \left.u\right|_{\Gamma_1} = f, \quad \left.u\right|_{\Gamma_2} = 0.
\]
Finally, the method of Theorem \ref{thmFriedlander} yields
\begin{align}\label{PFriedlander}
	\Mor(L^{DN}) = \Mor(L^D) + \Mor_0(\Lambda_1).
\end{align}
Combining \eqref{MorP}, \eqref{MorPtau} and \eqref{PFriedlander}, the result follows.
\end{proof}

\bibliographystyle{plain}
\bibliography{maslov}

\def\cprime{$'$}
\begin{thebibliography}{10}

\bibitem{arendt2012friedlander}
Wolfgang Arendt and Rafe Mazzeo.
\newblock Friedlander's eigenvalue inequalities and the
  {D}irichlet-to-{N}eumann semigroup.
\newblock {\em Commun. Pure Appl. Anal}, 11(6):2201--2212, 2012.

\bibitem{BR12}
Jussi Behrndt and Jonathan Rohleder.
\newblock An inverse problem of {C}alder\'on type with partial data.
\newblock {\em Comm. Partial Differential Equations}, 37(6):1141--1159, 2012.

\bibitem{BKS12}
Gregory Berkolaiko, Peter Kuchment, and Uzy Smilansky.
\newblock Critical partitions and nodal deficiency of billiard eigenfunctions.
\newblock {\em Geom. Funct. Anal.}, 22(6):1517--1540, 2012.

\bibitem{BF98}
Bernhelm Booss-Bavnbek and Kenro Furutani.
\newblock The {M}aslov index: a functional analytical definition and the
  spectral flow formula.
\newblock {\em Tokyo J. Math.}, 21(1):1--34, 1998.

\bibitem{BW93}
Bernhelm Boo{\ss}-Bavnbek and Krzysztof~P. Wojciechowski.
\newblock {\em Elliptic boundary problems for {D}irac operators}.
\newblock Mathematics: Theory \& Applications. Birkh\"auser Boston, Inc.,
  Boston, MA, 1993.

\bibitem{CLM96}
Sylvain~E. Cappell, Ronnie Lee, and Edward~Y. Miller.
\newblock Self-adjoint elliptic operators and manifold decompositions. {II}.
  {S}pectral flow and {M}aslov index.
\newblock {\em Comm. Pure Appl. Math.}, 49(9):869--909, 1996.

\bibitem{CF78}
I.~Chavel and E.~A. Feldman.
\newblock Spectra of domains in compact manifolds.
\newblock {\em J. Funct. Anal.}, 30(2):198--222, 1978.

\bibitem{CF81}
I.~Chavel and E.~A. Feldman.
\newblock Spectra of manifolds with small handles.
\newblock {\em Comment. Math. Helv.}, 56(1):83--102, 1981.

\bibitem{CH53}
R.~Courant and D.~Hilbert.
\newblock {\em Methods of mathematical physics. {V}ol. {I}}.
\newblock Interscience Publishers, Inc., New York, N.Y., 1953.

\bibitem{CJM14}
Graham Cox, Christopher K. R.~T. Jones, and Jeremy~L. Marzuola.
\newblock A {M}orse {I}ndex {T}heorem for {E}lliptic {O}perators on {B}ounded
  {D}omains.
\newblock {\em Comm. Partial Differential Equations}, 40(8):1467--1497, 2015.

\bibitem{CJLS14}
Graham Cox, Christopher~K.R.T Jones, Yuri Latushkin, and Alim Sukhtayev.
\newblock The {M}orse and {M}aslov indices for multidimensional
  {S}chr\"{o}dinger operators with matrix-valued potentials.
\newblock {\em preprint}, page arXiv:1408.1103v1, 2014.

\bibitem{DJ11}
Jian Deng and Christopher Jones.
\newblock Multi-dimensional {M}orse index theorems and a symplectic view of
  elliptic boundary value problems.
\newblock {\em Trans. Amer. Math. Soc.}, 363(3):1487--1508, 2011.

\bibitem{F91}
Leonid Friedlander.
\newblock Some inequalities between {D}irichlet and {N}eumann eigenvalues.
\newblock {\em Arch. Rational Mech. Anal.}, 116(2):153--160, 1991.

\bibitem{F04}
Kenro Furutani.
\newblock Fredholm-{L}agrangian-{G}rassmannian and the {M}aslov index.
\newblock {\em J. Geom. Phys.}, 51(3):269--331, 2004.

\bibitem{J89}
Shuichi Jimbo.
\newblock The singularly perturbed domain and the characterization for the
  eigenfunctions with {N}eumann boundary condition.
\newblock {\em J. Differential Equations}, 77(2):322--350, 1989.

\bibitem{LTU03}
Matti Lassas, Michael Taylor, and Gunther Uhlmann.
\newblock The {D}irichlet-to-{N}eumann map for complete {R}iemannian manifolds
  with boundary.
\newblock {\em Comm. Anal. Geom.}, 11(2):207--221, 2003.

\bibitem{LSS14}
Yuri Latushkin, Alim Sukhtayev, and Selim Sukhtaiev.
\newblock The {M}orse and {M}aslov indices for {S}chr\"{o}dinger operators.
\newblock {\em preprint}, page arXiv:1411.1656v2, 2014.

\bibitem{M91}
Rafe Mazzeo.
\newblock Remarks on a paper of {F}riedlander concerning inequalities between
  {N}eumann and {D}irichlet eigenvalues.
\newblock {\em Internat. Math. Res. Notices}, (4):41--48, 1991.

\bibitem{M00}
William McLean.
\newblock {\em Strongly elliptic systems and boundary integral equations}.
\newblock Cambridge University Press, Cambridge, 2000.

\bibitem{N95}
Liviu~I. Nicolaescu.
\newblock The {M}aslov index, the spectral flow, and decompositions of
  manifolds.
\newblock {\em Duke Math. J.}, 80(2):485--533, 1995.

\bibitem{P61}
Jaak Peetre.
\newblock Another approach to elliptic boundary problems.
\newblock {\em Comm. Pure Appl. Math.}, 14:711--731, 1961.

\bibitem{PWindex}
Alessandro Portaluri and Nils Waterstraat.
\newblock A {M}orse-{S}male index theorem for indefinite elliptic systems and
  bifurcation.
\newblock {\em J. Differential Equations}, 258(5):1715--1748, 2015.

\bibitem{RT75}
Jeffrey Rauch and Michael Taylor.
\newblock Potential and scattering theory on wildly perturbed domains.
\newblock {\em J. Funct. Anal.}, 18:27--59, 1975.

\bibitem{RS78}
Michael Reed and Barry Simon.
\newblock {\em Methods of modern mathematical physics. {IV}. {A}nalysis of
  operators}.
\newblock Academic Press [Harcourt Brace Jovanovich Publishers], New York,
  1978.

\bibitem{S68}
James Simons.
\newblock Minimal varieties in {R}iemannian manifolds.
\newblock {\em Ann. of Math. (2)}, 88:62--105, 1968.

\bibitem{S65}
S.~Smale.
\newblock On the {M}orse index theorem.
\newblock {\em J. Math. Mech.}, 14:1049--1055, 1965.

\bibitem{S78II}
R.~C. Swanson.
\newblock Fredholm intersection theory and elliptic boundary deformation
  problems. {II}.
\newblock {\em J. Differential Equations}, 28(2):202--219, 1978.

\bibitem{U73}
K.~Uhlenbeck.
\newblock The {M}orse index theorem in {H}ilbert space.
\newblock {\em J. Differential Geometry}, 8:555--564, 1973.

\bibitem{Y91}
Tomoyoshi Yoshida.
\newblock Floer homology and splittings of manifolds.
\newblock {\em Ann. of Math. (2)}, 134(2):277--323, 1991.

\end{thebibliography}

\end{document}